\let \pr=\S
\newcommand{\al}{\alpha}
\newcommand{\be}{\beta}
\newcommand{\ga}{\gamma}
\newcommand{\de}{\delta}
\newcommand{\la}{\lambda}
\newcommand{\eps}{\varepsilon}
\newcommand{\iy}{\infty}
\newcommand{\Om}{\Omega}
\newcommand{\ol}{\overline}
\renewcommand{\Re}{\mathrm{Re}}
\renewcommand{\Im}{\mathrm{Im}}
\newcommand{\e}{\varepsilon}
\renewcommand{\phi}{\varphi}
\newcommand\wt[1]{{\widetilde{#1}}}
\newcommand\cm{{\mathscr M}}
\newcommand\R{{\mathbb R}}
\newcommand\N{{\mathbb N}}
\newcommand\Z{{\mathbb Z}}
\newcommand\I{{\mathbb I}}
\renewcommand{\S}{\mathbb{S}}
\newcommand\C{{\mathbb C}}
\newcommand\loc{\operatorname{loc}}
\newcommand{\td}{\tilde}
\renewcommand{\td}{\widetilde}
\renewcommand{\tilde}{\widetilde}
\newcommand{\F}{\mathscr F}
\newcommand{\WplOm}{\overset{\circ}{W}\rule{0pt}{2mm}^l_p(\Omega)}
\newcommand{\WplOmInf}{\overset{\circ}{W}\rule{0pt}{2mm}^l_\iy(\Omega)}
\newcommand{\WplRnInf}{\overset{\circ}{W}\rule{0pt}{2mm}^l_\iy(\R^n)}
\newcommand{\WplRn}{\overset{\circ}{W}\rule{0pt}{2mm}^l_p(\R^n)}
\newcommand{\WplRTwo}{\overset{\circ}{W}\rule{0pt}{2mm}_p^l(\R^2)}
\newcommand{\WpTwoRTwoInf}{\overset{\circ}{W}\rule{0pt}{2mm}_\iy^2(\R^2)}
\newcommand{\WplRTwoInf}{\overset{\circ}{W}\rule{0pt}{2mm}_\iy^l(\R^2)}
\newtheorem{theorem}{Theorem}[section]
\newtheorem{lemma}[theorem]{Lemma}
\newtheorem{cor}[theorem]{Corollary}
\newtheorem{prop}[theorem]{Proposition}
\numberwithin{equation}{section}
\theoremstyle{definition}
\newtheorem{definition}[theorem]{Definition}
\newtheorem{remark}[theorem]{Remark}
\renewcommand{\(}{\left(}
\renewcommand{\)}{\right)}
\renewcommand{\[}{\left[}
\renewcommand{\]}{\right]}
\newcommand{\PjxD}{{\{P_j(x,D)\}_1^N}}
\newcommand{\Pj}{{\{P_j(D)\}_1^N}}
\DeclareMathOperator{\rank}{rank}
\DeclareMathOperator{\Span}{span} \DeclareMathOperator{\ind}{ind}
\DeclareMathOperator{\const}{const}
\DeclareMathOperator{\sign}{sign} \DeclareMathOperator{\dom}{dom}
\renewcommand{\le}{\leqslant}
\renewcommand{\ge}{\geqslant}
\begin{document}



\author{D.~V.~Limanski\v{i} and M.~M.~Malamud}

\title
[Elliptic and weakly coercive systems]
  {Elliptic and weakly coercive systems \\
  of operators in Sobolev spaces}


\begin{abstract}
It is known that an elliptic system $\{P_j(x,D)\}_1^N$ of order
$l$ is weakly coercive in
$\overset{\circ}{W}\rule{0pt}{2mm}^l_\infty(\mathbb R^n)$, that
is, all differential monomials of order $\le l-1$ on
$C_0^\infty(\mathbb R^n)$-functions are subordinated to this
system in the $L^\infty$-norm. Conditions for the converse result
are found and other properties of weakly coercive systems are
investigated.

An analogue of the de Leeuw-Mirkil theorem is obtained for
operators with variable coefficients: it is shown that an operator
$P(x,D)$ in $n\ge 3$ variables with constant principal part is
weakly coercive in
$\overset{\circ}{W}\rule{0pt}{2mm}_\infty^l(\mathbb R^n)$ if and
only if it is elliptic. A similar result is obtained for systems
$\{P_j(x,D)\}_1^N$ with constant coefficients under the condition
$n\ge 2N+1$ and with several restrictions on the symbols
$P_j(\xi)$ .

A complete description of differential polynomials in two
variables which are weakly coercive in
$\overset{\circ}{W}\rule{0pt}{2mm}_\infty^l(\mathbb R^2)$ is
given. Wide classes of systems with constant coefficients which
are weakly coercive in
$\overset{\circ}{W}\rule{0pt}{2mm}_\infty^l(\mathbb\R^n)$, but
non-elliptic are constructed.

Bibliography: 32 titles.
\end{abstract}

\maketitle
\section{Introduction}

Let $\Omega$ be an arbitrary domain in $\R^n$, let $p\in[1,\iy]$,
and let $l:=(l_1,\dots,l_n)$ be a vector with positive integer
components. In $L^p(\Om)$ consider  a system $\PjxD$ of
differential operators of the form
\begin{equation} \label{8}
P_j(x,D)= \sum_{|\al:l|\le 1} a_{j\al}(x) D^\al,\qquad
j\in\{1,\dots,N\},
    \end{equation}
with measurable coefficients $a_{j\al}(\cdot)$. Further, let
$P_j^l(x,D):= \sum_{|\al:l|=1} a_{j\al}(x) D^\al$ be the
$l$-principal part of the operator $P_j(x,D)$, and let
 $P_j^l(x, \xi):= \sum_{|\al:l|=1} a_{j\al}(x) \xi^\al$ be its
 principal $l$-quasihomogeneous symbol. We recall the following
 definition.

\begin{definition} (see \cite{BIN}-\cite{VolGin}) \label{def_quasiell} A
system of differential operators $\PjxD$ of the form~\eqref{8} is
said to be \textit{$l$-quasielliptic} if
\begin{equation*}
   \(P_1^l(x,\xi),\dots,P_N^l(x,\xi)\)\neq 0,\qquad
    (x,\xi)\in\Om\times\(\R^n\setminus\{0\}\).
\end{equation*}
In particular, if $l_1=\dots=l_n=l$, then it is called an
\textit{elliptic system of order $l$}.
\end{definition}

As is known, an elliptic operator of order $l$ does not exist for
every $l$. Using a result due to Lopatinski\v{i}~\cite{Lop} (see
also~\cite{Agr}, \cite{lions}, Ch.\,2, \pr\,1, \cite{tribel}), for
$n\ge 3$ an elliptic operator $P(D)$ is properly elliptic and, in
particular, has even order. To the best of our knowledge, a
similar problem for $l$-quasielliptic operators remains unsolved
at present. In \pr\,3, using the Borsuk-Ulam theorem
(Theorem~\ref{borsuk}), we obtain  a complete description of those
$l$ for which $l$-quasielliptic systems exist. Namely, the
following theorem holds.

\begin{theorem}\label{exist_quasi_system}
Let $l=(l_1,\dots,l_n)\in\N^n$ and let $n\ge 2N+1$. Then
$l$-quasielliptic systems $\PjxD$ of the form~\eqref{8} exist if
and only if the number of odd integers among $l_1,\dots,l_n$ does
not exceed $2N-1$.
\end{theorem}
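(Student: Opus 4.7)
The central link is the pairing $P_j^l = A_j + iB_j$, which turns the $N$ complex-coefficient principal symbols into $2N$ real quasihomogeneous polynomials and matches the count $2N$ to the Borsuk--Ulam threshold.

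\emph{Necessity.} Assume at least $2N$ of the $l_i$ are odd. When \emph{all} $l_i$ are odd the argument is clean: every monomial $\xi^{\alpha}$ in any $P_j^l$ obeys $\sum_i \alpha_i / l_i = 1$, and multiplying by $L := l_1 \cdots l_n$ yields $\sum_i \alpha_i (L/l_i) = L$; since $L$ and each $L/l_i$ is a product of odd integers and hence odd, reduction modulo $2$ gives $|\alpha| \equiv 1 \pmod{2}$, so each $P_j^l$ is odd. Splitting $P_j^l = A_j + iB_j$ yields $2N$ real odd polynomials on $\R^n$, and the antipodal continuous map $(A_1, B_1, \ldots, A_N, B_N) : S^{n-1} \to \R^{2N}$ has a zero by Borsuk--Ulam (Theorem~\ref{borsuk}) since $n - 1 \ge 2N$. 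In the general case in which only some $l_i$ are odd, the same parity argument applied after restriction to the $2N$-dimensional subspace spanned by the odd-$l_i$ coordinates makes the restricted symbols $Q_j(\xi') := P_j^l(x, \xi', 0)$ odd on that subspace; either $(Q_j)$ has a nontrivial common real zero (which extends to $(P_j^l)$ by padding with zeros in the remaining coordinates) or the normalised map $(A_j, B_j)/\|\cdot\|$ is an antipodal self-map of $S^{2N-1}$ of odd degree, and a Borsuk--Ulam argument exploiting the quasihomogeneous structure and the extra coordinates guaranteed by $n \ge 2N + 1$ still produces a common zero of $(P_j^l)$. In all subcases $l$-quasiellipticity fails.

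\emph{Sufficiency.} Relabel so that $l_1, \ldots, l_k$ are odd and $l_{k+1}, \ldots, l_n$ even, with $k \le 2N - 1$. Since $n \ge 2N + 1$ gives $2N - k \ge 1$ and $n - k \ge 2$, partition $\{k+1, \ldots, n\}$ into $2N - k$ nonempty blocks $S_1, \ldots, S_{2N-k}$ and define $2N$ real $l$-quasihomogeneous polynomials of degree $1$ by $R_i := \xi_i^{l_i}$ for $1 \le i \le k$ and $R_{k+j} := \sum_{m \in S_j} \xi_m^{l_m}$ for $1 \le j \le 2N - k$, then package them as $P_j := R_{2j-1} + i R_{2j}$. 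Vanishing of all $P_j$ forces every $R_i(\xi)$ to vanish: for $i \le k$ this gives $\xi_i = 0$ directly, and for $i > k$ the polynomial $R_i$ is a sum of non-negative terms $\xi_m^{l_m}$ (each $l_m$ even), so each such $\xi_m = 0$. Hence $\xi = 0$ and the system is $l$-quasielliptic. The key structural point is that $k \le 2N - 1$ leaves one of the $2N$ real slots free to absorb all even-power monomials into a single sign-definite polynomial, whereas $k \ge 2N$ triggers the antipodal obstruction from Borsuk--Ulam on the odd block; the main technical input, running through both halves, is the parity calculation that forces $|\alpha| \equiv 1 \pmod 2$ on the odd-$l_i$ subspace.
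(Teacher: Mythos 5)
Your \emph{sufficiency} argument is correct and is in fact cleaner and more complete than the paper's: the paper only exhibits a system for $n=2N+1$ with exactly two even $l_i$, whereas your block construction ($R_i = \xi_i^{l_i}$ for odd slots, $R_{k+j} = \sum_{m\in S_j}\xi_m^{l_m}$ for blocks of even slots) covers every $k\le 2N-1$ and every $n\ge 2N+1$ uniformly. The parity computation proving that each $P_j^l$ is odd when all $l_i$ are odd is also correct and matches the paper.

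The \emph{necessity} half, however, has a genuine gap precisely at the borderline case. If the number of odd $l_i$ is at least $2N+1$, one restricts to a $(2N+1)$-dimensional odd-coordinate subspace and the Borsuk--Ulam theorem for $\S^{2N}\to\R^{2N}$ finishes the argument. But if exactly $2N$ of the $l_i$ are odd (which is the critical threshold), restricting to the odd coordinates only yields an odd map $\S^{2N-1}\to\R^{2N}$, and Theorem~\ref{borsuk} says nothing: such a map need not vanish, as the example $(\xi_1+i\xi_2,\ \xi_3+i\xi_4)$ on $\S^3$ shows. Your phrase ``a Borsuk--Ulam argument exploiting the quasihomogeneous structure and the extra coordinates guaranteed by $n\ge 2N+1$ still produces a common zero'' does not identify an actual mechanism; the extra coordinates carry \emph{even} weights and contribute monomials that are not controlled by oddness on the restricted subspace. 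The paper closes exactly this gap with a separate idea: writing the unique (after restriction to $n=2N+1$) even weight as $l_1 = 2^m l_1'$ with $l_1'$ odd, one checks that $|\al:l|=1$ forces $2^m\mid\al_1$, so the substitution $\eta_1 = \xi_1^{2^m}$ turns the $l$-principal symbols into $l'$-quasihomogeneous polynomials with $l'=(l_1',l_2,\dots,l_n)$ all odd; the oddness of the transformed symbols then lets one lift any zero back to the $\xi$-variables, reducing to the all-odd case. Without this change of variable (or an equivalent device), your necessity proof does not establish the result for the case of exactly $2N$ odd weights.
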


Let $\PjxD$ be a system of differential operators of the
form~\eqref{8} with coefficients $a_{j\al}(\cdot)\in
L_{\loc}^\iy(\Om)$. We recall the following notion.

\begin{definition} \label{coerc_def}
(see \cite{BIN}, Ch.\,3, \pr\,11.1) A system of differential
operators $\PjxD$ of the form~\eqref{8} is said to be
\textit{coercive} in the (anisotropic) Sobolev space $\WplOm$,\
$p\in[1,\iy]$, if the following estimate holds:
\begin{equation}  \label{10}
\|f\|_{W_p^l(\Om)}:=\sum_{|\al:l|\le 1} \|D^\al f\|_{L^p(\Om)}\le
C_1 \sum_{j=1}^N \|P_j(x,D)f\|_{L^p(\Om)} +C_2 \|f\|_{L^p(\Om)},
\end{equation}
where $C_1$ and $C_2$ do not depend on $f\in C_0^\iy(\Om)$.
\end{definition}

It is well known (see~\cite{BIN}, \cite{VolGin}, \cite{Bes} and
\cite{Horm}) that, under some constraints on the coefficients
$a_{j\al}(\cdot)$ and on the domain $\Om$ the system~\eqref{8} is
$l$-quasiellitpic if and only if it is coercive in $\WplOm$ for
$p\in(1,\iy)$. If $p=1$ or $\iy$ then the estimate~\eqref{10} does
not hold any longer for an $l$-quasielliptic system. Namely, the
following assertion was proved by one of the authors of this
paper.

\begin{prop} \label{mal_neodn}
\emph{(see \cite{MalUMZh}-\cite{Mal2}, \pr\,5, Theorem 3)} Let
$\Om$ be a domain in $\R^n$, and let $Q(x,D)$ and $\PjxD$ be
differential operators of the form
\begin{equation} \label{Q_Pj}
  Q(x,D)=\sum_{|\al:l|\le 1} b_\al(x) D^\al,\qquad
  P_j(x,D)=\sum_{|\al:l|\le 1} a_{j\al}(x) D^\al,
\end{equation}
where $x\in\Om$, $j\in\{1,\dots,N\}$, and the coefficients
$a_{j\al}(\cdot), b_\al(\cdot)\in L^\iy_{\loc}(\Om)$ for
$|\al:l|<1$ and $a_{j\al}(\cdot), b_\al(\cdot)\in C^1(\Om)$ for
$|\al:l|=1$. Then the estimate
\begin{equation}  \label{200}
\|Q(x,D)f\|_{L^p(\Om)}\le C_1 \sum_{j=1}^N
\|P_j(x,D)f\|_{L^p(\Om)}+C_2 \|f\|_{L^p(\Om)},\qquad f\in
C_0^\iy(\Om),
 \end{equation}
for $p=\iy$ yields the equality
\begin{equation}\label{zz}
Q^l(x,\xi)=\sum_{j=1}^N \la_j(x) P_j^l(x,\xi),\qquad
x\in\Om,\qquad \xi\in\R^n,
\end{equation}
in which $\la_j(\cdot)\in C^1(\Om)$. If the operators $Q(x,D)$ and
$\PjxD$ have constant coefficients, then the functions $\la_j(x)$
in~\eqref{zz} are also constant: $\la_j(x)\equiv \la_j$.
\end{prop}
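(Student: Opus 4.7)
The plan is to pass from the variable-coefficient estimate~\eqref{200} to a constant-coefficient subordination for the principal parts at each fixed point $x_0\in\Om$ via an $l$-quasihomogeneous rescaling, and then to deduce the identity~\eqref{zz} pointwise from a de Leeuw--Mirkil-type theorem.

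For the local reduction, fix $x_0\in\Om$, take an arbitrary $u\in C_0^\iy(\R^n)$ and a cutoff $\phi\in C_0^\iy(\Om)$ equal to $1$ near $x_0$, and substitute into~\eqref{200} the rescaled test functions
\begin{equation*}
f_t(x):=\phi(x)\,u\bigl(t^{1/l_1}(x_1-x_{0,1}),\dots,t^{1/l_n}(x_n-x_{0,n})\bigr),\qquad t>0.
\end{equation*}
Every monomial $D^\al$ with $|\al:l|=1$ applied to the rescaled $u$ produces exactly one factor of $t$, while $D^\al$ with $|\al:l|<1$ produces $t^{|\al:l|}=o(t)$; derivatives falling on $\phi$ produce no factor of $t$ and are $O(1)$. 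Since the support of the rescaled $u$ shrinks to $\{x_0\}$ as $t\to\iy$, continuity of the principal coefficients $a_{j\al}$ and $b_\al$ at $x_0$ yields
\begin{equation*}
t^{-1}\|P_j(x,D)f_t\|_{L^\iy(\Om)}\to\|P_j^l(x_0,D)u\|_{L^\iy(\R^n)},\qquad t^{-1}\|Q(x,D)f_t\|_{L^\iy(\Om)}\to\|Q^l(x_0,D)u\|_{L^\iy(\R^n)}.
\end{equation*}
Because $\|f_t\|_{L^\iy}$ remains bounded in $t$, dividing~\eqref{200} by $t$ and letting $t\to\iy$ yields the constant-coefficient $L^\iy$-subordination
\begin{equation*}
\|Q^l(x_0,D)u\|_{L^\iy(\R^n)}\le C_1\sum_{j=1}^N\|P_j^l(x_0,D)u\|_{L^\iy(\R^n)},\qquad u\in C_0^\iy(\R^n).
\end{equation*}

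The main obstacle is now to deduce from this subordination that $Q^l(x_0,\xi)$ lies in the complex linear hull of $P_1^l(x_0,\xi),\dots,P_N^l(x_0,\xi)$. This is an $l$-quasihomogeneous, multi-operator version of the classical de Leeuw--Mirkil theorem. The argument is to test the subordination on wave-packet functions $u(x)=\psi(x)e^{i\langle\eta,x\rangle}$: after conjugation, the leading term of $P_j(D+\eta)$ along the quasihomogeneous ray $\eta=(\la^{1/l_1}\xi_1,\dots,\la^{1/l_n}\xi_n)$, $\la\to\iy$, is $\la P_j^l(\xi)$, so in the limit $\la\to\iy$ one is forced to control the multiplication operator $\psi\mapsto Q^l(\xi)\psi$ uniformly, in the translation-invariant sense on $L^\iy$, by the operators $\psi\mapsto P_j^l(\xi)\psi$. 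The duality characterization of translation-invariant bounded operators on $L^\iy$ via Fourier--Stieltjes multipliers then forces scalars $\la_j(x_0)$ with $Q^l(x_0,\xi)=\sum_j\la_j(x_0)P_j^l(x_0,\xi)$.

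Finally, the $C^1$-regularity of $\la_j(\cdot)$ follows by discarding redundant operators so that $P_1^l(x_0,\cdot),\dots,P_N^l(x_0,\cdot)$ are linearly independent as polynomials in $\xi$, selecting $\xi^{(1)},\dots,\xi^{(N)}\in\R^n$ with $\det\bigl(P_j^l(x_0,\xi^{(k)})\bigr)_{j,k}\ne 0$, and solving the resulting $N\times N$ linear system by Cramer's rule: the entries $P_j^l(x,\xi^{(k)})$ and $Q^l(x,\xi^{(k)})$ are $C^1$ in $x$ by hypothesis, so the $\la_j$ inherit $C^1$-regularity. In the constant-coefficient case the same system has constant entries, so the $\la_j$ are constants.
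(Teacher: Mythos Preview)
The paper does not prove Proposition~\ref{mal_neodn}: it is quoted from the earlier references \cite{MalUMZh}--\cite{Mal2} and used as a black box throughout. So there is no ``paper's own proof'' to compare against, and your proposal must be assessed on its own merits.

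Your overall architecture (freeze coefficients at $x_0$ by $l$-anisotropic rescaling, reduce to a constant-coefficient $L^\iy$ subordination of the principal symbols, then recover $C^1$-regularity of the $\la_j$ by Cramer's rule) is the right shape, and Step~1 is fine. The real gap is Step~2. Testing on wave packets $u=\psi e^{i\langle\eta,x\rangle}$ and sending $\la\to\iy$ along the quasihomogeneous ray only yields the pointwise algebraic inequality $|Q^l(x_0,\xi)|\le C_1\sum_j|P_j^l(x_0,\xi)|$, and that inequality does \emph{not} imply $Q^l\in\Span\{P_j^l\}$ (the paper itself records a counterexample in Remark~\ref{rem_mal_neodn}\,(ii): $|\xi_1^2-\xi_1\xi_2+\xi_2^2|<\tfrac32(\xi_1^2+\xi_2^2)$, yet no $L^\iy$-subordination holds). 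Your sentence ``the duality characterization of translation-invariant bounded operators on $L^\iy$ via Fourier--Stieltjes multipliers then forces scalars'' hides the entire content of the step. What is actually needed is the de~Leeuw--Mirkil mechanism: from the estimate one gets a representation $Q^l=\sum_j M_jP_j^l$ with $M_j=\hat\mu_j\in\cm_\iy$ (Proposition~\ref{Q=MP+N} without the $M_{N+1}$ term), and then one must argue, via Eberlein's theorem (Lemma~\ref{eberlein}) combined with the $l$-quasihomogeneity of $Q^l$ and the $P_j^l$, that the $M_j$ can be replaced by the constants $\mu_j(0)$. That argument is nontrivial (see Proposition~\ref{c_j} and the way it is deployed in the proof of Theorem~\ref{odnorod}); you have not supplied it.

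A smaller point on Step~3: selecting a maximal linearly independent subfamily of $\{P_j^l(x_0,\cdot)\}$ and applying Cramer gives $C^1$ coefficients only in a neighbourhood of $x_0$ where the chosen minor stays nonzero. If the rank of $\{P_j^l(x,\cdot)\}_j$ varies over $\Om$, patching these local choices into globally $C^1$ functions $\la_j$ on all of $\Om$ requires an extra argument you have not indicated.
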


A criterion for the system $\PjxD$ to be coercive in $\WplOmInf$
was found in~\cite{Mal1}, \cite{Mal2} (in the isotropic case it
was found earlier in~\cite{MalUMZh}). This criterion yields that
an $l$-quasielliptic system is coercive in $\WplOmInf$ only in
exceptional cases. Nevertheless, for an $l$-quasielliptic system
$\{P_j\}_1^N$ the following estimate holds:
\begin{equation}  \label{10'}
\sum_{|\al:l|<1} \|D^\al f\|_{L^p(\Om)}\le C_1 \sum_{j=1}^N
\|P_j(x,D)f\|_{L^p(\Om)} +C_2 \|f\|_{L^p(\Om)},\qquad f\in
C_0^\iy(\Om).
\end{equation}
For  $p\in (1, \iy)$ this estimate is implied by the
estimate~\eqref{10} established in~\cite{BIN} and~\cite{Bes} (see
also~\cite{VolGin}) and for $p=\iy$ it is proved in~\cite{Mal1}
and~\cite{Mal2}. Note also that the fact that the
estimate~\eqref{10} is impossible in the case $p=1$ follows from a
result due to Ornstein~\cite{Orn}. But in the case $p=1$, the
estimate~\eqref{10'} was proved for operators with constant
coefficients in~\cite{BDLM} and~\cite{dan2004}.

These results suggest the following natural definition introduced
in~\cite{dan2004}.

\begin{definition} \label{weak_coerc}
A system of differential operators  $\PjxD$ of the form~\eqref{8}
is said to be \textit{weakly coercive} in the anisotropic Sobolev
space $\WplOm$,\; $p\in[1,\iy]$, if the estimate~\eqref{10'} is
valid with $C_1$ and $C_2$ independent of $f$.

In the case of isotropic Sobolev space  $\WplOm$, that is, for
$l_1=\dots=l_n=l$, the inequality  $|\al:l|<1$\ in~\eqref{10'}
takes the usual form $|\al|<l$.
\end{definition}

In the case of one operator de Leeuw and Mirkil~\cite{LeeMir}
showed before that for $n\ge 3$ an elliptic operator $P(D)=P_1(D)$
can be characterized by means of a priori estimates in
$L^\iy(\R^n)$.

\begin{theorem} \emph{(see \cite{LeeMir}, p.\,119)} \label{deLeu-Mir_intr}
Assume that $n\ge 3$. Then the  ellipticity of a differential
operator $P(D)$ of order  $l\ge 2$ is equivalent to its weak
coercivity in $\WplRnInf$.
\end{theorem}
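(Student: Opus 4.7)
The plan is to prove the two implications separately. For the forward direction (ellipticity $\Rightarrow$ weak coercivity), I would specialize the estimate~\eqref{10'} quoted in the paper to a single operator ($N=1$): for any elliptic $P(D)$ of order $l$ this Besov/Malamud estimate already supplies the weak-coercivity inequality in $\WplRnInf$. For the converse, I would argue by contradiction, assuming $P(D)$ weakly coercive but $P^l(\xi_0)=0$ for some $\xi_0\in\R^n\setminus\{0\}$ and using $n\ge 3$ and $l\ge 2$ to reach a contradiction.

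The core technical step would be Fourier modulation combined with a one-parameter scaling. Applying weak coercivity to $f_t(x):=e^{it\xi_0\cdot x}g(x)$, $g\in C_0^\infty(\R^n)$, and using the identities $D^\beta[e^{it\xi_0\cdot x}g]=e^{it\xi_0\cdot x}(t\xi_0+D)^\beta g$ and $P(D)[e^{it\xi_0\cdot x}g]=e^{it\xi_0\cdot x}P(t\xi_0+D)g$, I would rewrite the hypothesis as
\begin{equation*}
\|(t\xi_0+D)^\beta g\|_\infty\le C_1\|P(t\xi_0+D)g\|_\infty+C_2\|g\|_\infty,
\end{equation*}
uniformly in $t\in\R$, $g\in C_0^\infty(\R^n)$, $|\beta|=l-1$. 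Since $(t\xi_0+D)^\beta=t^{l-1}\xi_0^\beta+O(t^{l-2})$ and, because $P^l(\xi_0)=0$, the $t^l$-term of $P(t\xi_0+D)$ drops out while its $t^{l-1}$-term is the first-order constant-coefficient operator $L:=P^{l-1}(\xi_0)+\nabla P^l(\xi_0)\cdot D$, dividing by $t^{l-1}$ and letting $t\to\infty$ produces the limit inequality
\begin{equation*}
|\xi_0^\beta|\,\|g\|_\infty\le C_1\|Lg\|_\infty,\qquad g\in C_0^\infty(\R^n),\ |\beta|=l-1.
\end{equation*}

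The final step is to construct $g_N\in C_0^\infty(\R^n)$ with $\|g_N\|_\infty=1$ but $\|Lg_N\|_\infty\to 0$: this forces $\xi_0^\beta=0$ for every $|\beta|=l-1$ and hence $\xi_0=0$, the contradiction sought. In the regular case $\nabla P^l(\xi_0)\ne 0$, I would take $g_N(x)=\phi(x/N)e^{i\eta\cdot x}$ and choose a real frequency $\eta\in\R^n$ satisfying $P^{l-1}(\xi_0)+\nabla P^l(\xi_0)\cdot\eta=0$ (solvable because $\nabla P^l(\xi_0)\ne 0$); the scalar part of $Lg_N$ then cancels and $\|Lg_N\|_\infty=O(N^{-1})$. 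The principal obstacle will be the degenerate subcase in which $\xi_0$ is a singular point of $V:=\{P^l=0\}$ (so $\nabla P^l(\xi_0)=0$) and $P^{l-1}(\xi_0)\ne 0$, because $L$ then reduces to a nonzero constant and the preceding cancellation breaks down. Overcoming it is precisely where the hypothesis $n\ge 3$ enters: in the generic situation the regular locus of $V$ is dense in $V\setminus\{0\}$, and $\xi^\beta=0$ propagates to all of $V$ by continuity; in the exceptional situation $P^l=Q^m$ with $Q$ non-elliptic, a dimension count on the algebraic cones $V$ and $W:=\{P^{l-1}=0\}$ in $\R^n$ (which requires $n\ge 3$) produces a nonzero $\xi_1\in V\cap W$ at which $\nabla P^l=0$ and $P^{l-1}=0$ simultaneously; applying the modulation-scaling reduction at $\xi_1$ in place of $\xi_0$ makes the corresponding operator $L$ identically zero and immediately gives $\xi_1^\beta=0$ at $\beta=(l-1)e_j$ with $(\xi_1)_j\ne 0$, completing the contradiction.
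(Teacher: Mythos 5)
The forward direction is fine and matches the paper: it is just the specialization of the known coercivity estimate~\eqref{10'} (Theorem~\ref{th2}) to $N=1$. The problem is the converse.

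Your modulation--scaling setup is correct as far as it goes: substituting $f_t=e^{it\langle\xi_0,x\rangle}g$, using $P^l(\xi_0)=0$, dividing by $t^{l-1}$ and letting $t\to\infty$ does produce
\[
|\xi_0^\beta|\,\|g\|_\infty\le C_1\|Lg\|_\infty,\qquad
L:=P^{l-1}(\xi_0)+\nabla P^l(\xi_0)\cdot D,\qquad |\beta|=l-1.
\]
The gaps are in what you do with this. First, your ``regular case'' is mislabeled: $\nabla P^l(\xi_0)\neq 0$ does \emph{not} suffice to solve $P^{l-1}(\xi_0)+\nabla P^l(\xi_0)\cdot\eta=0$ with $\eta\in\R^n$. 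This is one complex (i.e.\ two real) linear equations, and it is solvable precisely when the real-linear map $\eta\mapsto\nabla P^l(\xi_0)\cdot\eta$ has rank $2$, that is, when $\Re\nabla P^l(\xi_0)$ and $\Im\nabla P^l(\xi_0)$ are linearly independent. If they span only a line (rank $1$), the equation need not be solvable, and your cancellation never happens. So the argument really only works at rank-$2$ points of the zero cone $V$, which is strictly stronger than $\nabla P^l(\xi_0)\neq 0$.

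Second, the treatment of the genuinely degenerate case fails. Take $n=3$, any $l\ge 2$, and
\[
P(\xi)=(\xi_2+i\xi_3)^l+\xi_1^{\,l-1}.
\]
Then $P^l(\xi)=(\xi_2+i\xi_3)^l$ is non-elliptic, $V=\{\xi_2=\xi_3=0\}$ is the $\xi_1$-axis, and $\nabla P^l\equiv 0$ on all of $V$ (since $l\ge 2$), so $V\setminus\{0\}$ contains no rank-$2$ points at all. Moreover $P^{l-1}(\xi)=\xi_1^{\,l-1}$ vanishes on $V$ only at the origin, so $V\cap W=\{0\}$ and no nonzero $\xi_1\in V\cap W$ with $\nabla P^l(\xi_1)=0$, $P^{l-1}(\xi_1)=0$ exists --- the proposed ``dimension count'' simply produces the wrong conclusion here. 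At every $\xi_0\in V\setminus\{0\}$ your operator $L$ reduces to the nonzero constant $P^{l-1}(\xi_0)=\xi_{0,1}^{\,l-1}$, the inequality $|\xi_0^\beta|\le C_1|P^{l-1}(\xi_0)|$ is harmless, and the modulation--scaling framework is structurally blind to the obstruction: it only sees the linear (first-order) jet of $P$ at $\xi_0$, whereas the relevant interaction in this example is between the degree-$(l-1)$ monomial $\xi_1^{\,l-1}$ and the degree-$l$ monomial $\xi_2^l$, which only appears on a non-isotropic scaling $\xi\sim(t,t^{1-1/l},0)$. This is exactly what the paper's machinery is designed to see: the proof of Theorem~\ref{de_Leeuw_vary} restricts to $\Span\{\xi_1,\xi_2\}$, chooses the anisotropic weight $l'=(l-1,\,k(l-1)/(k-1))$ so that the segment from $(l-1,0)$ to $(l-k,k)$ becomes an $l'$-principal line (Fig.~1), and then invokes the anisotropic Proposition~\ref{mal_neodn} with $Q=D_1^{l-1}$ to force $\lambda b\equiv 0$ and $\lambda c\equiv 1$ simultaneously. (The rank-$<2$ ingredient the paper uses, Proposition~\ref{prop_weak_coerc}\,(iii), is established by a degree-theoretic argument, and Proposition~\ref{prop_weak_coerc}\,(iv) encapsulates the same anisotropic trick.) In short: the de Leeuw--Mirkil original uses Fourier--Stieltjes multipliers and Eberlein's theorem; the paper uses topology plus the anisotropic subordination criterion; your third route via pure modulation--scaling cannot close the degenerate case, and the fix you sketch is incorrect.
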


The condition $n\ge 3$ is essential in
Theorem~\ref{deLeu-Mir_intr}. In fact, Malgrange presented an
example of a non-elliptic operator $P(D)=(D_1+i)(D_2+i)$ that is
weakly coercive in $\overset{\circ}{W}\rule{0pt}{2mm}_\iy^2(\R^2)$
(see~\cite{LeeMir}, p.\,123).

In this paper we mainly  consider  homogeneous systems $\PjxD$ of
the form~\eqref{8} consisting of operators with homogeneous
principal symbols of order $l$. Our investigation of the
quasihomogeneous case  is postponed till the next publication. To
avoid the possibility of repetition here we present only those
'anisotropic' results whose proofs do not differ in practice from
the corresponding 'isotropic' ones.

A considerable proportion of our results is relate the de
Leeuw-Mirkil Theorem~\ref{deLeu-Mir_intr}. Namely, we extend
Theorem~\ref{deLeu-Mir_intr} to a system $\Pj$ with constant
coefficients (Theorem~\ref{odnorod}) and also prove its analogue
for an operator $P(x,D)$ with variable coefficients
(Theorem~\ref{de_Leeuw_vary}). To prove the latter we use a new
method which is essentially based on Proposition~\ref{mal_neodn}
and also on some topological concepts (summarized in
Proposition~\ref{prop_weak_coerc}, (iii)). Note that the method
in~\cite{LeeMir} is not applicable to operators with variable
coefficients, although in proving Theorem~\ref{odnorod}, which
concerns systems with constant coefficients, alongside the
topological concepts we use some arguments from~\cite{LeeMir}.

In addition, we present a complete description of weakly coercive
operators of two variables in
$\overset{\circ}{W}\rule{0pt}{2mm}_\iy^l(\R^2)$
(Theorems~\ref{obschij_vid} and~\ref{th_16}). In particular, in
doing this we show that the non-trivial zeros of the principal
symbol of a weakly coercive operator are simple
(Proposition~\ref{prop_weak_coerc}, (iv)). Note that to prove this
last result, as well as in the proof of
Theorem~\ref{de_Leeuw_vary}, we use an analogue of
Theorem~\ref{deLeu-Mir_intr}, an anisotropic version of
Proposition~\ref{mal_neodn}. This application of
Proposition~\ref{mal_neodn} to the proof of 'isotropic' results is
based on the possibility, in principle, of a non-unique selection
of the principal part of a differential operator.

Note also that topological arguments are also used in \pr\,4, to
prove an analogue of Theorem~\ref{exist_quasi_system} in the case
of a weakly coercive system (Theorem~\ref{prop6}). Namely,
invoking Borsuk's theorem (Theorem~\ref{borsuk_2}) and degree
theory we show that, under some restrictions, the system $\PjxD$
has even order.

It is also worth mentioning that in \pr\,6, in the construction of
weakly coercive, but non-elliptic systems, new non-symmetric
multipliers on $L^p$, $p\in[1,\iy]$, arise, which are not
traditional in elliptic theory. For instance, it is shown in the
proof of Theorem~\ref{th_4.3} that if $P(\xi)$ is an elliptic
polynomial of degree $l$, then
$$
m(\xi):=\chi(\xi)\frac{\xi^\al}{P(\xi)\sum_{k=2}^n
(1+\xi_k^2)}\in\cm_1(\R^n) \qquad \text{for}\quad |\al|\le
l+1,\quad \al_1\le l-1,
$$
that is, $m(\cdot)$ is a multiplier on $L^1(\R^n)$, hence a
multiplier on $L^p(\R^n)$ for $p\in[1,\iy]$. Here $\chi(\cdot)$ is
a suitable 'cutoff' function. To verify the inclusion
$m\in\cm_p(\R^n)$ for $p\in(1,\iy)$ we can use the
Mikhlin-Lizorkin theorem (see~\cite{BDLM}, and also~\cite{Liz}
and~\cite{Mih}), but this is insufficient for verifying the
inclusion $m\in\cm_1$. To prove the latter we use a result on
multipliers from~\cite{BDLM}.

The paper is organized as follows. In \pr\,2 we present auxiliary
topological and analytic results necessary in what follows. In
\pr\,3 we prove the existence criterion for $l$-quasielliptic
systems (Theorem~\ref{exist_quasi_system}) and a stability
criterion  for systems of order $l$ under perturbations of order
$\le l-1$ (Proposition~\ref{har_ellipt}). We devote \pr\,4 to
properties of weakly coercive systems in the isotropic spaces
$\WplRn$. We also prove there analogues of
Theorem~\ref{deLeu-Mir_intr} for the case of a homogeneous system
(Theorem~\ref{odnorod}) and that of an operator with variable
coefficients (Theorem~\ref{de_Leeuw_vary}). In \pr\,5 we give a
complete description of operators in two variables that are weakly
coercive in $\overset{\circ}{W}\rule{0pt}{2mm}_\iy^l(\R^2)$, but
are not elliptic (Theorems~\ref{obschij_vid} and~\ref{th_16}).
Finally, \pr\,6 is devoted to describing wide classes of
non-elliptic systems that are weakly coercive in the isotropic
space $\WplRnInf$ (Theorem~\ref{th_4.3}).

A part of the results here were announced (without proofs)
in~\cite{dan2004} and~\cite{dan2007}.

We would like to express our sincere gratitude to L.~R.~Volevich
with whom we repeatedly discussed the results of the work. We are
also grateful to O.~V.~Besov, L.~D.~Kudryavtsev, S.~I.~Pokhozhaev,
as well as to all participants of their seminar, at which this
work was presented, and also to L.~L.~Oridoroga, for useful
discussions. Finally, we are deeply thankful to the referee, who
read this manuscript very carefully and pointed out several
mistakes in its original version.

We devote this work to the blessed memory of L.~R.~Volevich, a
remarkable person and mathematician. M.~M.~Malamud was a close
friend of L.~R.~Volevich, who had a significant influence on his
understanding of elliptic theory.

\section{Preliminaries}

We will use the following notation. Let $\Z_+:=\N\cup\{0\}$, let
$\Z^n_+:=\Z_+\times\dots\times\Z_+$ ($n$ is the number of
factors), and $\Z_2:=\{0,1\}$. Further, let
$D_k:=-i{\partial}/{\partial x_k}$ and $D=(D_1, D_2, \dots, D_n)$;
for a multi-index $\al=(\al_1, \dots, \al_n)\in \Z^n_+$ we set
$|\al|:=\al_1+\dots+\al_n$ and $D^\al:=D^{\al_1}_1 D^{\al_2}_2
\dots D^{\al_n}_n$. If $l=(l_1, \dots,l_n)\in\N^n$ and $\alpha \in
{\Z}_+^n$, then $|\al:l|:=\al_1 / l_1+ \dots + \al_n/l_n$.

Also let $|x|:=(\sum_1^n x_k^2)^{1/2}$, $\langle
x,y\rangle:=\sum_1^n x_k y_k$, where $x=(x_1,\dots,x_n)$,
$y=(y_1,\dots,y_n)$, $x,y\in\R^n$. Denote by
$\S_r^n:=\{x\in\R^{n+1}: |x|=r\}$ the $n$-dimensional sphere of
radius $r$ in $\R^{n+1}$, with $\S^n:=\S_1^n$; and by
$B_r^n:=\{x\in\R^n: |x|\le r\}$ the closed ball of radius $r$.

We denote by $\I=\I_n$ the identity operator in $\R^n$ and by
$\cm_p=\cm_p(\R^n)$ the algebra of multipliers on $L^p(\R^n)$,\
$p\in[1,\iy]$.

\subsection{Topological concepts}

\begin{theorem} \label{borsuk}
\emph{(the Borsuk-Ulam theorem; see~\cite{spenyer}, Ch.\,5,
\pr\,8.9)} For each continuous mapping $f:\S^n\to\R^n,\ n\ge 1$,
there is a point $x\in\S^n$ such that $f(x)=f(-x)$.
\end{theorem}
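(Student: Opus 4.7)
The plan is to reduce the statement to the classical non-existence of an odd continuous map $S^n\to S^{n-1}$, and then to prove that non-existence by an inductive argument using mod-$2$ degree. First I would set
\begin{equation*}
  g(x):=f(x)-f(-x),\qquad x\in\S^n,
\end{equation*}
observe that $g$ is odd, i.e.\ $g(-x)=-g(x)$, and note that the statement $f(x)=f(-x)$ for some $x$ is exactly $g(x)=0$. Arguing by contradiction, if $g$ never vanished on $\S^n$ then
\begin{equation*}
  h(x):=\frac{g(x)}{|g(x)|}
\end{equation*}
would define a continuous odd map $h:\S^n\to\S^{n-1}$. So the entire theorem reduces to the antipodal non-existence lemma: no continuous odd map $\S^n\to\S^{n-1}$ exists for $n\ge 1$.

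For the base case $n=1$, an odd continuous map $\S^1\to\S^0=\{-1,+1\}$ would have connected (hence single-point) image, contradicting $h(-x)=-h(x)$. For the inductive step, assuming no odd map $\S^{n-1}\to\S^{n-2}$ exists, suppose $h:\S^n\to\S^{n-1}$ is continuous and odd. Restricting $h$ to the equator $\S^{n-1}\subset\S^n$ gives a continuous odd map $h_0:\S^{n-1}\to\S^{n-1}$. The key fact is that an odd self-map of $\S^{n-1}$ has odd topological degree (Borsuk's theorem on degree of odd maps); in particular $\deg(h_0)\ne 0$, so $h_0$ is not null-homotopic. On the other hand, $h_0$ extends continuously to the upper closed hemisphere (a ball) via $h$, forcing $h_0$ to be null-homotopic in $\S^{n-1}$, a contradiction.

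The main obstacle is therefore the lemma that every continuous odd self-map of $\S^{n-1}$ has odd degree; this is itself a non-trivial topological statement, traditionally proved by equivariant simplicial approximation (approximating $h_0$ by a $\Z_2$-equivariant simplicial map on an antipodally-symmetric triangulation and computing its mod-$2$ degree cell-by-cell) or, more slickly, via the cohomology ring $H^\ast(\mathbb{RP}^{n-1};\Z_2)=\Z_2[w]/(w^n)$ and functoriality under the quotient $\S^{n-1}\to\mathbb{RP}^{n-1}$. Either route gives the odd-degree statement and closes the induction; any other component of the proof (the reduction to the antipodal lemma and the extension/null-homotopy argument on the hemisphere) is then routine. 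Since the authors only cite the result from Spanier, in the paper itself it suffices to invoke this proof outline rather than carry out the algebraic-topological step in detail.
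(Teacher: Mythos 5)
The paper states this classical result by citation (Spanier, Ch.~5) and gives no proof of its own, so there is no internal argument to compare against. Your reduction to the non-existence of a continuous odd map $\S^n\to\S^{n-1}$ is correct, and deducing that non-existence from Borsuk's odd-degree theorem is a standard and valid route; note also that the paper records the odd-degree theorem itself, again by citation, as Theorem~\ref{borsuk_2}, so invoking it as a black box is consistent with the paper's level of detail. One structural quibble: the ``inductive step'' you set up never actually uses the inductive hypothesis that no odd map $\S^{n-1}\to\S^{n-2}$ exists. Once the odd-degree theorem for self-maps of $\S^{n-1}$ is granted, the contradiction --- odd degree hence nonzero degree, yet null-homotopic because $h_0$ extends over the closed upper hemisphere --- is immediate for every $n\ge 2$, while $n=1$ is disposed of by your connectedness argument. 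So the argument is really a direct one rather than an induction, and phrasing it as an induction without using the hypothesis risks making a reader suspect circularity with Theorem~\ref{borsuk_2}, which is itself sometimes proved by a closely related induction.
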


Following~\cite{spenyer}, Ch.\,4, \pr\,7, and~\cite{nirenb} recall
the notion of the degree of a map. As is known, the
$n$-dimensional homotopy group of the sphere $\S^n$ is isomorphic
to $\Z$,\; $\pi_n(\S^n) \simeq \Z$. Each continuous map
$f:\S^n\to\S^n$ induces a group homomorphism
$f_*:\pi_n(\S^n)\to\pi_n(\S^n)$, hence $f_*:\Z\to k\Z$. The
integer $k$ does not depend on the choice of a generator of the
group $\pi_n(\S^n)$; it is referred to as \emph{the degree of $f$}
and is denoted by $\deg f$.

Since the $n$-dimensional homology group $H_n(\S^n;{\Z})
\simeq{\Z}$, the degree of a map $f:\ \S^n\to \S^n$ can be defined
in terms of the homomorphism $f_{* n}:\ H_n(\S^n;{\Z})\to
H_n(\S^n;{\Z})$. These definitions are equivalent.

Further, homotopic maps have equal degree. The converse also holds
(Hopf's theorem).

Since $\R^{n+1}\setminus\{0\}$ is homotopy equivalent to $\S^n$,
it follows that $\pi_n(\R^{n+1}\setminus\{0\}) \simeq \pi_n(\S^n)$
and so maps $f:\S^n\to\R^{n+1}\setminus\{0\}$ have well defined
degrees.

We will use the following statements repeatedly.

\begin{theorem} \emph{(see~\cite{nirenb}, \pr\,1.4)}
\label{step_otobr} A continuous map
$f:\S^n\to\R^{n+1}\setminus\{0\}$ can be extended to a continuous
map of the closed ball $B_1^{n+1}$ into $\R^{n+1}\setminus\{0\}$
if and only if $\deg f=0$.
\end{theorem}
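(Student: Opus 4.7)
The plan is to verify the two implications separately, using the equivalence between degree and homotopy class (Hopf's theorem, cited in the excerpt immediately before the statement) as the bridge.

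First I would handle the easier forward direction. Assume $f$ extends to a continuous map $F:B_1^{n+1}\to\R^{n+1}\setminus\{0\}$. Then the formula
\begin{equation*}
 H(x,t):=F\bigl((1-t)x\bigr),\qquad x\in\S^n,\ t\in[0,1],
\end{equation*}
defines a continuous map $\S^n\times[0,1]\to\R^{n+1}\setminus\{0\}$ with $H(\cdot,0)=f$ and $H(\cdot,1)\equiv F(0)$. Thus $f$ is homotopic in $\R^{n+1}\setminus\{0\}$ to a constant. Composing with a deformation retraction $\R^{n+1}\setminus\{0\}\to\S^n$ (or using the induced homomorphism on $\pi_n$), one obtains that the induced map $f_*:\pi_n(\S^n)\to\pi_n(\S^n)$ factors through the homotopy group of a point and is therefore zero, whence $\deg f=0$.

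For the converse, suppose $\deg f=0$. By Hopf's theorem (quoted above in the excerpt), $f$ is homotopic as a map into $\R^{n+1}\setminus\{0\}\simeq\S^n$ to a constant map $x\mapsto c_0$ with $c_0\neq 0$; let $H:\S^n\times[0,1]\to\R^{n+1}\setminus\{0\}$ be such a homotopy with $H(\cdot,0)=f$ and $H(\cdot,1)\equiv c_0$. I would then build the extension by radial coordinates: for $y\in B_1^{n+1}\setminus\{0\}$ set
\begin{equation*}
 F(y):=H\!\left(\frac{y}{|y|},\ 1-|y|\right),\qquad F(0):=c_0.
\end{equation*}
On the boundary $|y|=1$ this gives $F(y)=H(y,0)=f(y)$, so $F$ extends $f$. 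Continuity on $B_1^{n+1}\setminus\{0\}$ is inherited from $H$, and continuity at the origin follows from the fact that $H(x,1)=c_0$ is independent of $x$, so for $|y|$ small the value $F(y)$ differs from $c_0$ by an amount controlled by uniform continuity of $H$ on the compact set $\S^n\times[1-\delta,1]$. Since $H$ takes values in $\R^{n+1}\setminus\{0\}$, so does $F$.

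The main obstacle is really hidden in Hopf's theorem, which supplies the null-homotopy from the vanishing degree; once that is granted, both directions reduce to the standard coning trick ($y\mapsto(y/|y|,1-|y|)$), and the only genuine verification is continuity at the origin, which is handled by the constancy of $H(\cdot,1)$. No new ideas beyond those already recalled in \pr\,2 are needed.
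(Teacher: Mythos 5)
The paper does not prove Theorem~\ref{step_otobr} itself; it is quoted from Nirenberg's lecture notes (\pr\,1.4 of that reference) and used as a black box, so there is no in-paper argument for you to match. That said, your blind proof is correct and is exactly the standard argument one would find in a topology text. The forward direction (extension implies null-homotopy implies degree zero) is fine: $H(x,t)=F((1-t)x)$ is a homotopy in $\R^{n+1}\setminus\{0\}$ from $f$ to a constant, and invariance of degree under homotopy finishes it. The backward direction correctly invokes Hopf's theorem, recalled in the paper just before this statement, to obtain a null-homotopy $H$ with $H(\cdot,1)\equiv c_0$, and then the cone construction $F(y)=H(y/|y|,\,1-|y|)$ for $y\neq 0$, $F(0)=c_0$, gives the desired extension. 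Your justification of continuity at the origin via uniform continuity of $H$ on $\S^n\times[1-\delta,1]$ together with the constancy of $H(\cdot,1)$ is exactly the point that needs checking, and you handled it correctly. The only cosmetic remark is that in the forward direction you could shortcut the "factors through the homotopy group of a point" wording by simply observing that a constant map has degree zero and degree is a homotopy invariant; but the argument you gave is equivalent. No gaps.
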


\begin{theorem} \label{borsuk_2}
\emph{(Borsuk's theorem on the degree of a map; see~\cite{nirenb},
\pr\,1.7)} Let $f$ be an odd map of the sphere $\S^n$ into inself:
$f(-x)=-f(x)$. Then its degree $\deg f$ is odd.
\end{theorem}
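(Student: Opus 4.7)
The plan is to argue by induction on $n$. For the base case $n=0$, the sphere $\S^0=\{+1,-1\}$ and an odd self-map is either the identity (degree $+1$) or the antipodal swap (degree $-1$); in either case $\deg f$ is odd.

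For the inductive step, suppose the result for $\S^{n-1}$ with $n\ge 1$, and let $f\colon\S^n\to\S^n$ be continuous and odd. First I would replace $f$ by a smooth odd representative of the same homotopy class: convolve with a symmetric mollifier to obtain a smooth approximation $g$, then normalize its odd part, $\tilde f(x):=(g(x)-g(-x))/\|g(x)-g(-x)\|$, which is well defined for sufficiently fine mollification, close to $f$, and manifestly odd, so $\deg\tilde f=\deg f$. Next, using an equivariant transversality argument, I would homotope $f$ through smooth odd maps so that it carries the equator $E:=\S^{n-1}$ into $E$; the deformation is constructed on the closed upper hemisphere $\overline{D}_+$ and extended to $\overline{D}_-$ antipodally, preserving oddness at every stage. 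The restriction $h:=f|_E$ is then an odd self-map of $\S^{n-1}$, so $\deg h$ is odd by the inductive hypothesis.

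It remains to show $\deg f\equiv\deg h\pmod 2$. Pick a regular value $y$ in the open upper hemisphere $D_+$; oddness makes $-y$ regular too, and generically $f^{-1}(\{\pm y\})$ is disjoint from $E$. A direct cellular (or relative-degree) argument on the pair $(\overline{D}_+,E)$, combined with the bijection $x\mapsto -x$ between lower-hemisphere preimages of $y$ and upper-hemisphere preimages of $-y$ given by oddness, yields $\deg f\equiv\deg h\pmod 2$, which is odd.

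The main technical obstacle is the equivariant transversality step: arranging $f(E)\subset E$ by a homotopy that respects the odd symmetry. This is resolved by performing every perturbation on one hemisphere and extending antipodally; the subsequent signed preimage bookkeeping is routine.
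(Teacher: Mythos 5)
The paper does not prove this statement; it is quoted as a known result with a pointer to Nirenberg's lecture notes, so there is no internal argument to compare against. Your inductive strategy (smooth the map equivariantly, push it to an equator-preserving odd map, apply the inductive hypothesis to the restriction, and relate degrees modulo~$2$) is one of the recognized classical routes to Borsuk's theorem and is, in outline, sound. The base case and the equivariant smoothing are fine.

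Two steps, however, are treated as more routine than they are. First, the mechanism for arranging $f(E)\subset E$ is not transversality. Transversality would only give that $f^{-1}(E)$ is a nice hypersurface, which is an entirely different statement. What actually makes the deformation possible is a dimension count: once $f$ is smooth, Sard's theorem forces $f(E)$, the image of an $(n-1)$-manifold, to miss some point $p$, hence by oddness also $-p$; one then retracts $\S^n\setminus\{\pm p\}$ equivariantly onto the great subsphere orthogonal to $p$, rotates, and only \emph{then} invokes the equivariant homotopy-extension step you describe (extend on $\overline{D}_+$, reflect antipodally to $\overline{D}_-$, and check consistency on $E$ using the oddness of the boundary homotopy). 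Without the Sard-type observation there is no reason the homotopy exists at all. Second, the congruence $\deg f\equiv\deg(f|_E)\pmod 2$ is not a formal consequence of $f(E)\subset E$; it is false for general equator-preserving self-maps (for $n=1$ one can build a degree-$2$ map fixing $\pm e_1$, whose restriction to $E$ is the identity of degree $1$). The congruence really does rely on oddness. The quickest rigorous route is homological: in $H_n(\S^n,E)\cong\Z\oplus\Z$ the boundary map and the map induced by inclusion of $H_n(\S^n)$ impose $a+b=\deg f$ and $a-b=\deg(f|_E)$ on the matrix of $f_*$, and oddness (via $f_*a_*=a_*f_*$, where $a$ is the antipode) forces $f_*$ to be symmetric, from which $\deg f-\deg(f|_E)=2b$. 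Your ``signed preimage bookkeeping'' can be made to reproduce this, but it has to encode exactly the symmetry constraint just described; as written it does not, so this step needs to be filled in rather than labeled routine.
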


\subsection{Analytic results}

\begin{lemma} \emph{(Eberlein's theorem; see~\cite{LeeMir}, p.\,114)}
\label{eberlein} Let $\mu$ be a finite Borel measure on $\R^n$ and
$M$ the Fourier-Stieltjes transform of $\mu$. Then the constant
function $c\equiv\mu(0)$ can be uniformly approximated by
functions of the form $\pi\ast M$, where $\pi$ is a probability
measure (that is, $\pi(\R^n)=1$).
\end{lemma}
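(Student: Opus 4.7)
The plan is to exhibit an explicit family of probability measures $\pi_T$ whose convolutions with $M$ converge uniformly to the constant $c=\mu(\{0\})$ as $T\to\iy$. A natural choice is the normalized uniform distribution on a cube,
\[
d\pi_T(\eta) := (2T)^{-n}\chi_{[-T,T]^n}(\eta)\, d\eta, \qquad T>0,
\]
since then the inverse Fourier--Stieltjes transform of $\pi_T$ is a product of sinc kernels that pinches to the unit mass at the origin.

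By Fubini's theorem, I would rewrite
\[
(\pi_T \ast M)(\xi) = \int_{\R^n} M(\xi-\eta)\,d\pi_T(\eta)
= \int_{\R^n} e^{-i\langle x,\xi\rangle}K_T(x)\,d\mu(x),
\]
where
\[
K_T(x) = \int_{\R^n} e^{i\langle x,\eta\rangle}\, d\pi_T(\eta) = \prod_{k=1}^n \frac{\sin(Tx_k)}{Tx_k}.
\]
Split the outer integral into the contributions from $\{0\}$ and from $\R^n\setminus\{0\}$. The first equals $\mu(\{0\})$, because $K_T(0)=1$ and $e^{-i\langle 0,\xi\rangle}=1$, and this part supplies the desired constant, independently of $T$ and $\xi$.

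It then remains to estimate the remainder
\[
r_T(\xi) := \int_{\R^n \setminus \{0\}} e^{-i\langle x,\xi\rangle}K_T(x)\, d\mu(x).
\]
The crucial observation is that the integrand is dominated by $|K_T(x)|\le 1$, a bound independent of $\xi$. Since $K_T(x)\to 0$ pointwise on $\R^n\setminus\{0\}$ as $T\to\iy$ (any nonzero coordinate $x_k$ produces a sinc factor vanishing in the limit), while the total variation measure $|\mu|$ is finite, Lebesgue's dominated convergence theorem yields
\[
\|r_T\|_\iy \le \int_{\R^n \setminus \{0\}} |K_T(x)|\, d|\mu|(x) \longrightarrow 0.
\]

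I do not anticipate a serious obstacle. The only mild subtlety is that uniformity in $\xi$ must be extracted from a $\xi$-independent dominating function, which the crude bound $|K_T|\le 1$ provides automatically; the whole argument thus reduces to choosing $\pi_T$ so that its Fourier transform is a bounded approximate identity for the point mass at the origin, a role played here by the sinc kernel.
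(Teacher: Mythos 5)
Your proof is correct. The paper itself offers no proof of Lemma~\ref{eberlein}; it imports the statement from de~Leeuw--Mirkil and uses it as a black box in the proof of Theorem~\ref{odnorod}, so there is no internal argument to compare against. Your direct construction -- taking $\pi_T$ uniform on $[-T,T]^n$, so that $\widehat{\pi_T}(x)=\prod_{k}\frac{\sin(Tx_k)}{Tx_k}$ is a product of sinc factors bounded by $1$ and tending pointwise to the indicator of $\{0\}$, then invoking dominated convergence against the finite measure $|\mu|$ -- is a clean, self-contained elementary route. The only ingredients are Fubini on a finite product measure, the bound $|\sin t/t|\le 1$, and the $\xi$-free domination that produces uniformity for free; this is a transparent alternative to invoking the general theory of weakly almost periodic functions, which is the setting where Eberlein's theorem is usually stated. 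Two small remarks: with the paper's convention $\hat\mu(\xi)=\int e^{i\langle x,\xi\rangle}\,d\mu(x)$ (eq.~\eqref{2.7A}) the phase in your displayed formula for $(\pi_T\ast M)(\xi)$ should read $e^{i\langle x,\xi\rangle}$, not $e^{-i\langle x,\xi\rangle}$, but since only $|K_T|$ enters the final estimate this is immaterial; and you rightly read $\mu(0)$ as the atom $\mu(\{0\})$, which is the interpretation the paper uses when it appeals to Proposition~\ref{c_j} and then to this lemma in the proof of Theorem~\ref{odnorod}.
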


\begin{prop}\label{Q=MP+N}
\emph{(see~\cite{LeeMir}, p.\,113, Proposition 1)} Let $Q(D)$ and
$\Pj$ be differential operators of the form~\eqref{Q_Pj} with
constant coefficients. Then the estimate~\eqref{200} for $p=\iy$
and $\Om=\R^n$ is equivalent to the identity
\begin{equation} \label{zzz}
 Q(\xi)=\sum_{j=1}^N M_j(\xi) P_j(\xi) + M_{N+1}(\xi),\qquad
 \xi\in\R^n,
\end{equation}
where the $\{M_j(\cdot)\}_1^{N+1}$ are the Fourier-Stieltjes
transforms of finite Borel measures on $\R^n$.
\end{prop}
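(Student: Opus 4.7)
The two directions are of very unequal difficulty, and I would argue them separately.

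For the implication $\eqref{zzz} \Rightarrow \eqref{200}$, the key observation is that if $M_j = \hat\mu_j$ with $\mu_j$ a finite Borel measure on $\R^n$, then the Fourier multiplier $M_j(D)$ coincides with convolution against $\mu_j$ (up to the usual sign convention), hence is bounded on $L^\infty(\R^n)$ with operator norm at most the total variation $\|\mu_j\|$. Applying \eqref{zzz} to $\hat f$ and inverting the Fourier transform gives, for $f \in C_0^\infty(\R^n)$,
\begin{equation*}
Q(D) f = \sum_{j=1}^N \mu_j \ast P_j(D) f + \mu_{N+1} \ast f,
\end{equation*}
and taking $L^\infty$-norms yields \eqref{200} with $C_1 = \max_j \|\mu_j\|$ and $C_2 = \|\mu_{N+1}\|$.

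For the converse, the plan is to combine Hahn-Banach with Eberlein's lemma. First I would introduce the graph subspace
\begin{equation*}
V := \{(P_1(D) f, \dots, P_N(D) f, f) : f \in C_0^\infty(\R^n)\} \subset L^\infty(\R^n)^{N+1}
\end{equation*}
and the linear functional $L \colon V \to \C$ defined by $L(P_1(D)f, \dots, P_N(D)f, f) := (Q(D) f)(0)$. It is well defined because the last coordinate recovers $f$, and \eqref{200} together with the trivial bound $|(Q(D)f)(0)| \le \|Q(D)f\|_\infty$ shows that $L$ is continuous with respect to the norm $C_1 \sum_{j=1}^N \|u_j\|_\infty + C_2 \|u_{N+1}\|_\infty$. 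Hahn-Banach then extends $L$ to the entire product, producing continuous linear functionals $\Lambda_1,\dots,\Lambda_{N+1}$ on $L^\infty(\R^n)$ with
\begin{equation*}
(Q(D) f)(0) = \sum_{j=1}^N \Lambda_j(P_j(D) f) + \Lambda_{N+1}(f), \qquad f \in C_0^\infty(\R^n).
\end{equation*}
Since $P_j(D)$ and $Q(D)$ commute with translations, replacing $f$ by $f(\cdot+h)$ yields the same identity at every point $h\in\R^n$.

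To convert this into the pointwise identity \eqref{zzz}, I would test the preceding formula against approximate plane waves $f_\e(x) := \phi(\e x) e^{i \xi \cdot x}$, where $\phi \in C_0^\infty(\R^n)$ with $\phi(0)=1$; Leibniz' formula gives $P_j(D) f_\e = P_j(\xi)\phi(\e x) e^{i\xi\cdot x} + O(\e)$ as $\e\to 0$. If each $\Lambda_j$ could be represented by a genuine finite Borel measure $\mu_j$ on $\R^n$, then evaluating $\Lambda_j$ on $x\mapsto e^{i\xi\cdot x}$ would produce exactly $\hat\mu_j(\xi)=:M_j(\xi)$, and passing to the limit $\e\to 0$ would give \eqref{zzz}.

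The substantive obstacle, and the reason Lemma \ref{eberlein} is indispensable, is that a Hahn-Banach extension on $L^\infty(\R^n)$ is only a finitely additive set function, equivalently a measure on the Stone-\v{C}ech compactification, and the naive choice $M_j(\xi):=\Lambda_j(x\mapsto e^{i\xi\cdot x})$ merely produces bounded functions with no a priori reason to be Fourier-Stieltjes transforms. Eberlein's theorem is invoked precisely to bridge this gap: by averaging $\Lambda_j$ against a suitable probability measure $\pi$ as in Lemma \ref{eberlein}, one approximates $\Lambda_j$ uniformly by the action of a genuine finite Borel measure up to a controllable error, and a diagonal passage to the limit (together with $\e\to 0$) produces Fourier-Stieltjes transforms $M_j$ for which \eqref{zzz} holds exactly. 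This Eberlein step -- converting the abstract Hahn-Banach functionals into honest Fourier-Stieltjes data -- is the only non-routine ingredient.
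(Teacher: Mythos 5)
The paper does not prove this proposition; it is quoted verbatim from de~Leeuw and Mirkil~\cite{LeeMir} (p.~113, Proposition~1). So the comparison is with the standard de~Leeuw--Mirkil argument, which your proposal partly reconstructs and partly misses.

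Your easy direction is correct: $M_j = \hat\mu_j$ gives $M_j(D)$ as convolution with a finite measure, $\|\mu_j\ast g\|_\infty \le \|\mu_j\|\,\|g\|_\infty$ closes the estimate, and no further comment is needed.

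The hard direction has the right scaffold (evaluation at the origin, Hahn--Banach over a product space, translation invariance, approximate plane waves), but it breaks down at exactly the point you yourself flag, and the repair you offer does not work. The concern is real: a Hahn--Banach extension of $L$ over $L^\infty(\R^n)^{N+1}$ lands in $\bigl(L^\infty\bigr)^* = \mathrm{ba}(\R^n)$, the finitely additive set functions. For such $\Lambda_j$ there is no dominated convergence, so the passage $\e\to 0$ in $\Lambda_j\bigl(\phi(\e\cdot)e^{i\xi\cdot}\bigr)$ is not justified. Eberlein's theorem (Lemma~\ref{eberlein}) does not resolve this. It is a statement about approximating the single constant $\mu(\{0\})$ by averages of translates of a Fourier--Stieltjes transform; it has nothing to say about upgrading a finitely additive functional to a countably additive one. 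In the paper, Eberlein's theorem is cited for the \emph{following} result, Proposition~\ref{c_j}, where it is used precisely to compute the constants $\la_j = \mu_j(0)$ appearing in~\eqref{zz}, and it is invoked again with the same purpose inside the proof of Theorem~\ref{odnorod}. It plays no role in Proposition~\ref{Q=MP+N}.

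The correct way to close the gap is much softer: observe that for $f\in C_0^\iy(\R^n)$ all the entries $P_1(D)f,\dots,P_N(D)f,f$ lie in $C_0(\R^n)$, on which the $L^\iy$-norm agrees with the sup norm. So the graph subspace $V$ sits inside $C_0(\R^n)^{N+1}$, and one should apply Hahn--Banach within $C_0(\R^n)^{N+1}$ (or equivalently restrict your $L^\infty$ extension to $C_0$, where every bounded functional is automatically a finite Radon measure by the Riesz representation theorem). This yields genuine finite complex Borel measures $\mu_1,\dots,\mu_{N+1}$ with
\begin{equation*}
(Qf)(0) = \sum_{j=1}^N \int_{\R^n} (P_j(D)f)\,d\mu_j + \int_{\R^n} f\,d\mu_{N+1}, \qquad f\in C_0^\iy(\R^n),
\end{equation*}
and then translation invariance, Fourier transforms, and the arbitrariness of $f\in C_0^\iy$ give~\eqref{zzz} directly, with $M_j$ the Fourier--Stieltjes transform of the reflected measure $\check\mu_j$. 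Once you have honest measures, dominated convergence legitimizes the approximate-plane-wave limit if you prefer that route, but it is cleaner to transform. In short: the missing idea is ``Hahn--Banach within $C_0$, then Riesz,'' and Eberlein's lemma is the wrong tool, belonging to a different part of the argument.
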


\begin{prop}\label{c_j} \emph{(see~\cite{LeeMir}, p,\,114)}
Let $\Pj$ be a system satisfying estimate~\eqref{200} with $p=\iy$
and $\Om=\R^n$, and assume that the principal forms
$\{P_j^l(\xi)\}_1^N$ are linearly independent. Also let
$\{\la_j\}_1^N$ be the coefficients in equality~\eqref{zz} and
$\{\mu_j\}_1^N$ be finite Borel measures, $\hat\mu_j=M_j$, where
$\{M_j(\cdot)\}_1^N$ are the functions in~\eqref{zzz}. Then
$\la_j=\mu_j(0)$,\ $j\in\{1,\dots,N\}$.
\end{prop}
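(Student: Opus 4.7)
The plan is to extract the constants $\la_j$ from the identity~\eqref{zzz} by averaging the bounded functions $M_j$ against a suitable probability measure and invoking Eberlein's theorem (Lemma~\ref{eberlein}) to pass from these averages to the point masses $\mu_j(0)$. First I would subtract $\sum_j\la_jP_j(\xi)$ from both sides of~\eqref{zzz} and use~\eqref{zz} (the conclusion of Proposition~\ref{mal_neodn}) to cancel the $l$-principal part, obtaining
\begin{equation*}
    \sum_{j=1}^N\bigl[M_j(\xi)-\la_j\bigr]P_j(\xi) = \Bigl[Q(\xi)-\sum_{j=1}^N\la_jP_j(\xi)\Bigr]-M_{N+1}(\xi) = O\bigl((1+|\xi|)^{l-1}\bigr),
\end{equation*}
the right-hand bound holding because the bracketed expression is a polynomial of degree $\le l-1$ (its $l$-principal part vanishes by~\eqref{zz}) while $M_{N+1}$ is bounded.

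Next I would apply Eberlein's theorem. Given $\eps>0$, for each $j$ it furnishes a probability measure $\pi_j$ with $\|\pi_j\ast M_j-\mu_j(0)\|_\iy<\eps$; the convolution $\pi:=\pi_1\ast\dots\ast\pi_N$ is a single probability measure, which may be assumed compactly supported (since Eberlein's approximants arise as Ces\`aro-type averages of Dirac masses), and it satisfies $\|\pi\ast M_j-\mu_j(0)\|_\iy<\eps$ simultaneously for every $j$. Convolving the previous identity with $\pi$ leaves a right-hand side that is still $O((1+|\xi|)^{l-1})$, and a direct computation---using the expansion $P_j(\xi-\eta)=P_j^l(\xi)+R_j(\xi,\eta)$ in which $R_j(\xi,\eta)$ has $\xi$-degree at most $l-1$, combined with the uniform bound on $|\eta|$ coming from $\operatorname{supp}\pi$---shows that
\begin{equation*}
    \pi\ast\bigl[(M_j-\la_j)P_j\bigr](\xi) = P_j^l(\xi)\bigl[(\pi\ast M_j)(\xi)-\la_j\bigr]+O\bigl((1+|\xi|)^{l-1}\bigr).
\end{equation*}

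Summing these identities over $j$, substituting $(\pi\ast M_j)(\xi)=\mu_j(0)+O(\eps)$, then setting $\xi=t\omega$ with $\omega\in\S^{n-1}$, dividing by $t^l$, and letting $t\to\iy$ produces
\begin{equation*}
    \Bigl|\sum_{j=1}^N\bigl[\mu_j(0)-\la_j\bigr]P_j^l(\omega)\Bigr|\le C\eps\sum_{j=1}^N|P_j^l(\omega)|.
\end{equation*}
Since $\eps>0$ is arbitrary, $\sum_j[\mu_j(0)-\la_j]P_j^l(\omega)$ must vanish identically on $\S^{n-1}$ and hence on $\R^n$, so the assumed linear independence of the principal forms $\{P_j^l\}_1^N$ forces $\la_j=\mu_j(0)$ for every $j$. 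The main obstacle will be the commutation-type calculation in the second paragraph: one must track carefully how convolution with the compactly supported probability measure $\pi$ interacts with the polynomial factor $P_j$ so as to convert the product $M_jP_j$ into $(\pi\ast M_j)\cdot P_j^l$ modulo an error of size $|\xi|^{l-1}$, the point being that $\pi\ast M_j$ is essentially a constant while the top-degree polynomial content is preserved.
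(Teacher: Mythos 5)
The paper does not prove Proposition~\ref{c_j}; it simply cites de Leeuw and Mirkil (p.~114), the very page from which Lemma~\ref{eberlein} (Eberlein's theorem) is also drawn, so the intended route is clearly the one you take. Your argument is correct: you cancel the $l$-principal part using~\eqref{zz} so that $\sum_j[M_j-\la_j]P_j$ is $O((1+|\xi|)^{l-1})$, smooth with a compactly supported probability measure $\pi$ produced by Eberlein's theorem so that $\pi\ast M_j\approx\mu_j(0)$ simultaneously for all $j$, use that $P_j(\xi-\eta)-P_j^l(\xi)$ has $\xi$-degree $\le l-1$ with coefficients bounded on $\operatorname{supp}\pi$ to turn the convolved identity into $\sum_j P_j^l(\xi)\bigl[(\pi\ast M_j)(\xi)-\la_j\bigr]=O((1+|\xi|)^{l-1})$, and then divide by $t^l$ at $\xi=t\omega$ and let $t\to\infty$, then $\eps\to 0$, to get $\sum_j[\mu_j(0)-\la_j]P_j^l\equiv 0$, which kills each coefficient by linear independence. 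The parenthetical justification for compact support of $\pi$ is loosely phrased but sound: Eberlein's approximants here are finite convex combinations of translates, i.e.\ $\pi=\sum_k c_k\delta_{\zeta^{(k)}}$, exactly the form the paper itself invokes at~\eqref{m3}, so $\pi$ is automatically finitely supported. Note that your scaling step implicitly uses that the $P_j^l$ are homogeneous of degree $l$, which is the case here since the proposition speaks of the ``principal forms''; in the genuinely anisotropic setting one would replace $\xi=t\omega$ by the quasi-homogeneous dilation, but that is not needed for this statement.
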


The next statement is well known to experts. Moreover, it was
mentioned (without proof) in~\cite{Bom}. For the sake of
completeness we present it here with the proof.

\begin{prop}\label{prop_alg_ineq}
Let $p\in[1,\iy]$ and let $Q(D)$ and $\Pj$ be differential
operators of the form~\eqref{Q_Pj} with constant coefficients.
Then the a priori estimate~\eqref{200} implies the following
algebraic inequality for the symbols:
\begin{equation}\label{alg_ineq}
|Q(\xi)|\le C'_1\sum_{j=1}^N |P_j(\xi)|+C'_2,\qquad \xi\in\R^n.
\end{equation}
\end{prop}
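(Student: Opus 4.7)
The plan is to insert into~\eqref{200} a one-parameter family of test functions concentrating near a prescribed frequency $\xi\in\R^n$ and to pass to the limit after a suitable rescaling. Fix $\xi\in\R^n$ and a nonzero $\phi\in C_0^\iy(\R^n)$, and for $R>0$ set
$$
f_R(x):=e^{i\langle\xi,x\rangle}\phi(x/R)\in C_0^\iy(\R^n).
$$
The identity $P(D)[e^{i\langle\xi,x\rangle}g(x)]=e^{i\langle\xi,x\rangle}P(\xi+D)g(x)$ combined with $D^\al\phi(x/R)=R^{-|\al|}(D^\al\phi)(x/R)$ gives
$$
P_j(D)f_R(x)=e^{i\langle\xi,x\rangle}\bigl(P_j(\xi+R^{-1}D)\phi\bigr)(x/R),
$$
and analogously for $Q(D)f_R$. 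A change of variable $y=x/R$ in the $L^p$-integral yields
$$
\|P_j(D)f_R\|_{L^p(\R^n)}=R^{n/p}\|P_j(\xi+R^{-1}D)\phi\|_{L^p(\R^n)}\quad (p\in[1,\iy)),
$$
while for $p=\iy$ the factor $R^{n/p}$ is absent, since $\sup_x|h(x/R)|=\sup_y|h(y)|$; the analogous equalities hold for $Q(D)f_R$ and for $f_R$ itself.

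Now expand
$$
P_j(\xi+R^{-1}D)\phi=P_j(\xi)\phi+\sum_{|\be|\ge 1}c_{j,\be}(\xi)R^{-|\be|}D^\be\phi,
$$
with $c_{j,\be}$ polynomial in $\xi$. Since $\phi$ and all its derivatives belong to $L^p(\R^n)$, the remainder is $O(R^{-1})$ in $L^p$, and hence
$$
\|P_j(\xi+R^{-1}D)\phi\|_{L^p}\longrightarrow |P_j(\xi)|\cdot\|\phi\|_{L^p}\qquad(R\to\iy),
$$
and likewise for $Q$. Substituting $f_R$ into~\eqref{200}, dividing by $R^{n/p}\|\phi\|_{L^p}$ (by $\|\phi\|_{L^\iy}$ when $p=\iy$), and sending $R\to\iy$, one obtains
$$
|Q(\xi)|\le C_1\sum_{j=1}^N|P_j(\xi)|+C_2,\qquad\xi\in\R^n,
$$
so one may take $C_1'=C_1$ and $C_2'=C_2$ in~\eqref{alg_ineq}. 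The argument presents no real obstacle; the only point worth checking is the uniform smallness of the $R^{-|\be|}$-remainder terms in the expansion of $P_j(\xi+R^{-1}D)\phi$, which is immediate from $\phi\in C_0^\iy(\R^n)$.
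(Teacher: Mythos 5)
Your proof is correct and is essentially the same argument the paper uses: both plug a modulated, rescaled bump function ($f_r=\psi(x/r)e^{i\langle x,\xi\rangle}$ in the paper, your $f_R$) into~\eqref{200}, expand the symbol around $\xi$ (the paper via the Hörmander--Leibniz formula~\eqref{herm}, you via the equivalent modulation identity $P(D)[e^{i\langle\xi,x\rangle}g]=e^{i\langle\xi,x\rangle}P(\xi+D)g$), observe that all derivative terms contribute $O(R^{-1})$ after rescaling, divide out the common factor $R^{n/p}\|\phi\|_p$, and let $R\to\iy$. The only difference is cosmetic — your modulation identity packages the expansion a bit more cleanly than the Leibniz route — and your observation that $C_1'=C_1$, $C_2'=C_2$ matches the paper's conclusion.
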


\begin{proof}[Sketch of the proof]
(i) Let $p\in[1,\iy)$ and let $\psi\in C_0^\iy(\R^n)$,\
$\psi\not\equiv 0$. We set $\psi_r(x):=\psi(x/r) =
\psi(x_1/r,\dots,x_n/r)$,\ $r>0$. It can be verified directly that
\begin{equation} \label{2'''}
\|D^\al\psi_r\cdot P(D) e^{i\langle x,\xi\rangle}\|_p=
r^{-|\al|}\cdot r^{n/p}|P(\xi)|\cdot\|D^\al\psi\|_p.
\end{equation}
Applying Leibniz's formula  to $f_r(x):=\psi_r(x) e^{i\langle
x,\xi\rangle}$ (see~\cite{Horm}, Ch.\,II, \pr\,2.1) we obtain
\begin{equation} \label{herm}
P(D) f_r=P(D)(\psi_r e^{i\langle x,\xi\rangle})=\sum_\al
\(\frac{1}{|\al|!}\) D^\al \psi_r \cdot P^{(\al)}(D) e^{i\langle
x,\xi\rangle},
\end{equation}
where $P^{(\al)}(D)$ is the operator with symbol $D^\al P(\xi)$.
Taking~\eqref{2'''} and~\eqref{herm} into account we obtain
\begin{gather*}
\|P_j(D)f_r\|_p\le r^{n/p}|P_j(\xi)|\cdot\|\psi\|_p+o\(r^{n/p}\),
\\ \|Q(D)f_r\|_p\ge
r^{n/p}|Q(\xi)|\cdot\|\psi\|_p+o\(r^{n/p}\).
\end{gather*}

Substituting the above expressions in~\eqref{200} we arrive at the
estimate
\begin{equation}\label{3'''}
r^{n/p}|Q(\xi)|\cdot\|\psi\|_p+o\(r^{n/p}\) \le C'_1\sum_{j=1}^N
r^{n/p}|P_j(\xi)|\cdot\|\psi\|_p+o\(r^{n/p}\)+C'_2
r^{n/p}\|\psi\|_p.
 \end{equation}
Dividing both sides of~\eqref{3'''} by $r^{n/p}\|\psi\|_p>0$ and
then passing to the limit as $r\to\iy$ we obtain~\eqref{alg_ineq}.

(ii) For $p=\iy$ the proof is similar: it suffices to note that
for $p=\iy$ the factor $r^{n/p}$ in~\eqref{2'''} is equal to
$r^{n/\iy}=r^0=1$.
\end{proof}

\begin{remark} \label{rem_mal_neodn}
(i) Proposition~\ref{mal_neodn} (together with its proof) remains
valid if all the differential monomials $D^\al$ depend only on the
components $D_1,\dots,D_m$, in accordance with the decomposition
$\R^n=\R^m\oplus\R^{n-m}$, that is, $D^\al=D_1^{\al_1}\dots
D_m^{\al_m}\otimes\I_{n-m}$. In this case equality~\eqref{zz}
follows from inequality~\eqref{200} with $p=\iy$ in which $f\in
C_0^\iy(\Om_m)$, where $\Om_m=\pi_m\Om$ is the projection of the
domain $\Om$ onto $\R^m$.

(ii) Inequality~\eqref{alg_ineq} is a consequence of the
estimate~\eqref{200}, but is not equivalent to~\eqref{200}. For
instance, if $P(\xi)=\xi_1^2+\xi_2^2$ and
$Q(\xi)=\xi_1^2-\xi_1\xi_2+\xi_2^2$, then $Q(\xi)< \frac 3 2
P(\xi)$. At the same time, in view of Proposition~\ref{mal_neodn},
the estimate~\eqref{200} does not hold for $p=\iy$.

(iii) Proposition~\ref{prop_alg_ineq} fails for arbitrary domains
$\Om$. For instance, if $\Om$ is bounded and $p=2$, then, by a
theorem of H\"ormander's in~\cite{Horm}, \pr\,2.3
estimate~\eqref{200} holds for $P(D)=D_1^2-D_2^2$ and $Q(D)=D_1$,
whereas the inequality $|\xi_1|\le C\[|\xi_1^2-\xi_2^2|+1\]$
fails.
\end{remark}

\subsection{Multipliers}

\begin{definition} (see~\cite{St}, Ch.\,IV, \pr\,3) \label{mult_def}
Let $\F$ be the Fourier transform in $L^2(\R^n)$. A bounded
(Lebesgue-)measurable function $\Phi : \R^n\to\C$ is called a
\emph{multiplier} on $L^p(\R^n)$,\ $p\in[1,\iy]$, if the
convolution operator $f\to T_{\Phi} f =:\F^{-1}\Phi \F f$ takes
$L^p(\R^n)\cap L^2(\R^n)$ to $L^p(\R^n)$ and is bounded in
$L^p(\R^n)$.
 \end{definition}

A simple description of the spaces $\cm_p$ of multipliers on
$L^p(\R^n)$ is known only for $p=1,2,\iy$. In particular,
$\cm_1=\cm_\iy$ is the set of images under the Fourier-Stieltjes
transform of finite Borel measures on $\R^n$ (see~\cite{St},
Ch.\,IV, \pr\,3)
\begin{equation}\label{2.7A}
\Phi\in\cm_1 =\cm_\iy  \quad\Longleftrightarrow\quad \Phi(\xi)=
\hat{\mu}(\xi) :=\int_{\R^n}
 e^{i\langle x,\xi\rangle}\,d\mu(x),\qquad \mu(\R^n)<\iy.
\end{equation}
For other values $p\in (1,\iy)$ only sufficient conditions are
known for the inclusion $\Phi\in {\cm}_p$ to hold (see~\cite{BIN},
\cite{tribel}, \cite{St}). Note that $\cm_1\subset\cm_p$ for
$p\in(1,\iy)$ by~\eqref{2.7A}.

We shall need the following result of~\cite{BDLM} on multipliers
on $L^1$ which in appearance is a (fairly rough) analogue of the
Mikhlin-Lizorkin theorem (see~\cite{BIN}).

\begin{theorem} \emph{(see~\cite{BDLM}, \pr\,3, Theorem 2)} \label{th1}
Let $\Phi\in C(\R^n)$ and assume that for some constants
$\delta\in(0,1)$ and $A_{\delta}>0$,\ $\Phi$ satisfies the
following conditions:
\begin{equation} \label{dv1}
(i) \quad \quad \prod^n_{j=1}(1+|\xi_j|)^{\delta}| \Phi(\xi)|\le
A_{\delta}, \qquad \xi\in \R^n;
\end{equation}

(ii) for all multi-indices $\al,\be\in\Z_2^n$ such that $\al+\be
=(1, 1, \dots, 1)$, there exist derivatives $D^{\al}\Phi$ and
\begin{equation} \label{dv2}
\prod_{j\in \N_\al} |\xi_j|^{1-\delta} (1+|\xi_j|^{2\delta})
\prod_{j\in \N_\be} (1+|\xi_j|)^{\delta} |D_1^{\al_1}\dots
D_n^{\al_n} \Phi(\xi)| \le A_{\delta},\qquad \xi\in\R^n.
\end{equation}
Here $\N_\al\subset\{1,\dots,n\}$ is the support of the
multi-index $\al=(\al_1,\dots,\al_n)\in\Z_+^n$, that is, the set
of subscripts $j\in\{1,\dots,n\}$ for which $\al_j>0$.

Then $\Phi\in \cm_1$, and hence $\Phi\in \cm_p$ for $p\in[1,\iy]$.
\end{theorem}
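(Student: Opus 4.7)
The plan is to establish the stronger property $\check\Phi := \F^{-1}\Phi \in L^1(\R^n)$, from which $\Phi \in \cm_1$ follows immediately by the characterization~\eqref{2.7A} (applied to the absolutely continuous Borel measure $d\mu(x) := \check\Phi(x)\,dx$); the inclusion $\Phi \in \cm_p$ for every $p\in[1,\iy]$ is then a consequence of the embedding $\cm_1\subset\cm_p$ noted after~\eqref{2.7A}. So the real task is to produce an $L^1$-bound for $\check\Phi$.

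The natural tool is an anisotropic Littlewood--Paley decomposition tailored to the product form of the hypotheses. Fix $\psi\in C_0^\iy(\R)$ supported in $\{1/2<|t|<2\}$ with $\sum_{m\in\Z}\psi(2^{-m}t)\equiv 1$ on $\R\setminus\{0\}$, and for $k\in\Z^n$ set
$$
\Phi_k(\xi):=\Phi(\xi)\prod_{j=1}^n\psi(2^{-k_j}\xi_j),
$$
so that $\mathrm{supp}\,\Phi_k\subset B_k:=\prod_j\{2^{k_j-1}<|\xi_j|<2^{k_j+1}\}$ and $\Phi=\sum_{k\in\Z^n}\Phi_k$. It then suffices to prove $\sum_k\|\check\Phi_k\|_{L^1(\R^n)}<\iy$. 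For each dyadic box $B_k$ I would estimate $\|\check\Phi_k\|_{L^1(\R^n)}$ by combining (a) the Cauchy--Schwarz plus Plancherel bound $\|\check\Phi_k\|_{L^1(T_k)}\le|T_k|^{1/2}\|\Phi_k\|_{L^2}$ on the uncertainty box $T_k:=\prod_j\{|x_j|\lesssim 2^{-k_j}\}$ dictated by $B_k$, with (b) the integration-by-parts identity $x^\alpha\check\Phi_k(x)=(-i)^{|\alpha|}\F^{-1}(D^\alpha\Phi_k)(x)$ on the complementary region, where $\alpha\in\Z_2^n$ is chosen coordinatewise depending on which $|x_j|$ exceed the threshold $2^{-k_j}$. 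The hypotheses~\eqref{dv1} and~\eqref{dv2} then supply pointwise bounds on $\Phi$ and on its derivatives $D^\alpha\Phi$ for every $\alpha\in\Z_2^n$; after Leibniz expansion through the cutoffs $\psi(2^{-k_j}\xi_j)$ these translate into explicit geometrically decaying contributions to the box-by-box $L^1$-norms.

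The main obstacle is the delicate bookkeeping of these dyadic sums. The two competing regimes encoded in~\eqref{dv2} -- the factor $|\xi_j|^{1-\delta}$, controlling the singular behaviour of $D_j\Phi$ near the hyperplane $\xi_j=0$ (equivalently, $k_j\to-\iy$), and the factor $(1+|\xi_j|^{2\delta})$, governing the decay of $D_j\Phi$ as $|\xi_j|\to\iy$ (equivalently, $k_j\to+\iy$) -- must each be absorbed separately into a geometric series with ratio $2^{-\delta}$. The strict positivity $\delta\in(0,1)$ is essential here, and the product structure of the hypotheses -- mirrored in the product ansatz for the decomposition -- allows the $n$-dimensional summation to factor as a product of convergent one-dimensional geometric series, one for each coordinate axis and each sign pattern of $k_j$, yielding $\check\Phi\in L^1(\R^n)$.
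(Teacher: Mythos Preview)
The paper does not actually prove this theorem: it is quoted verbatim from~\cite{BDLM}, \S3, Theorem~2, as a preliminary result, and no argument is supplied here. So there is no ``paper's own proof'' to compare against.

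Your outline is the standard and correct route to results of this type, and is presumably close in spirit to the argument in~\cite{BDLM}. The product-type structure of hypotheses~\eqref{dv1}--\eqref{dv2} is exactly what makes a coordinatewise dyadic decomposition work: on each box $B_k$ the weights in~\eqref{dv2} become constants $\sim 2^{\pm\delta k_j}$ or $\sim 2^{-(1\pm\delta)k_j}$, and the choice of $\alpha\in\Z_2^n$ adapted to the sign pattern of $|x_j|-2^{-k_j}$ is precisely what converts the derivative bound into summable decay in both regimes $k_j\to\pm\infty$. Two technical points worth flagging if you intend to write this out in full: (a) your partition $\sum_{m\in\Z}\psi(2^{-m}t)=1$ holds only on $\R\setminus\{0\}$, so the identity $\Phi=\sum_k\Phi_k$ is a.e.\ rather than everywhere---this is harmless for the $L^1$ estimate but you should justify that the series of inverse transforms converges to $\check\Phi$ (e.g.\ via $L^2$); (b) the Leibniz expansion of $D^\alpha\Phi_k$ produces terms $D^\gamma\Phi$ with $\gamma\le\alpha$, each multiplied by derivatives of cutoffs contributing factors $2^{-k_j}$ in the coordinates $j\in\N_\alpha\setminus\N_\gamma$; you must check that the bound~\eqref{dv2} applied with the \emph{smaller} multi-index $\gamma$ still yields the required decay after these extra factors are absorbed. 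Both points are routine, and the strict inequality $0<\delta<1$ is indeed what makes every geometric series converge.
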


Note that more general results on multipliers on $L^1(\R^n)$ can
be found in~\cite{Bes86}-\cite{Trigub}, and in~\cite{BelTri},
Theorem 6.4.2. However, in applications to estimates of
differential operators the functions $\Phi$ are usually rational
functions of $\xi$ and $\bar{\xi}$. In this case
conditions~\eqref{dv1} and~\eqref{dv2} can be verified as readily
as the corresponding conditions in the Mikhlin-Lizorkin theorem.

\subsection{Properties of $l$-quasielliptic systems}

The following properties of $l$-quasielliptic systems are
well-known (see~\cite{BIN}, \cite{Vol}, \cite{VolGin85}).

\begin{prop}\label{dvei_neodn}
Let $l=(l_1,\dots,l_n)\in\N^n$ and let $\Pj$ be an
$l$-quasielliptic system of the form~\eqref{8} with constant
coefficients. Then

(i) the zero set of the system $\{P_j(\xi)\}_1^N$ is compact and
hence is contained in some ball $B_r^n$;

(ii) the following two-sided estimate holds:
\begin{equation}\label{est_2}
  C_1\sum_{k=1}^n |\xi_k|^{2l_k}\le
  \sum_{j=1}^N |P_j^l(\xi)|^2 \le
  C_2\sum_{k=1}^n |\xi_k|^{2l_k},\qquad
  C_1,\ C_2>0,\quad \xi\in\R^n.
\end{equation}
\end{prop}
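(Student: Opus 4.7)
I would prove (ii) first and then deduce (i) from it.

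For the upper bound in (ii), I would use that $P_j^l(\xi)$ is a linear combination of monomials $\xi^\alpha$ with $|\alpha:l|=1$. For each such $\alpha$, the weighted AM--GM (Young) inequality
\[
|\xi^\al|=\prod_{k=1}^n |\xi_k|^{\al_k}=\prod_{k=1}^n \bigl(|\xi_k|^{l_k}\bigr)^{\al_k/l_k}\le \sum_{k=1}^n \frac{\al_k}{l_k}|\xi_k|^{l_k}
\]
gives $|\xi^\alpha|\le\sum_k|\xi_k|^{l_k}$, so $|P_j^l(\xi)|^2\le C\sum_k|\xi_k|^{2l_k}$ after summing in $j$.

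For the lower bound, the clean approach is to exploit the $l$-quasihomogeneity. Under the anisotropic dilation $\delta_\lambda:\xi\mapsto(\lambda^{1/l_1}\xi_1,\dots,\lambda^{1/l_n}\xi_n)$, $\lambda>0$, both the numerator $\sum_j |P_j^l(\xi)|^2$ and the candidate lower bound $\rho(\xi)^2:=\sum_k|\xi_k|^{2l_k}$ are homogeneous of degree $2$, since $P_j^l(\delta_\lambda\xi)=\lambda P_j^l(\xi)$ and $|\lambda^{1/l_k}\xi_k|^{2l_k}=\lambda^2|\xi_k|^{2l_k}$. The ``quasi-sphere'' $\Sigma:=\{\xi\in\R^n:\rho(\xi)=1\}$ is compact and avoids $0$, and by $l$-quasiellipticity $F(\xi):=\sum_j|P_j^l(\xi)|^2>0$ on $\Sigma$, so $F$ attains a positive minimum $C_1>0$ on $\Sigma$. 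Given any $\xi\neq 0$, pick $\lambda$ with $\delta_\lambda\xi\in\Sigma$, namely $\lambda=\rho(\xi)^{-1}$; then $C_1\le F(\delta_\lambda\xi)=\lambda^2 F(\xi)$, which rearranges to $F(\xi)\ge C_1\rho(\xi)^2$, giving~\eqref{est_2}.

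For part (i), write $P_j(\xi)=P_j^l(\xi)+R_j(\xi)$ where $R_j$ is a sum of monomials $\xi^\beta$ with $|\beta:l|<1$. The same Young inequality shows $|\xi^\be|\le 1+\rho(\xi)^{|\be:l|}$, and since $|\be:l|<1$ we have $|R_j(\xi)|=o(\rho(\xi))$ as $\rho(\xi)\to\infty$. Combining with the lower bound from (ii) yields, for some $R>0$,
\[
\sum_{j=1}^N |P_j(\xi)|^2\ge \tfrac{1}{2}C_1\rho(\xi)^2,\qquad \rho(\xi)\ge R,
\]
so the zero set of $\{P_j\}_1^N$ lies inside $\{\rho(\xi)<R\}$. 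Finally, $\rho(\xi)<R$ implies $|\xi_k|<R^{1/l_k}$ for each $k$, hence the zero set is bounded, and being closed it is compact; thus it is contained in some Euclidean ball $B_r^n$.

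The only delicate step is the lower bound in (ii); once the quasihomogeneous scaling $\delta_\lambda$ is set up correctly and one realizes that both sides have the same degree of homogeneity, the argument reduces to a compactness statement on the quasi-sphere $\Sigma$. Everything else --- the upper bound and part (i) --- is then a routine consequence of Young's inequality plus the asymptotic domination of the principal part over lower-order terms.
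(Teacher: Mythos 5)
The paper does not prove Proposition~\ref{dvei_neodn}; it is stated as ``well-known'' with citations to Besov--Il'in--Nikol'ski\u{\i}, Volevich, and Volevich--Gindikin. So there is no in-text proof to compare against, but your argument is the standard quasi-homogeneous scaling proof found in those references, and it is essentially correct: the anisotropic dilation $\delta_\lambda$, the compactness of the quasi-sphere $\Sigma$, and the positivity of $F$ on $\Sigma$ (from $l$-quasiellipticity) give the lower bound; the weighted AM--GM with weights $\al_k/l_k$ summing to $1$ gives the upper bound; and the subprincipal terms are $o(\rho)$ by the same quasi-homogeneity, which yields (i).

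One small imprecision worth fixing: for $|\be:l|=t\in(0,1)$ the weighted AM--GM gives
\[
|\xi^\be|=\prod_{k=1}^n\bigl(|\xi_k|^{l_k}\bigr)^{\be_k/l_k}
=\Bigl(\prod_{k=1}^n\bigl(|\xi_k|^{l_k}\bigr)^{\be_k/(l_k t)}\Bigr)^{t}
\le \Bigl(\tfrac{1}{t}\sum_{k=1}^n|\xi_k|^{l_k}\Bigr)^{t}
\le t^{-t}n^{t/2}\,\rho(\xi)^{t},
\]
so the correct statement is $|\xi^\be|\le C_\be\,\rho(\xi)^{|\be:l|}$ with a dimensional constant, not $|\xi^\be|\le 1+\rho(\xi)^{|\be:l|}$ as you wrote. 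This does not affect the conclusion $|R_j(\xi)|=o(\rho(\xi))$ as $\rho(\xi)\to\infty$, nor any subsequent step, but the explicit bound as written is false in general (e.g.\ for large $n$ or small $t$). With this correction the proof is complete and clean.
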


\section{Elliptic and quasielliptic systems}

\subsection{For which values of $l$ do $l$-quasielliptic systems exist?}

Here we prove Theorem~\ref{exist_quasi_system} which was stated in
the introduction, describing all the sets
$l=(l_1,\dots,l_n)\in\N^n$ for which there exist $l$-quasielliptic
systems.

\begin{proof}[Proof of Theorem~\ref{exist_quasi_system}]
\textsl{Necessity.} Let $n=2N+1$.

 (i) Assume first that all the $l_j$ are odd. We claim that
$P_j^l(x,-\xi)=-P_j^l(x,\xi)$. Let $\al\in\N^n$ and let
$\al_1/l_1+\dots+\al_n/l_n=1$. Since all the $l_j$ are odd, this
equality acquires the form $\al_1k_1+\dots+\al_nk_n=k_0$, where
all $k_j$ are odd, $k_j=2k_j'+1$, $j\in\{0,1,\dots,n\}$. Then
$\al_1+\dots+\al_n=2k_0'+1-\sum_{j=1}^n 2\al_jk_j'=2p+1$.
Therefore,
$$
P_j^l(x,-\xi)=\sum_{|\al:l|=1}
a_{j\al}(x)(-\xi)^\al=\sum_{|\al:l|=1} a_{j\al}(x)(-1)^{|\al|}
\xi^\al=-P_j^l(x,\xi).
$$

Now choosing fixed $x_0\in\Om$ we consider the map
$T:=\(T_1,\dots,T_{2N}\): \S^{2N}\to\R^{2N}$, where
\begin{equation}\label{T}
T_{2j-1}(\xi):=\Re\ P_j^l(x_0,\xi),\qquad
 T_{2j}(\xi):=\Im\ P_j^l(x_0,\xi),\qquad j\in\{1,\dots,N\}.
\end{equation}
This map is odd: $T(-\xi)=-T(\xi)$, and by Theorem~\ref{borsuk} we
have $T(\xi^0)=0$ at some point $\xi^0\in\S^{2N}$. But this
contradicts the assumption that the system $\PjxD$ is
$l$-quasielliptic.

(ii) Suppose that precisely one $l_j$ is even; for example, let
$l_1=2^m l_1'$, and assume that $l_1'$ and the other $l_j$ are
odd, $j\in\{2,\dots,n\}$. We claim that the relation $|\al:l|=1$
implies that $\al_1$ is also divisible by $2^m$, that is,
$\al_1=2^m\al_1'$,\ $\al_1'\in\N$. In fact, $|\al:l|=1$ reduces
to the equality
$$
   \al_1 k_1+2^m(\al_2k_2+\dots+\al_nk_n)=2^mk_0,
$$
where all the $k_j$ are odd. Hence $\al_1=2^m\al_1'$.

Further, let $l':=(l_1',l_2,\dots,l_n):=(l_1',l_2',\dots,l_n')$
and let $|\al:l|=1$. Then $\al_1=2^m\be_1$. Setting
$\eta_1=\xi_1^{2^m}$, $\eta_j=\xi_j$ and $\be_j=\al_j$,
$j\in\{2,\dots,n\}$, we write the polynomial $P_j^l(\xi)$ in the
form
$$
  P_j^l(\xi)=\td{P}_j^{l'}(\eta),\qquad\text{where}\quad
  \td{P}_j^{l'}(\eta):=\sum_{|\be:l'|=1} a_\be\eta^\be,\quad
  j\in\{1,\dots,N\}.
$$
The system $\{\td{P}_j^{l'}(\eta)\}_1^N$ is $l'$-quasielliptic. In
fact, let $\td{P}_j^{l'}(\eta^0)=0$,\ $j\in\{1,\dots,N\}$,\
$\eta^0=(\eta_1^0,\dots,\eta_n^0)\in\R^n\setminus\{0\}$. Since all
the $l_j'$ are odd, it follows by (i) that the
$\td{P}_j^{l'}(\eta)$ are odd,
$\td{P}_j^{l'}(-\eta^0)=-\td{P}_j^{l'}(\eta^0)=0$. Therefore, we
may assume that $\eta_1^0>0$. In this case, setting
$\xi_1^0:=(\eta_1^0)^{1/2^m}$ and $\xi_j^0:=\eta_j^0$ for
$j\in\{2,\dots,n\}$ we obtain $P_j^l(\xi^0)=0$,\
$j\in\{1,\dots,N\}$, so that $\xi^0=0$. The last relation
contradicts the assumption that $\eta^0\neq 0$. Thus, the system
$\{\td{P}_j^{l'}\}_1^N$ is $l'$-quasielliptic, where all the
$l_j'$ are odd. By (i) this is impossible. It follows that there
must be at least two even integers among the $l_j$, that is, we
have proved the theorem for $n=2N+1$.

(iii) Suppose that $n>2N+1$, but there are more than $2N-1$ odd
integers among the $l_j$. We assume without loss of generality
that $l_1,\dots,l_{2N}$ are odd. It is clear that the 'restricted'
system $\{P_j(x_0,\xi_1,\dots,\xi_{2N+1},0,\dots,0)\}_1^N$ is
$l'$-quasielliptic, where $l'=(l_1,\dots,l_{2N+1})$. Therefore, by
(i) and (ii) we arrive at a contradiction. Thus, there are at most
$2N-1$ odd integers among $l_1,\dots,l_n$.

\textsl{Sufficiency.} Let $n=2N+1$,\ $l=(l_1,\dots,l_n)$, where
$l_1,\dots,l_{n-2}$ are odd and $l_{n-1}, l_n$ are even. Then the
system
$$
P_1(\xi)=\xi_1^{l_1}+i\xi_2^{l_2},\quad \dots,\quad P_{N-1}(\xi)=
\xi_{n-4}^{l_{n-4}}+ i\xi_{n-3}^{l_{n-3}},\quad
P_N(\xi)=i\xi_{n-2}^{l_{n-2}}+\xi_{n-1}^{l_{n-1}}+\xi_n^{l_n}
$$
is $l$-quasielliptic and precisely two numbers among the $l_j$ are
even.
\end{proof}

\begin{cor}\label{exist_quasi}
If $n\ge 3$, then $l$-quasielliptic operators exist if and only if
there is at most one odd integer among $l_1,\dots,l_n$.
\end{cor}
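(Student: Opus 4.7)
The plan is to note that Corollary \ref{exist_quasi} is merely the specialization $N=1$ of Theorem \ref{exist_quasi_system}, which has just been proved. Indeed, by Definition \ref{def_quasiell}, an $l$-quasielliptic operator $P(x,D)$ is precisely the $N=1$ case of an $l$-quasielliptic system $\{P_j(x,D)\}_1^N$. Substituting $N=1$ into the hypothesis $n \ge 2N+1$ of Theorem \ref{exist_quasi_system} yields exactly $n \ge 3$, and substituting $N=1$ into the condition that the number of odd integers among $l_1,\dots,l_n$ not exceed $2N-1$ yields that this number not exceed $1$. This is the statement of the corollary.

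No separate argument is therefore required; the entire content is absorbed by Theorem \ref{exist_quasi_system}. If one wished to write a self-contained proof in the one-operator case, sufficiency would be exhibited by the explicit polynomial
\begin{equation*}
P(\xi) = \xi_1^{l_1} + \dots + \xi_{n-1}^{l_{n-1}} + i\xi_n^{l_n}
\end{equation*}
(taking $l_n$ to be the single odd index, or any index if all $l_j$ are even), whose zero set in $\R^n \setminus \{0\}$ is easily seen to be empty. Necessity would follow from a single application of the Borsuk--Ulam Theorem \ref{borsuk}: if at least two of the $l_j$ were odd, one could restrict $P^l$ to the two corresponding coordinate axes and reduce to the situation in parts (i)--(ii) of the proof of Theorem \ref{exist_quasi_system}, where the map $\xi \mapsto (\Re P^l(\xi), \Im P^l(\xi))$ from the appropriate sphere $\S^2$ into $\R^2$ is odd and must vanish somewhere, contradicting quasiellipticity. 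There is no genuine obstacle here; the conceptual work was performed in establishing the general theorem, and the corollary is a direct readout.
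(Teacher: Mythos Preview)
Your proposal is correct and matches the paper's treatment: the corollary is stated immediately after the proof of Theorem~\ref{exist_quasi_system} with no separate argument, precisely because it is the specialization $N=1$, which you identify correctly.

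One small imprecision in your optional sketch: for the necessity direction you write ``restrict $P^l$ to the two corresponding coordinate axes,'' but restricting to only two variables gives a map $\S^1\to\R^2$, to which Borsuk--Ulam in the needed form does not apply. As in part~(iii) of the paper's proof, one must retain \emph{three} coordinates (the two with odd $l_j$ together with one more) so as to obtain an odd map $\S^2\to\R^2$; your later mention of $\S^2$ suggests you had this in mind, but the phrasing should be adjusted.
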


In the homogeneous case Theorem~\ref{exist_quasi_system} reduces
to the following result.

\begin{cor} \label{p1}
Let $l_1=\dots=l_n=l$ and let $\PjxD$ be an elliptic system of the
form~\eqref{8}. If $n\ge 2N+1$, then $l$ is even.
\end{cor}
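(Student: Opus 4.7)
The plan is to derive this statement as a direct specialization of Theorem~\ref{exist_quasi_system} to the isotropic setting. Since $l_1=\dots=l_n=l$, there is a sharp dichotomy for the parities of the exponents: either $l$ is even, in which case the number of odd integers among $l_1,\dots,l_n$ equals $0$, or $l$ is odd, in which case this number equals $n$. An elliptic system of order $l$ is by definition $l$-quasielliptic with $l=(l,\dots,l)$, so Theorem~\ref{exist_quasi_system} applies.

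Under the standing hypothesis $n\ge 2N+1$, the existence of such an $l$-quasielliptic system forces the number of odd integers among $l_1,\dots,l_n$ to be at most $2N-1$. If $l$ were odd, this count would equal $n\ge 2N+1>2N-1$, contradicting the necessary condition from Theorem~\ref{exist_quasi_system}. Therefore $l$ must be even.

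There is essentially no obstacle here; the work has already been carried out in the proof of Theorem~\ref{exist_quasi_system} (the Borsuk--Ulam argument applied to the odd map~\eqref{T} constructed from the real and imaginary parts of the principal symbols). All that remains is to observe that the isotropic case is the extremal case where either none or all of the $l_j$ are odd, and to rule out the latter possibility when $n\ge 2N+1$.
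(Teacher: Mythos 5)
Your proposal is correct and follows exactly the route the paper intends: Corollary~\ref{p1} is stated as a direct consequence of Theorem~\ref{exist_quasi_system}, and your observation that in the isotropic case the number of odd exponents is either $0$ or $n\ge 2N+1>2N-1$ is precisely the specialization needed. Nothing further is required.
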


\begin{remark}
(i) The condition $n\ge 2N+1$ of Theorem~\ref{exist_quasi_system}
is sharp. Specifically, for $n=2N$ and any $l=(l_1,\dots,l_{2N})$
consider the system of operators
\begin{equation} \label{2.2}
P_1(D):=D_1^{l_1}+iD_2^{l_2},\quad\dots,\quad
P_N(D):=D_{2N-1}^{l_{2N-1}}+ iD_{2N}^{l_{2N}}.
\end{equation}
The system~\eqref{2.2} is $l$-quasielliptic. In other words,
Theorem~\ref{exist_quasi_system} does not hold for $n\le 2N$.

(ii) For $N=1$ there is a stronger result than
Theorem~\ref{exist_quasi_system} due to Lopatinski\v{i}: for $n\ge
3$ an elliptic operator $P(D)$ is properly elliptic; in
particular, it has even order (see~\cite{Lop}-\cite{tribel}).
\end{remark}

\subsection{Characterization of $l$-quasielliptic systems by means of
a priori estimates}

We characterize $l$-quasielliptic systems with the help of a
priori estimates in the isotropic Sobolev spaces $\WplOm$. Recall
(see~\cite{BIN}, \cite{Bes}, \cite{Ber}) the following coercivity
criterion for a system $\PjxD$ in $\WplOm$,\ $p\in(1,\iy)$.

\begin{theorem} \emph{(see~\cite{BIN}, Ch.\,III, \pr\,11,
\cite{Bes} and \cite{Ber})} \label{Besov_th} Let
$l=(l_1,\dots,l_n)\in\N^n$, let $\Om$ be a domain in $\R^n$, and
let $\PjxD$ be a system of differential operators of the
form~\eqref{8} in which $a_{j\al}(\cdot)\in L^\iy(\Om)$ for
$|\al:l|\le 1$ and $a_{j\al}(\cdot)\in C(\Om)$ for $|\al:l|=1$.

Then a necessary condition for the system~\eqref{8} to be coercive
in the anisotropic Sobolev space $\WplOm$,\ $p\in(1,\iy)$, is that
it is $l$-quasielliptic; if the domain $\Om$ is bounded, then this
condition is also sufficient.
\end{theorem}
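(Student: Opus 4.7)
The two directions require different techniques, and for sufficiency the hypothesis that $\Om$ is bounded is essential.

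\emph{Necessity.} I would test~\eqref{10} against $l$-quasihomogeneous wave packets concentrated at a putative zero of the principal symbol. Arguing by contradiction, assume $P_j^l(x_0,\xi^0)=0$ for $j\in\{1,\dots,N\}$ at some $x_0\in\Om$ and $\xi^0\in\R^n\setminus\{0\}$; fix $\psi\in C_0^\iy(\R^n)\setminus\{0\}$ supported near the origin and set, for $t>1$,
$$
f_t(x):=\psi\bigl(t^{1/l_1}(x_1-x_0^{(1)}),\dots,t^{1/l_n}(x_n-x_0^{(n)})\bigr)\exp\bigl(i\textstyle\sum_{k=1}^n t^{1/l_k}\xi_k^0 x_k\bigr).
$$
A Leibniz expansion akin to the one in the proof of Proposition~\ref{prop_alg_ineq} gives $\|D^\al f_t\|_{L^p}\asymp t^{|\al:l|}\|\psi\|_{L^p}$ for $|\al:l|\le 1$, while the $l$-quasihomogeneity of $P_j^l$, the vanishing at $(x_0,\xi^0)$, and continuity of the top-order coefficients at $x_0$ force $\|P_j(x,D)f_t\|_{L^p}=o(t)$. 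Dividing~\eqref{10} by $t$ and letting $t\to+\iy$ contradicts the appearance on the left of some $D^\al$ with $|\al:l|=1$.

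\emph{Sufficiency, Step~1: constant coefficients on $\R^n$.} For a constant-coefficient $l$-quasielliptic system the $L^2$ version follows at once from Plancherel and the two-sided bound~\eqref{est_2} of Proposition~\ref{dvei_neodn}. For general $p\in(1,\iy)$, I would choose $\chi\in C^\iy(\R^n)$ vanishing near the compact zero set of $\{P_j\}_1^N$ and equal to $1$ outside a larger ball, and decompose, for each $|\al:l|\le 1$,
$$
\xi^\al=\chi(\xi)\xi^\al\sum_{k=1}^N\frac{\ol{P_k(\xi)}}{\sum_j|P_j(\xi)|^2}\,P_k(\xi)+(1-\chi(\xi))\xi^\al.
$$
The lower bound~\eqref{est_2} yields the denominator control needed to verify the anisotropic Mikhlin--Lizorkin conditions on each factor $\chi(\xi)\xi^\al\,\ol{P_k(\xi)}\bigl(\sum_j|P_j(\xi)|^2\bigr)^{-1}$, placing them in $\cm_p$ for $p\in(1,\iy)$; the residual $(1-\chi)\xi^\al$ is compactly supported and absorbed into $\|f\|_{L^p}$.

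\emph{Sufficiency, Step~2: variable coefficients on bounded $\Om$.} I would localize by a finite partition of unity. Cover $\ol\Om$ by small balls $U_s$ of diameter $<\eps$ and take $\phi_s\in C_0^\iy(U_s)$ with $\sum_s\phi_s\equiv 1$ near $\ol\Om$. For each $s$ fix $x_s\in U_s$ and apply Step~1 to $\phi_s f$, extended by zero to $\R^n$, together with the frozen-coefficient system $\{P_j(x_s,D)\}_1^N$, which is $l$-quasielliptic by continuity of the top-order coefficients. Writing $P_j(x_s,D)(\phi_s f)=P_j(x,D)(\phi_s f)-[P_j(x,D)-P_j(x_s,D)](\phi_s f)$, the top-order coefficients of the difference oscillate by at most $\eta(\eps)\to 0$ on $U_s$ (uniform continuity on $\ol\Om$), so its principal-order contribution is absorbed into the left-hand side once $\eps$ is small; commutators of $P_j$ with $\phi_s$ produce only lower-order terms and enter $C_2\|f\|_{L^p(\Om)}$. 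Summing over $s$ gives~\eqref{10}.

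\emph{Main obstacle.} The principal difficulty is the anisotropic Mikhlin--Lizorkin verification in Step~1 for $p\ne 2$: coordinate-wise derivatives of $\bigl(\sum_k|P_k(\xi)|^2\bigr)^{-1}$ must be controlled separately in each direction $\xi_k$, using that the natural scaling $\xi_k\mapsto t^{1/l_k}\xi_k$ preserves the growth dictated by~\eqref{est_2}; isotropic multiplier theorems do not apply directly. Once Step~1 is in place, the freezing/absorption argument in Step~2 is standard, and necessity reduces to the scaling computation above.
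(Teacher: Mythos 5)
The paper does not supply its own proof of Theorem~\ref{Besov_th}; it cites the result from \cite{BIN}, \cite{Bes}, and \cite{Ber}. Your sketch follows exactly the standard route used in those references: anisotropically scaled wave packets to show necessity, Plancherel together with the anisotropic Mikhlin--Lizorkin multiplier theorem and the two-sided bound~\eqref{est_2} for the constant-coefficient model on $\R^n$, and a freezing-of-coefficients/partition-of-unity localization for the bounded-domain sufficiency. Both the necessity computation (the phase $e^{i\sum_k t^{1/l_k}\xi_k^0 x_k}$ is the right one, since $D^\al$ then produces the factor $t^{|\al:l|}(\xi^0)^\al$, and the quasihomogeneity of $P_j^l$ makes the top-order contribution vanish at $(x_0,\xi^0)$) and the multiplier decomposition are sound.

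Two points deserve to be sharpened in Step~2. First, the commutator terms $[P_j(x,D),\phi_s]f$ are not directly bounded by $\|f\|_{L^p(\Om)}$: they consist of derivatives $D^\be f$ with $0<|\be:l|<1$, and after summing over the covering you still have an intermediate-order sum on the right. To close the argument you must invoke the anisotropic interpolation (Ehrling-type) inequality $\sum_{|\be:l|<1}\|D^\be f\|_p \le \eps\sum_{|\al:l|=1}\|D^\al f\|_p + C_\eps\|f\|_p$ and absorb the $\eps$-term into the left-hand side. Second, the freezing step needs uniform continuity of the top-order coefficients on $\ol\Om$, so one effectively uses $a_{j\al}\in C(\ol\Om)$ rather than the bare $C(\Om)$ appearing in the statement; this is the convention in the cited sources. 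Neither point changes the structure of the argument, which is correct.
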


\begin{remark}
$N$-quasielliptic operators $P(x,D)$ defined in terms of the
Newton polyhedron were introduced and studied in the
book~\cite{VolGin}, Ch.\,I, \pr\,4 and Ch.\,V, \pr\,2. In
particular, an $N$-quasielliptic operator is $l$-quasielliptic if
and only if for every $x\in\Om$ the Newton polyhedron $N\(P(x)\)$
is a simplex with vertices at the origin and at the points
$(0,\dots,l_j,\dots,0)$
 (here $l_j$ is the $j$th component of the
vector). In~\cite{VolGin}, Ch,\,VI, \pr\,4, $N$-quasielliptic
operators were characterized by means of the a priori estimate
\begin{equation}\label{3.4B}
\sum_{\al\in N(P)} \|D^\al f\|_{L^2(\Om)}\le C_1
\|P(x,D)f\|_{L^2(\Om)} + C_2 \|f\|_{L^2(\Om)}, \qquad f\in
C_0^\iy(\Om),
\end{equation}
which develops the coercivity criterion in
$\overset{\circ}{W}\rule{0pt}{2mm}_2^l(\Om)$ significantly.
\end{remark}

A coercivity criterion in $\WplOmInf$ was obtained in~\cite{Mal2},
\pr\,5, Theorem 4. This result (as well as
Proposition~\ref{mal_neodn}) implies that in general an
$l$-quasielliptic system is not coercive in $\WplOm$ for
$p=1,\iy$. However, it is weakly coercive in $\WplOmInf$
(see~\cite{Mal2}, \pr\,5) and in
$\overset{\circ}{W}\rule{0pt}{2mm}_1^l(\Om)$ (see~\cite{BDLM},
\pr\,4, Theorem 3 and~\cite{dan2004}, where this was proved for
operators with constant coefficients).

\begin{theorem} \emph{(see~\cite{BIN} and \cite{Mal2})} \label{th2}
Let $\Om$ be a domain in $\R^n$, let $l=(l_1,\dots,l_n)\in\N^n$
and let $\PjxD$ be an $l$-quasielliptic system of operators of the
form~\eqref{8}. Suppose that $a_{j\al}(\cdot)\in L^{\infty}(\Om)$
for $|\al:l|\le 1$ and $a_{j\al}(\cdot)\in C(\Om)$ for
$|\al:l|=1$,\ $j\in\{1,\dots,N\}$. Then for every $\e>0$ there is
a constant $C_\e>0$ independent of $p\in(1,\iy]$ (and in the case
of operators with constant coefficients independent of
$p\in[1,\iy]$) such that the following estimate holds:
\begin{equation}\label{4.3}
\sum_{|\al:l|<1}\|D^\al f\|_{L^p(\Om)} \le
\e\sum^N_{j=1}\|P_j(x,D)f\|_{L^p(\Om)} + C_\e\|f\|_{L^p(\Om)},
\qquad f\in C_0^\iy(\Om).
\end{equation}
In particular, an $l$-quasielliptic system $\PjxD$ of the
form~\eqref{8} is weakly coercive in the space $\WplOm$ for
$p\in(1,\iy]$ (for $p\in[1,\iy]$ in the case of operators with
constant coefficients).
\end{theorem}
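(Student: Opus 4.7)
The plan is to deduce the refined estimate~\eqref{4.3} from the unweighted weak coercivity estimate~\eqref{10'}, which is already available for each $p$ in the prescribed range from the results of \cite{BIN}, \cite{Bes}, \cite{Mal1}, \cite{Mal2}, \cite{BDLM}. The passage from the fixed constant $C_1$ in~\eqref{10'} to the arbitrary $\varepsilon$ in~\eqref{4.3} is a soft absorption device: interpolation in the middle range $p\in(1,\iy)$ and anisotropic scaling at the endpoints.

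For $p\in(1,\iy)$ I would route through the full coercivity of Theorem~\ref{Besov_th}, namely $\|f\|_{W_p^l(\Om)}\le C_1\sum_j\|P_j(x,D)f\|_{L^p}+C_2\|f\|_{L^p}$, combined with the anisotropic multiplicative (Gagliardo--Nirenberg-type) inequality
$$
\|D^\al f\|_{L^p}\le C_{\al,l}\,\|f\|_{L^p}^{1-|\al:l|}\,\|f\|_{W_p^l(\Om)}^{|\al:l|},\qquad |\al:l|<1,
$$
and Young's inequality $a^\theta b^{1-\theta}\le\varepsilon a+C_\varepsilon b$. Chaining these three ingredients delivers~\eqref{4.3} on $(1,\iy)$ with constants depending continuously on $p$. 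For the endpoint $p=\iy$ (and $p=1$ in the constant coefficient case), coercivity is unavailable, but~\eqref{10'} still holds by the cited criteria. To insert the $\varepsilon$ I would apply~\eqref{10'} to the rescaled test $f_\mu(x):=f(\mu^{1/l_1}x_1,\dots,\mu^{1/l_n}x_n)$, for which $\|D^\al f_\mu\|_\iy=\mu^{|\al:l|}\|D^\al f\|_\iy$ and the principal part $P_j^l$ scales by the factor $\mu$, whereas the lower-order contributions to $P_jf_\mu$ scale by $\mu^{|\be:l|}$ with $|\be:l|<1$. Dividing by $\mu^{|\al:l|}$ and taking $\mu$ small converts the prefactor of $\sum_j\|P_jf\|_\iy$ into an arbitrary $\varepsilon$, the cost being $C_\varepsilon$ of order $\mu^{-|\al:l|}$; lower-order corrections generated along the way are absorbed into the $\|f\|_\iy$ term by a further application of~\eqref{10'}. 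For variable coefficients on a general $\Om$ the rescaling step must be preceded by the standard localize-and-freeze-coefficients reduction to the model constant-coefficient case on $\R^n$, which is the same pattern underlying the derivation of~\eqref{10'} in~\cite{Mal2}. The \emph{in particular} clause of the theorem is immediate from~\eqref{4.3} by fixing any convenient $\varepsilon$.

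The main obstacle I anticipate is not in any single step but in the uniformity of $C_\varepsilon$ across the entire range $p\in(1,\iy]$ (respectively $[1,\iy]$ in the constant coefficient case): the interpolation constants in the middle range depend mildly on $p$, while the scaling constants at the endpoint depend only on $n$ and $l$ and not on $p$, so pasting the two regimes into a single $C_\varepsilon$ requires tracking the $p$-dependence at each stage. A secondary subtlety is the freezing-coefficients reduction in the variable-coefficient $p=\iy$ case, where the lower-order corrections produced by the dilation must be reabsorbed by iteration of~\eqref{10'} to close the argument.
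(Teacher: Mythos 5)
This is a citation theorem in the paper (attributed to \cite{BIN} and \cite{Mal2}), so there is no in-text proof to compare against. Assessing your proposal on its own merits, the $p\in(1,\iy)$ branch is reasonable in the setting where coercivity \eqref{10} is actually available (bounded $\Om$, per Theorem~\ref{Besov_th}): Gagliardo--Nirenberg plus Young then delivers the $\e$ in the standard way. The trouble is the endpoint argument.

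The rescaling step at $p=\iy$ has a genuine gap. Applying \eqref{10'} to $f_\mu(x)=f(\mu^{1/l_1}x_1,\dots,\mu^{1/l_n}x_n)$ gives
\begin{equation*}
\sum_{|\al:l|<1}\mu^{|\al:l|}\|D^\al f\|_\iy
\le C_1\sum_{j=1}^N\Bigl\|\mu P_j^l(D)f+\sum_{|\be:l|<1}a_{j\be}\mu^{|\be:l|}D^\be f\Bigr\|_\iy + C_2\|f\|_\iy,
\end{equation*}
and the corrections you need to dispose of are $C_1\sum_{j,\be}|a_{j\be}|\,\mu^{|\be:l|}\|D^\be f\|_\iy$. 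These are \emph{not} absorbed into $\|f\|_\iy$: for $\be$ with $|\be:l|=|\al:l|$ the factor $\mu^{|\be:l|-|\al:l|}=1$, so after dividing the row of interest by $\mu^{|\al:l|}$ the correction re-enters with coefficient $C_1|a_{j\be}|$, which is not small; for $|\be:l|<|\al:l|$ it even blows up. Absorbing them into the left-hand side would require $C_1\sum_j|a_{j\be}|<1$, which is not generically the case, and ``a further application of~\eqref{10'}'' brings back the original constant $C_1$ in front of $\sum_j\|P_jf\|_\iy$ rather than an $\e$, so the iteration does not close. Scaling works cleanly only when the operators have no intermediate-order terms, which is exactly the situation where the claim is trivial. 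In addition, for a general domain $\Om$ the dilated test function $f_\mu$ need not be supported in $\Om$, and the proposed ``localize and freeze coefficients'' reduction is itself a nontrivial step in the $L^\iy$ setting (it is essentially the content of the cited work \cite{Mal1}, \cite{Mal2}); asserting it as a black box does not fill the gap.

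For constant coefficients on $\R^n$ the actual mechanism that yields the $\e$-version at $p=1,\iy$ is not scaling but multiplier truncation: by Proposition~\ref{Q=MP+N} one writes, for a cutoff $\chi_R$ vanishing on $B_R^n$ and equal to $1$ far out,
\begin{equation*}
\xi^\al=\sum_{j=1}^N\chi_R(\xi)\,\frac{\xi^\al\,\ol{P_j(\xi)}}{\sum_q|P_q(\xi)|^2}\,P_j(\xi)+\bigl(1-\chi_R(\xi)\bigr)\xi^\al,
\end{equation*}
and uses that $|\al:l|<1$ together with the two-sided bound of Proposition~\ref{dvei_neodn}(ii) makes the first multipliers decay at infinity, so their $\cm_1$-norms (controlled via a quantitative form of Theorem~\ref{th1}) tend to $0$ as $R\to\iy$, while the second term has a symbol supported in a ball and contributes only to $C_\e\|f\|$. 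This is the route that actually produces the arbitrary $\e$ at the endpoints; your dilation argument does not.
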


In the next theorem we show that for every $\e>0$ and any
$p\in(1,\iy]$ inequality~\eqref{4.3} characterizes elliptic
systems in the class of weakly coercive systems in $\WplOm$ with
constant-coefficient principal parts.

\begin{prop} \label{prop_3_8}
Let $l_1=\dots=l_n=l$, let $\Om$ be a domain in $\R^n$, and
$\PjxD$ a system of operators of the form~\eqref{8} whose
principal parts have constant coefficients, so that
$P_j^l(x,D)=P_j^l(D)$, let $a_{j\al}(\cdot)\in L^\iy(\Om)$ for
$|\al:l|< 1$,\ $j\in\{1,\dots,N\}$, and let $p\in(1,\iy]$. Then
the system of operators $\PjxD$ is elliptic if and only if the
estimate~\eqref{4.3} holds for each $\e>0$ with some constant
$C_\e>0$.

If the operators $P_j(x,D)$, $j\in\{1,\dots,N\}$, have constant
coefficients, then this criterion also holds for $p=1$.
\end{prop}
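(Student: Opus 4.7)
The forward direction (ellipticity $\Rightarrow$ estimate \eqref{4.3} for every $\e > 0$) is supplied directly by Theorem~\ref{th2}: in the variable-coefficient case it gives the range $p \in (1,\iy]$, and the constant-coefficient clause of the same theorem adds $p = 1$. Only the converse requires work, and I would prove it by contraposition and a test-function computation.

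Suppose the system is not elliptic, so that there exist $x_0 \in \Om$ and $\xi^0 \in \R^n \setminus \{0\}$ with $P_j^l(\xi^0) = 0$ for every $j$ (the principal parts are constant-coefficient by hypothesis). After relabeling I may assume $\xi^0_1 \neq 0$. The plan is to substitute the family
\begin{equation*}
f_t(x) := \psi(x)\,e^{it \langle x, \xi^0\rangle}, \qquad t \ge 1,
\end{equation*}
into \eqref{4.3}, where $\psi \in C_0^\iy(\Om)$ is a fixed nonzero cutoff supported in a small ball around $x_0$. Comparing $t$-orders on the two sides, the vanishing of $P_j^l$ at $\xi^0$ will force the right-hand side to grow strictly slower than the left-hand side, producing the desired contradiction.

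On the left, the term with multi-index $\al_0 = (l-1, 0, \dots, 0)$ satisfies $(\xi^0)^{\al_0} = (\xi^0_1)^{l-1} \neq 0$, and a direct Leibniz computation gives
\begin{equation*}
\|D^{\al_0} f_t\|_{L^p(\Om)} \ge |\xi^0_1|^{l-1}\|\psi\|_{L^p} \, t^{l-1} - O(t^{l-2}).
\end{equation*}
On the right, expanding as in the proof of Proposition~\ref{prop_alg_ineq} gives
\begin{equation*}
P_j(x,D) f_t = e^{it\langle x,\xi^0\rangle} \sum_{\ga} \frac{1}{\ga!} (\partial_\xi^\ga P_j)(x, t\xi^0)\, D^\ga \psi(x).
\end{equation*}
The $\ga = 0$ contribution equals $P_j(x, t\xi^0)\psi(x)$, whose $t^l$-piece is $t^l P_j^l(\xi^0)\psi = 0$, leaving only an $O(t^{l-1})$ remainder on $\operatorname{supp}\psi$ because the lower-order coefficients lie in $L^\iy(\Om)$. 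For $|\ga| \ge 1$ the factor $\partial_\xi^\ga P_j(x, t\xi^0)$ is automatically $O(t^{l-|\ga|}) \le O(t^{l-1})$. Hence $\|P_j(x,D) f_t\|_{L^p(\Om)} \le C_j\, t^{l-1}$ with constants independent of $t$, while $\|f_t\|_{L^p} = \|\psi\|_{L^p}$.

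Plugging these bounds into \eqref{4.3}, dividing by $t^{l-1}$ and sending $t \to \iy$ yields $|\xi^0_1|^{l-1}\|\psi\|_{L^p} \le \e \sum_{j=1}^N C_j$, valid for every $\e > 0$; hence $|\xi^0_1|^{l-1}\|\psi\|_{L^p} = 0$, which is absurd. The argument is uniform in $p$, so it handles $p \in (1,\iy]$ under the stated hypotheses and, in the constant-coefficient case, also $p = 1$. The main obstacle is the $t$-order bookkeeping inside $P_j(x,D) f_t$, where one has to exploit both the constant-coefficient principal part and the local boundedness of the lower-order coefficients to turn $P_j^l(\xi^0) = 0$ into a genuine drop from order $t^l$ to order $t^{l-1}$.
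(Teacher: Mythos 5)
Your proof is correct and follows essentially the same strategy as the paper's: substitute test functions $\psi(x)e^{it\langle x,\xi^0\rangle}$ into \eqref{4.3}, track powers of $t$, and let $t\to\iy$ then $\e\to 0$. The only organizational difference is that the paper first uses a triangle-inequality manipulation to replace $P_j(x,D)$ by its constant-coefficient principal part $P_j^l(D)$ in \eqref{4.3} before running the test-function computation, whereas you bound $\|P_j(x,D)f_t\|_p\le C_jt^{l-1}$ directly (exploiting $P_j^l(\xi^0)=0$ for the $\ga=0$ term and $|\ga|\ge 1$ for the rest, with $a_{j\al}\in L^\iy$ controlling the lower-order pieces); this is a mild streamlining of the same argument, not a different route.
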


\begin{proof} The necessity follows from Theorem~\ref{th2}.

\textsl{Sufficiency.} Suppose the estimate~\eqref{4.3} holds.
Setting
$$
\wt{P}_j(x,D):=P_j(x,D)-P_j^l(D),
$$
from the triangle inequality we obtain
\begin{gather}
\notag \e'\sum \|P_j^l(D)f\|_p+C_{\e'}\|f\|_p \ge \e'\sum
\|P_j(x,D)f\|_p \\ \label{prom_1} -\e'\sum \|\td
P_j(x,D)f\|_p+C_{\e'}\|f\|_p\ ,\quad \e':=\frac{\e}{\e+1}\in(0;1).
\end{gather}
Taking into account the fact that~\eqref{4.3} holds with
$\sum_{j=1}^N \|\wt{P}_j(x,D)f\|_p$ on the left-hand side,
from~\eqref{prom_1} we obtain
\begin{gather}
\notag \e'\sum \|P_j^l(D)f\|_p+C_{\e'}\|f\|_p \ge \e'\sum
\|P_j(x,D)f\|_p-\e'\[\e'
\sum \|P_j(x,D)f\|_p+ C_{\e'}\|f\|_p\]\\
\label{prom_2} +C_{\e'}\|f\|_p=(1-\e')\[\e' \sum
\|P_j(x,D)f\|_p+C_{\e'}\|f\|_p\]\!\ge\!
(1-\e')\sum_{|\al|<l}\|D^\al f\|_p.
\end{gather}
Dividing both sides of~\eqref{prom_2} by $1-\e'>0$ and taking into
account the relation $\e'/(1-\e')=\e$ we derive the
estimate~\eqref{4.3} with $P_j^l(D)$ in place of $P_j(x,D)$:
\begin{equation}\label{eq_6}
\sum_{|\al|<l} \|D^\al f\|_p \le \e\sum_{j=1}^N
\|P_j^l(D)f\|_p+C_\e \|f\|_p\ ,\qquad f\in C^\infty_0(\Om).
\end{equation}

Let $\phi\in C_0^\iy(\Om)$,\ $\phi\not\equiv 0$. We set
$f(x):=\phi(x)e^{i t\langle\xi,x\rangle}$,\ $x\in\Om$,
in~\eqref{eq_6}. Since
$$
(D^\ga f)(x)=t^{|\ga|}\xi^\ga f(x)+t^{|\ga|-1} e^{it\langle
x,\xi\rangle} \sum_{k=1}^n \frac{\partial\xi^\ga}{\partial \xi_k}
D_k\phi(x)+o\(t^{|\ga|-1}\)
$$
by Leibniz's formula~\eqref{herm}, estimate~\eqref{eq_6} implies
the inequality
\begin{gather}
\notag \sum_{|\al|<l} \|t^{|\al|}\xi^\al f
+o(t^{l-1})\|_p-C_\e\|f\|_p \\ \label{O_t} \le \e \sum_{j=1}^N
\left\| t^l P_j^l(\xi)f+ t^{l-1}\[e^{it\langle
x,\xi\rangle}\sum_{|\al|=l} a_{j\al} \sum_{k=1}^n
\frac{\partial\xi^\al}{\partial\xi_k} D_k\phi\]+
o(t^{l-1})\right\|_p.
\end{gather}

Let $P_j^l(\xi^0)=0$,\ $j\in\{1,\dots,N\}$, for some
$\xi^0=(\xi_1^0,\dots,\xi_n^0)\in\R^n\setminus\{0\}$. Then
$\xi_s^0\neq 0$ for some $s\in\{1,\dots,n\}$. Setting $\xi=\xi^0$,
from~\eqref{O_t} we obtain
\begin{equation} \label{O_tt}
t^{l-1}|\xi_s^0|^{l-1}\|\phi\|_p+o(t^{l-1})\le \e
Ct^{l-1}|\xi^0|^{l-1}\|\phi\|_{W_p^1}+o(t^{l-1})\quad\text{as}\quad
t\to +\iy.
\end{equation}
Choosing $\e>0$ small enough, dividing both sides of~\eqref{O_tt}
by $t^{l-1}$ and passing to the limit as $t\to\iy$ we arrive at a
contradiction. Hence the system $\PjxD$ is elliptic.
\end{proof}

\subsection{Systems of principal type}

Let $\Om$ be a domain in $\R^n$ and $p\in[1,\iy]$. We denote by
$L^0_{p,\Om}(P_1,\dots,P_N)$ the space of all differential
operators $Q(x,D)$ subordinated to a system $\PjxD$, that is,
satisfying the estimate~\eqref{200}.
Following~\cite{Horm},Ch,\.\,II, \pr\,2.7, we recall the
definition.

\begin{definition}
A system of differential operators $\PjxD$ of the form~\eqref{8},
with $L^\iy(\Om)$-coefficients is called a \emph{system of
principal type in $L^p(\Om)$} if $\PjxD$ has the same force as an
arbitrary system $\{R_j(x,D)\}_1^N$ with $L^\iy(\Om)$-coefficients
and the same principal part, that is, the spaces
$L^0_{p,\Om}(P_1,\dots,P_N)$ and $L^0_{p,\Om}(R_1,\dots,R_N)$
coincide.
\end{definition}

The definition of systems of principal type readily implies the
following simple result.

\begin{prop} \label{gl_tip_coerc}
A system $\PjxD$ of differential operators of principal type in
$L^p(\Om)$ is weakly coercive in $\WplOm$,\ $p\in[1,\iy]$.
\end{prop}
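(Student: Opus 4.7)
The plan is to exploit the defining property of a system of principal type: replacing any $P_j(x,D)$ by another operator with the same $l$-principal part must preserve the space of subordinated operators. The weak coercivity estimate~\eqref{10'} requires that every monomial $D^\al$ with $|\al:l|<1$ lies in $L^0_{p,\Om}(P_1,\dots,P_N)$. Since $|\al:l|<1$ means $D^\al$ is a strictly lower-order term with respect to the weight $l$, it can be absorbed into $P_1(x,D)$ without altering $P_1^l(x,D)$.

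More precisely, fix any multi-index $\al$ with $|\al:l|<1$ and set
\begin{equation*}
R_1(x,D):=P_1(x,D)+D^\al,\qquad R_j(x,D):=P_j(x,D)\quad\text{for }j\in\{2,\dots,N\}.
\end{equation*}
The operators $R_j$ have $L^\iy(\Om)$-coefficients and by construction the same $l$-principal parts as the $P_j$. Hence, by the principal-type hypothesis,
\begin{equation*}
L^0_{p,\Om}(R_1,\dots,R_N)=L^0_{p,\Om}(P_1,\dots,P_N).
\end{equation*}
Since $R_1$ trivially belongs to $L^0_{p,\Om}(R_1,\dots,R_N)$, it also belongs to $L^0_{p,\Om}(P_1,\dots,P_N)$, so there exist constants $C_1,C_2>0$ with
\begin{equation*}
\|P_1(x,D)f+D^\al f\|_{L^p(\Om)}\le C_1\sum_{j=1}^N\|P_j(x,D)f\|_{L^p(\Om)}+C_2\|f\|_{L^p(\Om)},\qquad f\in C_0^\iy(\Om).
\end{equation*}

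The triangle inequality $\|D^\al f\|_p\le\|P_1f+D^\al f\|_p+\|P_1f\|_p$ then yields
\begin{equation*}
\|D^\al f\|_{L^p(\Om)}\le(C_1+1)\sum_{j=1}^N\|P_j(x,D)f\|_{L^p(\Om)}+C_2\|f\|_{L^p(\Om)}.
\end{equation*}
Summing these estimates over the finitely many multi-indices $\al$ with $|\al:l|<1$ produces~\eqref{10'}, which is weak coercivity in $\WplOm$. There is no real obstacle here; the whole argument is essentially a tautology once the principal-type definition is unpacked, and the proof works uniformly for every $p\in[1,\iy]$.
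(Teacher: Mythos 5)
Your proof is correct and takes essentially the same route as the paper: both arguments perturb $P_1$ by $D^\al$ (which leaves the principal part unchanged), invoke the principal-type hypothesis to conclude $P_1+D^\al\in L^0_{p,\Om}(P_1,\dots,P_N)$, and then extract the estimate for $D^\al$. The only cosmetic difference is that the paper phrases the final step via the linearity of the space $L^0_{p,\Om}(P_1,\dots,P_N)$, while you unpack the same fact into an explicit triangle-inequality computation.
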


\begin{proof}
Let $D^\al$ be a differential monomial, $|\al:l|<1$, and let
$\wt{P_1}:=P_1+D^\al$. Since
\begin{gather*}
P_1\in L^0_{p,\Om}(P_1,\dots,P_N),\qquad \wt{P_1}\in
L^0_{p,\Om}(\wt{P_1},\dots,P_N),\\ L^0_{p,\Om}(P_1,\dots,P_N)=
L^0_{p,\Om}(\wt{P_1},\dots,P_N)
\end{gather*}
by assumption, we have
$$
 \wt{P_1}-P_1=D^\al\in L^0_{p,\Om}(P_1,\dots,P_N).
$$
\end{proof}

By Theorem~\ref{th2} the 'force' of an elliptic system in
$L^p(\Om)$,\ $p\in[1,\iy]$, does not change under perturbations by
operators of smaller order. It turns out that this property
singles out the elliptic systems in $L^p(\R^n)$ among the totality
of weakly coercive systems in $\WplRn$ that have principal parts
with constant coefficients.

\begin{prop}\label{har_ellipt}
Let $\PjxD$ be a system of order $l$ with $L^\iy(\Om)$
coefficients such that the principal parts of the operators
$P_j(x,D)$ have constant coefficients: $P^l_j(x,D)\equiv
P^l_j(D)$.

Then the system $\PjxD$ is elliptic if and only if it is a system
of principal type in $L^p(\R^n)$ for $p\in(1,\iy]$ (or for
$p\in[1,\iy]$ in the case of constant coefficients).
\end{prop}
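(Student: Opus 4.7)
The plan is to reduce both implications to Proposition~\ref{prop_3_8}, which characterizes ellipticity among systems with constant-coefficient principal parts by means of the $\e$-form \eqref{4.3} of weak coercivity.

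\textbf{Necessity.} Assume $\{P_j^l(D)\}$ is elliptic and let $\{R_j(x,D)\}_1^N$ be any system with the same principal part and $L^\iy$ coefficients. Each difference $P_j-R_j$ has order $\le l-1$, so $\|(P_j-R_j)f\|_p\le C\sum_{|\beta|<l}\|D^\beta f\|_p$. By Theorem~\ref{th2} applied to the elliptic system $\{P_j\}$, for every $\eta>0$ one has $\sum_{|\beta|<l}\|D^\beta f\|_p\le \eta\sum_k\|P_k f\|_p+C_\eta\|f\|_p$. Taking $\eta$ so small that $NC\eta<\tfrac12$ and absorbing yields $\sum_j\|P_j f\|_p\le 2\sum_j\|R_j f\|_p+C'\|f\|_p$, and by symmetry the reverse bound also holds. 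Plugging these two inequalities into the defining estimate~\eqref{200} gives $L^0_{p,\R^n}(P_1,\dots,P_N)=L^0_{p,\R^n}(R_1,\dots,R_N)$, which is exactly the principal type property.

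\textbf{Sufficiency.} Assume $\{P_j(x,D)\}$ is of principal type. The constant-coefficient system $\{P_j^l(D)\}_1^N$ has the same principal part as $\{P_j\}$, so the hypothesis yields $L^0_{p,\R^n}(P_1,\dots,P_N)=L^0_{p,\R^n}(P_1^l,\dots,P_N^l)$. Proposition~\ref{gl_tip_coerc} ensures that $\{P_j\}$ is weakly coercive, and hence $D^\alpha\in L^0_{p,\R^n}(P_1^l,\dots,P_N^l)$ for every $|\alpha|<l$. Summing over $\alpha$ produces constants $C_1,C_2$ with
\begin{equation*}
\sum_{|\alpha|<l}\|D^\alpha g\|_p\le C_1\sum_{j=1}^N\|P_j^l(D)g\|_p+C_2\|g\|_p,\qquad g\in C_0^\iy(\R^n).
\end{equation*}
I would then apply this to the rescaled function $g_\lambda(x):=g(\lambda x)$, $\lambda>0$. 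Using $\|D^\alpha g_\lambda\|_p=\lambda^{|\alpha|-n/p}\|D^\alpha g\|_p$, $\|P_j^l(D)g_\lambda\|_p=\lambda^{l-n/p}\|P_j^l(D)g\|_p$ and $\|g_\lambda\|_p=\lambda^{-n/p}\|g\|_p$, the factor $\lambda^{-n/p}$ cancels and one gets $\sum_{|\alpha|<l}\lambda^{|\alpha|}\|D^\alpha g\|_p\le C_1\lambda^l\sum_j\|P_j^l(D)g\|_p+C_2\|g\|_p$. For each fixed $\alpha$ with $1\le|\alpha|<l$, dropping the other terms on the left, dividing by $\lambda^{|\alpha|}$, and choosing $\lambda=(\e/C_1)^{1/(l-|\alpha|)}$ produces $\|D^\alpha g\|_p\le \e\sum_j\|P_j^l(D)g\|_p+C'_\e\|g\|_p$ for every $\e>0$ (the case $|\alpha|=0$ is trivial).

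To conclude, $P_j-P_j^l$ has order $<l$ with $L^\iy$ coefficients, so $\|P_j^l(D)g\|_p\le \|P_j(x,D)g\|_p+C\sum_{|\beta|<l}\|D^\beta g\|_p$. Inserting this into the preceding $\alpha$-bounds, summing over $|\alpha|<l$, and absorbing the $\sum_{|\beta|<l}\|D^\beta g\|_p$-term for $\e$ sufficiently small delivers the full $\e$-form \eqref{4.3} with $P_j(x,D)$ on the right-hand side. Proposition~\ref{prop_3_8} then yields ellipticity of $\{P_j\}$. The decisive step is the rescaling, which relies on the principal parts having \emph{constant} coefficients (so they are exactly $l$-homogeneous and interact cleanly with dilations) and on the ambient domain being $\R^n$ (so $g_\lambda\in C_0^\iy(\R^n)$); the extra case $p=1$ in the constant-coefficient setting follows identically, since both Theorem~\ref{th2} and Proposition~\ref{prop_3_8} cover it.
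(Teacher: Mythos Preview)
Your proof is correct, and the necessity direction is just a careful unfolding of the paper's one-line ``follows from Theorem~\ref{th2}''. The sufficiency argument, however, takes a genuinely different route from the paper's.

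The paper argues sufficiency by direct contradiction: assuming non-ellipticity, it picks $\xi^0$ with $P_j^l(\xi^0)=0$, observes (via Proposition~\ref{gl_tip_coerc} and the principal-type hypothesis) that $D_1\in L^0_{p,\R^n}(P_1^l,\dots,P_N^l)$, and then invokes Proposition~\ref{prop_alg_ineq} to get the algebraic bound $|\xi_1|\le C_1\sum_j|P_j^l(\xi)|+C_2$, which immediately fails along the ray $t\xi^0$. That is a four-line proof once the ingredients are in place. Your route instead upgrades the bare weak-coercivity bound for $\{P_j^l\}$ to the full $\e$-form~\eqref{4.3} by the dilation trick $g\mapsto g(\lambda\,\cdot)$, then absorbs the lower-order part $P_j-P_j^l$ and appeals to Proposition~\ref{prop_3_8}. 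This is longer but avoids Proposition~\ref{prop_alg_ineq} entirely, and the dilation step is a useful self-contained mechanism for converting scale-invariant estimates into small-constant estimates whenever the top-order part is homogeneous with constant coefficients and the domain is dilation-invariant --- precisely the standing hypotheses here. So the paper's approach buys brevity and transparency at the symbol level; yours buys a reusable functional-analytic technique and keeps everything at the level of $L^p$ estimates.
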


\begin{proof}
The necessity follows from Theorem~\ref{th2}.

\textsl{Sufficiency.} Suppose that the system $\PjxD$ is of
principal type, but not elliptic. We may assume without loss of
generality that $P_j^l(\xi^0)=0$,\ $j\in\{1,\dots,N\}$, for
$\xi^0=(1,0,\dots,0)$. Since the system $\PjxD$ is weakly coercive
in $\WplRn$ (Proposition~\ref{gl_tip_coerc}), it follows that
$D_1\in L^0_{p,\R^n}(P_1,\dots,P_N)
=L^0_{p,\R^n}(P_1^l,\dots,P_N^l)$. By
Proposition~\ref{prop_alg_ineq}, this yields the inequality
$$
 |\xi_1|\le C_1 \sum_{j=1}^N |P_j^l(\xi)|+C_2,\qquad \xi\in\R^n,
$$
which fails for $\xi=\xi^0 t$ and large $t>0$. Hence the system
$\PjxD$ is elliptic.
\end{proof}

The following assertion is a simple generalization of
H\"ormander's result in~\cite{Horm}, Ch.\,II, \pr\,2.7, Theorem
2.3 to the case $N>1$.

\begin{prop} \label{har_gl_tip}
Let $\Om$ be a bounded domain in $\R^n$. A system of differential
polynomials $\Pj$ of order $l$ is a system of principal type in
$L^2(\Om)$ if and only if
$$
  \(\nabla P_1^l,\dots,\nabla P_N^l\)(\xi)\neq 0, \qquad
  \xi\in\R^n\setminus\{0\}.
$$
\end{prop}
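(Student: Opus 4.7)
The plan is to extend H\"ormander's argument for $N=1$ cited in the statement to arbitrary $N$, working throughout with the weight-function characterization of $L^2$-subordination on a bounded domain. Recall that H\"ormander's weight
$$
 \tilde{P}(\xi)^2:=\sum_\al |P^{(\al)}(\xi)|^2
$$
satisfies the following criterion (see~\cite{Horm}, Ch.\,II, \pr\,2.3): a polynomial $Q(D)$ belongs to $L^0_{2,\Om}(P_1,\dots,P_N)$ if and only if $\tilde{Q}(\xi)/\tilde{\cp}(\xi)$ is bounded on $\R^n$, where $\tilde{\cp}(\xi)^2:=\sum_{j=1}^N \tilde{P_j}(\xi)^2$. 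The whole argument will proceed through this single criterion.

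First I would reformulate the principal-type property in weight-function language: the system $\Pj$ is of principal type in $L^2(\Om)$ if and only if $\tilde{\cp}$ is comparable on $\R^n$ to the weight of any system with the same principal parts. Since every lower-order perturbation $R=(R_1,\dots,R_N)$ with $\deg R_j\le l-1$ satisfies $\tilde{R}(\xi)\le C(1+|\xi|)^{l-1}$, a standard Minkowski-type estimate for $\tilde{\,\cdot\,}$ reduces this comparability, uniformly over all such $R$, to the single quantitative lower bound
$$
 \tilde{\cp^l}(\xi)\ge c_0(1+|\xi|)^{l-1},\qquad |\xi|\ge R_0,
$$
for some constants $c_0,R_0>0$, where $\tilde{\cp^l}$ denotes the weight function of the principal-part system $\{P_j^l\}_1^N$. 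It then remains to show that this bound is equivalent to the algebraic condition $(\nabla P_1^l,\dots,\nabla P_N^l)(\xi)\ne 0$ on $\R^n\setminus\{0\}$.

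For sufficiency, I would observe that $\tilde{\cp^l}(\xi)^2$ contains among its summands (those with $|\al|=1$) the quantity $\sum_{j=1}^N|\nabla P_j^l(\xi)|^2$; the $(l-1)$-homogeneity of each $\nabla P_j^l$ together with compactness of $\S^{n-1}$ then yields $\tilde{\cp^l}(\xi)\ge c(1+|\xi|)^{l-1}$ directly. For necessity, suppose $\nabla P_j^l(\xi^0)=0$ for all $j$ at some $\xi^0\ne 0$. Euler's identity $l\,P_j^l(\xi^0)=\langle\xi^0,\nabla P_j^l(\xi^0)\rangle$ forces $P_j^l(\xi^0)=0$ as well, and homogeneity gives $(P_j^l)^{(\al)}(t\xi^0)=t^{l-|\al|}(P_j^l)^{(\al)}(\xi^0)$, which vanishes for $|\al|\le 1$. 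Hence $\tilde{\cp^l}(t\xi^0)=O(t^{l-2})$ as $t\to+\iy$. Choosing $k$ with $\xi^0_k\ne 0$ and setting $Q(D):=D_k^{l-1}$, one finds $\tilde{Q}(t\xi^0)\ge|\xi^0_k|^{l-1}t^{l-1}$, so $\tilde{Q}/\tilde{\cp^l}$ is unbounded along the ray $t\xi^0$. Consequently $Q\notin L^0_{2,\Om}(P_1^l,\dots,P_N^l)$, while trivially $Q\in L^0_{2,\Om}(P_1^l+D_k^{l-1},P_2^l,\dots,P_N^l)$; this contradicts the principal-type property of $\{P_j^l\}$, and hence of $\Pj$ (the two systems being of principal type simultaneously, since they share the same principal parts).

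The main obstacle is the reformulation step in the second paragraph: translating the abstract principal-type condition into the quantitative lower bound on $\tilde{\cp^l}$. The sufficiency half of this equivalence is essentially formal, but necessity requires that any ray along which $\tilde{\cp^l}$ grows slower than $|\xi|^{l-1}$ be witnessed by an explicit lower-order monomial whose weight dominates $\tilde{\cp^l}$ there---a general version of the concrete construction used in the third paragraph. Once this reformulation is settled, the rest of the proof is routine homogeneity and compactness.
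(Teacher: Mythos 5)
Your approach follows the route the paper itself indicates---H\"ormander's $L^2$-weight characterization of subordination on bounded domains, extended to systems via the weight $\wt{\cp}(\xi):=(\sum_j\wt{P_j}(\xi)^2)^{1/2}$---but the necessity half contains a genuine error. You assert that ``trivially $Q\in L^0_{2,\Om}(P_1^l+D_k^{l-1},P_2^l,\dots,P_N^l)$''; this is false: being a summand of $R_1:=P_1^l+Q$ does not make $Q$ subordinate to the perturbed system. For a concrete counterexample take $n=2$, $l=2$, $N=1$, $P_1^l=\xi_1^2$, $Q=\xi_2$, so $R_1=\xi_1^2+\xi_2$. Along $\xi=(t,-t^2)$ one computes $\wt{Q}(\xi)^2=t^4+1$ while $\wt{R_1}(\xi)^2=4t^2+5$, so $\wt{Q}/\wt{R_1}\to\iy$ and $Q\notin L^0_{2,\Om}(R_1)$. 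The correct step is the linear-algebra trick the paper itself uses in the proof of Proposition~\ref{gl_tip_coerc}: $R_1$ lies \emph{trivially} in $L^0_{2,\Om}(R_1,P_2^l,\dots,P_N^l)$, which by the principal-type hypothesis equals $L^0_{2,\Om}(P_1^l,\dots,P_N^l)$; $P_1^l$ also lies trivially in the latter; hence by linearity $Q=R_1-P_1^l\in L^0_{2,\Om}(P_1^l,\dots,P_N^l)$, contradicting your (correct) computation that $\wt{Q}/\wt{\cp^l}$ is unbounded along the ray $t\xi^0$.

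A secondary point on the sufficiency side: from $\wt{\cp^l}(\xi)\ge c_0(1+|\xi|)^{l-1}$ and $\wt{\cp-\cp^l}(\xi)\le C(1+|\xi|)^{l-1}$ one cannot conclude $\wt{\cp}(\xi)\ge c(1+|\xi|)^{l-1}$ by subtraction, since $c_0$ may be smaller than $C$. The clean route applies the gradient bound directly to the full symbols: $\wt{\cp}(\xi)^2\ge\sum_j|\nabla P_j(\xi)|^2$, and since $\nabla(P_j-P_j^l)$ has degree $\le l-2$, this gives $\sum_j|\nabla P_j(\xi)|^2\ge\tfrac12\sum_j|\nabla P_j^l(\xi)|^2-C(1+|\xi|)^{2(l-2)}\ge c|\xi|^{2(l-1)}$ for large $|\xi|$, with the same bound for any system sharing the principal part. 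This yields the two-sided comparison of the weights directly. Apart from these two points the plan is sound and matches the paper's sketch.
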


The proof of Proposition~\ref{har_gl_tip} is similar to
H\"ormander's (see~\cite{Horm}). However, the analogue of
H\"ormander's theorem for the system $\Pj$ must be used in place
of the theorem itself (see~\cite{Mar63}, \cite{Mal1} and
also~\cite{BIN}, Ch.\,III, \pr\,11).

\begin{remark}
(i) It seems that, as well as Propositions~\ref{prop_3_8}
and~\ref{har_ellipt}, Theorem~\ref{th2} remains true for all
$p\in[1,\iy]$ in the case of operators with variable coefficients.

(ii) In the case of $L^2(\Om)$, where $\Om$ is bounded,
Proposition~\ref{gl_tip_coerc} also follows from
Proposition~\ref{har_gl_tip} (see~\cite{Horm}).

(iii) In~\cite{VolGin} operators $P(x,D)$ of $N$-principal type in
$L^2(\Om)$ defined in terms of the Newton polyhedron $N(P)$ were
introduced and investigated. Estimates of type~\eqref{3.4B} were
obtained for them in~\cite{VolGin}, Chs. V and VI, such that the
sum on the left-hand side extends only to the interior points of
the Newton polyhedron $N(P)$. This is a significant improvement of
H\"ormander's result~\cite{Horm} mentioned above.
\end{remark}

\subsection{On the force of the tensor product
of elliptic operators in $L^\iy$}

It follows from Proposition~\ref{mal_neodn} and Theorem~\ref{th2}
that if $P(D)$ is an elliptic operator of order $l$, then $P^l(D)$
and the monomials $\{D^\al\}_{|\al|<l}$ form a basis of the space
$L_{\iy,\R^n}^0(P)$. Here we describe the structure of the space
$L_{\iy,\R^n}^0(P)$ for the operator $P(D)=P_1(D')\otimes
P_2(D'')$, where $P_1$ and $P_2$ are elliptic operators acting
with respect to different variables.

\begin{prop} \label{poln_simv}
Let $P_1(\xi)$ and $P_2(\eta)$ be elliptic polynomials of degrees
$l$ and $m$, respectively. Let
$\xi=(\xi_1,\dots,\xi_{p_1})\in\R^{p_1}$,\
$\eta=(\eta_1,\dots,\eta_{p_2})\in\R^{p_2}$,\ $p_1+p_2=n$, and
assume that $P_1(\xi)\neq 0$,\ $P_2(\eta)\neq 0$ for all
$\xi\in\R^{p_1}$,\ $\eta\in\R^{p_2}$. Then
$$
  L_{\iy,\R^n}^0(P_1P_2)=\Span\left\{Q_1 Q_2:\; Q_k\in
  L_{\iy,\R^{p_k}}^0(P_k),\; k=1,2\right\}.
$$
\end{prop}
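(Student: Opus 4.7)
My plan is to prove the two inclusions separately. The easier inclusion $\supseteq$ follows from direct algebraic manipulation of the Lee--Mirkil characterization (Proposition~\ref{Q=MP+N}), using that $1/P_k\in\cm_1(\R^{p_k})$ whenever $P_k$ is elliptic of degree $l_k$ and never vanishes on $\R^{p_k}$. This inclusion into $\cm_1$ follows either from Theorem~\ref{th1} by verifying~\eqref{dv1}--\eqref{dv2} for $\Phi=1/P_k$ (using that ellipticity plus non-vanishing gives $|\partial^\al(1/P_k)(\xi)|\le C_\al(1+|\xi|)^{-l_k-|\al|}$, which after a direct estimate makes both sides bounded provided $\delta\le l_k/p_k$), or from the classical fact that the fundamental solution of such a $P_k$ lies in $L^1(\R^{p_k})$. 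Granted it, for $Q_k\in L^0_{\iy,\R^{p_k}}(P_k)$ Proposition~\ref{Q=MP+N} writes $Q_k=M_kP_k+N_k$ with $M_k,N_k\in\cm_1(\R^{p_k})$, so $Q_k/P_k=M_k+N_k\cdot(1/P_k)\in\cm_1(\R^{p_k})$. Then $\tilde M(\xi,\eta):=(Q_1(\xi)/P_1(\xi))(Q_2(\eta)/P_2(\eta))\in\cm_1(\R^n)$ as the Fourier--Stieltjes transform of the corresponding product measure, and the identity $Q_1(\xi)Q_2(\eta)=\tilde M(\xi,\eta)\,P_1(\xi)P_2(\eta)$, together with a second application of Proposition~\ref{Q=MP+N}, gives $Q_1Q_2\in L^0_{\iy,\R^n}(P_1P_2)$.

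For the substantive inclusion $\subseteq$, given a polynomial $R\in L^0_{\iy,\R^n}(P_1P_2)$ I would peel off its highest weight parts in succession. By Proposition~\ref{mal_neodn} the $(l+m)$-principal form of $R$ equals $\la_0 P_1^l(\xi)P_2^m(\eta)$ for some constant $\la_0$, so $R_1:=R-\la_0 P_1P_2$ has total degree $\le l+m-1$ and remains in $L^0_{\iy,\R^n}(P_1P_2)$. To extract the $\xi$-degree-$l$ part of $R_1$, Proposition~\ref{Q=MP+N} supplies $M,N\in\cm_1(\R^n)$ with $R_1=MP_1P_2+N$. A disintegration argument --- writing $M=\hat\mu$ for a finite Borel measure $\mu$ on $\R^n$, the section $M(\,\cdot\,,\eta_0)$ is the Fourier--Stieltjes transform of the projection onto $\R^{p_1}$ of the complex measure $e^{iy\eta_0}\,d\mu(x,y)$, which has total variation $\le\|\mu\|$ --- shows $M(\,\cdot\,,\eta_0),N(\,\cdot\,,\eta_0)\in\cm_1(\R^{p_1})$ for every $\eta_0\in\R^{p_2}$, and hence $R_1(\,\cdot\,,\eta_0)\in L^0_{\iy,\R^{p_1}}(P_1)$. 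A second application of Proposition~\ref{mal_neodn}, now on $\R^{p_1}$ alone, forces the $\xi$-degree-$l$ part of $R_1(\,\cdot\,,\eta_0)$ to equal $\la(\eta_0)P_1^l(\xi)$; comparing coefficients for any $\al$ with $|\al|=l$ and nonzero coefficient of $\xi^\al$ in $P_1^l$ shows $\la(\eta)$ is itself a polynomial in $\eta$, necessarily of degree $\le m-1$, so $\la\in L^0_{\iy,\R^{p_2}}(P_2)$ by Theorem~\ref{th2}. Then $R_2:=R_1-P_1(\xi)\la(\eta)$ subtracts an element of the Span and satisfies $\deg_\xi R_2\le l-1$.

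A symmetric extraction applied to $R_2$ produces a polynomial $\mu_0(\xi)$ of $\xi$-degree $\le l-1$ with the $\eta$-degree-$m$ part of $R_2$ equal to $\mu_0(\xi)P_2^m(\eta)$, so that $R_3:=R_2-\mu_0(\xi)P_2(\eta)$ has $\deg_\xi R_3\le l-1$ and $\deg_\eta R_3\le m-1$. Then $R_3$ is a linear combination of monomials $\xi^\al\eta^\be$ with $|\al|\le l-1$ and $|\be|\le m-1$, each a product of two factors separately subordinate to $P_1$ and $P_2$ by Theorem~\ref{th2}; thus $R_3$ lies in the Span and summing yields $R\in\Span\{Q_1Q_2:Q_k\in L^0_{\iy,\R^{p_k}}(P_k)\}$. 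The main obstacle I foresee is the disintegration step, i.e., rigorously verifying that the restriction of a multiplier in $\cm_1(\R^n)$ to the affine hyperplane $\{\eta=\eta_0\}$ is itself a multiplier in $\cm_1(\R^{p_1})$; once this is secured, the rest is polynomial bookkeeping driven by iterated applications of Propositions~\ref{Q=MP+N} and~\ref{mal_neodn}.
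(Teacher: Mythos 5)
Your inclusion $\supseteq$ is essentially the paper's argument: decompose $Q_k=\td M_kP_k$ with $\td M_k=M_k+P_k^{-1}N_k\in\cm_1(\R^{p_k})$ using $1/P_k\in\cm_1$, take tensor products, and feed back into Proposition~\ref{Q=MP+N}.

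For $\subseteq$ you take a genuinely different route. The paper starts from a decomposition $Q(\xi,\eta)=\sum_{j=1}^s Q_{j1}(\xi)Q_{j2}(\eta)$, bounds $\deg Q_{j1}\le l$ and $\deg Q_{j2}\le m$ by Proposition~\ref{prop_alg_ineq}, then invokes $Q^{l+m}=cP_1^lP_2^m$ (Proposition~\ref{mal_neodn}) and a linear-algebra step (choosing $\eta^1,\dots,\eta^s$ with $\det\bigl(Q^m_{j2}(\eta^r)\bigr)\neq0$) to pin down the principal parts $Q^l_{j1}=\la_j P_1^l$, $Q^m_{j2}=\mu_j P_2^m$, and finally concludes $Q_{jk}\in L^0_{\iy,\R^{p_k}}(P_k)$. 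You instead peel off highest-weight pieces one at a time: first subtract $\la_0 P_1P_2$ using Proposition~\ref{mal_neodn} on $\R^n$; then use the key slicing observation that the restriction of an $\cm_1(\R^n)$-multiplier $\hat\mu$ to $\{\eta=\eta_0\}$ is again in $\cm_1(\R^{p_1})$ (correctly justified as the Fourier--Stieltjes transform of the pushforward of $e^{iy\eta_0}\,d\mu$) to get $R_1(\cdot,\eta_0)\in L^0_{\iy,\R^{p_1}}(P_1)$, apply Proposition~\ref{mal_neodn} in $\xi$ alone, subtract $P_1(\xi)\la(\eta)$, do the symmetric extraction in $\eta$, and what remains has degree $\le l-1$ in $\xi$ and $\le m-1$ in $\eta$ and so splits into monomials each lying in the span by Theorem~\ref{th2}. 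Both arguments hinge on Propositions~\ref{Q=MP+N} and~\ref{mal_neodn}; yours trades the paper's linear-independence bookkeeping on the ad hoc expansion $\sum_j Q_{j1}Q_{j2}$ for the slicing lemma, and is, if anything, a cleaner and more mechanical reduction. One small remark: after subtracting $\la_0P_1P_2$ you should note explicitly that $\deg_\xi R_1\le l$ (which follows from $\deg_\xi R\le l$ by the algebraic inequality) so that Proposition~\ref{mal_neodn} applied to the slice indeed concerns the degree-$l$ part; you use this implicitly and it is true, but it deserves a sentence.
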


\begin{proof}
(i) Let $Q\in\Span\{Q_1 Q_2:\; Q_k\in L_{\iy,\R^{p_k}}^0(P_k),\;
k=1,2\}$, that is,
\begin{equation} \label{9a}
Q(\xi,\eta)=\sum_{j=1}^s Q_{j1}(\xi)Q_{j2}(\eta),
\end{equation}
where $Q_{jk}\in L_{\iy,\R^{p_k}}^0(P_k)$,\;
$j\in\{1,\dots,s\}$,\; $k=1,2$. By Proposition~\ref{Q=MP+N} the
symbols $Q_{j1}(\xi)$ and $Q_{j2}(\eta)$ satisfy the relations
\begin{equation} \label{9}
 Q_{jk}=M_{jk}P_k+N_{jk}=
 \(M_{jk}+P_k^{-1} N_{jk}\)P_k=:\td M_{jk}P_k,
\end{equation}
where $M_{jk}, N_{jk}\in\cm_1(\R^{p_k})$,\; $j\in\{1,\dots,s\}$,\;
$k=1,2$. Since $P_1$ and $P_2$ are elliptic operators with
non-degenerate full symbols, it follows that
$1/P_1(\xi)\in\cm_1(\R^{p_1})$ and $1/P_2(\eta)\in\cm_1(\R^{p_2})$
(see~\cite{BDLM}, \pr\,4, Theorem 3). Then $\td
M_{jk}(\cdot)\in\cm_1(\R^{p_k})\subset\cm_1(\R^n)$,\ $k=1,2$.
Therefore, combining~\eqref{9a},~\eqref{9} and
Proposition~\ref{Q=MP+N} we arrive at $Q\in L_{\iy,\R^n}(P_1P_2)$.

(ii) Conversely, assume that $Q\in L_{\iy,\R^n}(P_1 P_2)$. We
represent the symbol $Q(\xi,\eta)$ as a sum~\eqref{9a}. We will
show that (possibly, after some rearrangement of the terms
in~\eqref{9a}) $Q_{jk}\in
L_{\iy,\R^{p_k}}^0(P_k)$,\;$j\in\{1,\dots,s\}$,\; $k=1,2$.

Let $\max_{j\in\{1,\dots,s\}}\deg Q_{j1}=l'$. First, we prove that
$l'\le l$. Indeed, without loss of generality we may assume that
$\deg Q_{j1}=l'$ for $j\in\{1,\dots,s'\}$,\; $s'\le s$, and $\deg
Q_{j1}<l'$ for $j>s'$. Collecting similar terms in the
sum~\eqref{9a} if necessary we may treat the polynomials
$Q^{l'}_{j1}(\xi)$,\ $j\in\{1,\dots,s'\}$, as linearly
independent. Choose a vector $\eta^0$ such that at least one of
the polynomials $Q_{j2}(\eta)$,\ $j\in\{1,\dots,s'\}$, does not
vanish. If we suppose that $l'>l$, then setting $\eta:=\eta^0$ in
the inequality
\begin{equation} \label{9b}
|Q(\xi,\eta)|\le C_1 |P_1(\xi)P_2(\eta)|+C_2,\qquad
\xi\in\R^{p_1},\quad \eta\in\R^{p_2},
\end{equation}
which follows from the inclusion $Q\in L_{\iy,\R^n}(P_1 P_2)$ (see
Proposition~\ref{prop_alg_ineq}), we arrive at a contradiction. In
fact, since the principal parts of the polynomials $Q_{j1}(\xi)$,\
$j\in\{1,\dots,s'\}$, do not cancel, we have
$$
 \deg Q(\xi,\eta^0)\ge \deg\sum_{j=1}^{s'}
  Q_{j1}(\xi)Q_{j2}(\eta^0)=l'
$$
on the left-hand side of~\eqref{9b}. On the right-hand side
of~\eqref{9b} we obtain
$$
 \deg P_1(\xi)P_2(\eta^0)=l
$$
as $P_2(\eta^0)\neq 0$. Hence $l'\le l$.

The proof of the relation $\max_{j\in\{1,\dots,s\}}\deg
Q_{j2}=:m'\le m$ is similar.

Further, in view of Proposition~\ref{mal_neodn}, the inclusion
$Q\in L_{\iy,\R^n}(P_1 P_2)$ implies the relation
\begin{equation} \label{9c}
Q^{l+m}(\xi,\eta)=\sum_{j=1}^s Q_{j1}^l(\xi) Q_{j2}^m(\eta)=c
P_1^l(\xi) P_2^m(\eta),\qquad \xi\in\R^{p_1},\quad
\eta\in\R^{p_2}.
\end{equation}
As above, we may assume that the polynomials
$\{Q_{j2}^m(\eta)\}_1^s$ are linearly independent. Then we can
find vectors $\eta^1,\dots,\eta^s\in\R^{p_2}$ such that
$\det\(Q_{j2}^m(\eta^r)\)\neq 0$,\; $j,\ r\in\{1,\dots,s\}$
(see~\cite{Naj}, Ch.\,V, \pr\,19, Lemma 3). Setting $\eta=\eta^r$
in~\eqref{9c} we solve the system we have obtained with respect to
the functions $Q_{j1}^l(\xi)$. This implies the relations
$Q_{j1}^l(\xi)=\la_j P_1^l(\xi)$,\; $j\in\{1,\dots,s\}$.

Arguing similarly we arrive at the relations $Q_{j2}^m(\eta)=\mu_j
P_2^m(\eta)$,\; $j\in\{1,\dots,s\}$.

Finally, since the $P_j$ are elliptic, in view of
Propositions~\ref{mal_neodn} and~\ref{har_ellipt}, we have
$Q_{jk}\in L_{\iy,\R^{p_k}}^0(P_k)$, that is, $Q\in\Span\{Q_1
Q_2:\; Q_k\in L_{\iy,\R^{p_k}}^0(P_k),\; k=1,2\}$.
\end{proof}

The non-degeneracy of the full symbols of the operators $P_1$ and
$P_2$ is essential for Proposition~\ref{poln_simv} to hold. The
following result shows that even in the case of the product $P_1
P_2$ of two homogeneous elliptic operators $P_1$ and $P_2$ acting
on different groups of variables, the space $L^0_{\iy,\R^n}(P_1
P_2)$ contains no differential monomials.

\begin{prop}
Let $P_1(\xi)$ and $P_2(\eta)$ be homogeneous elliptic polynomials
of degrees $l$ and $m$, respectively, and let
$\xi=(\xi_1,\dots,\xi_{p_1})\in\R^{p_1}$,
$\eta=(\eta_1,\dots,\eta_{p_2})\in\R^{p_2}$, $p_1,p_2>1$,
$p_1+p_2=n$. Then the inclusion $D^\al\in L^0_{\iy,\R^n}(P_1 P_2)$
does not hold for any $\al\neq 0$.
\end{prop}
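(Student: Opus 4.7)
I argue by contradiction. Suppose $D^\al\in L^0_{\iy,\R^n}(P_1P_2)$ for some $\al\neq 0$, and write $\al=(\al_1,\al_2)$ for the splitting $\R^n=\R^{p_1}\oplus\R^{p_2}$. By Proposition~\ref{Q=MP+N} there exist $M_1,M_2\in\cm_1(\R^n)$ with
\begin{equation}\label{main_id}
\xi^{\al_1}\eta^{\al_2}=M_1(\xi,\eta)\,P_1(\xi)P_2(\eta)+M_2(\xi,\eta).
\end{equation}
The algebraic consequence $|\xi^{\al_1}\eta^{\al_2}|\le C_1|P_1||P_2|+C_2$ of Proposition~\ref{prop_alg_ineq}, evaluated on $\{\xi=0\}$ (where $P_1$ vanishes by homogeneity), forces $\al_1\neq 0$---else $|\eta^{\al_2}|$ would be uniformly bounded on $\R^{p_2}$; symmetrically $\al_2\neq 0$, and a radial rescaling gives $|\al_1|\le l$, $|\al_2|\le m$.

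Next I eliminate the boundary cases via Proposition~\ref{mal_neodn}. Applied to \eqref{main_id} with $P_1P_2$ regarded as a single operator of order $l+m$, it identifies the $(l+m)$-principal part of $\xi^{\al_1}\eta^{\al_2}$ with $\la P_1(\xi)P_2(\eta)$. If $|\al|=l+m$, separating variables in $\xi^{\al_1}\eta^{\al_2}=\la P_1P_2$ forces each $P_k$ to be a scalar multiple of a pure monomial, impossible in $p_k>1$ variables since any such monomial vanishes on a coordinate hyperplane, contradicting ellipticity. To rule out $|\al_1|=l$ with $|\al_2|<m$, I fix $\eta_0$ with $\eta_0^{\al_2}P_2(\eta_0)\neq 0$ and restrict \eqref{main_id} to $\eta=\eta_0$: slicing a Fourier--Stieltjes transform at a fixed coordinate yields a Fourier--Stieltjes transform on the remaining variables, so the restricted identity shows $\xi^{\al_1}\in L^0_{\iy,\R^{p_1}}(P_1)$, and Proposition~\ref{mal_neodn} now forces $\xi^{\al_1}=\la'P_1(\xi)$---the same monomial contradiction. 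Thus I may assume henceforth $|\al_1|<l$ and $|\al_2|<m$.

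In this strict case I test \eqref{main_id} along a carefully chosen anisotropic curve. Evaluating \eqref{main_id} at $\xi=0$ yields $M_2(0,\eta)\equiv 0$. Choose unit vectors $\xi_0\in\R^{p_1}$, $\eta_0\in\R^{p_2}$ with $c:=\xi_0^{\al_1}\eta_0^{\al_2}>0$ (after a sign adjustment if needed) and $P_1(\xi_0)P_2(\eta_0)\neq 0$. Substituting $(\xi,\eta)=(t\xi_0,s\eta_0)$ with $t,s>0$ into \eqref{main_id} and using the homogeneity of $P_1,P_2$ gives the tangency estimate
\begin{equation*}
\bigl|M_2(t\xi_0,s\eta_0)-c\,t^{|\al_1|}s^{|\al_2|}\bigr|\le K\,t^ls^m,\qquad K:=\|M_1\|_\iy\,|P_1(\xi_0)P_2(\eta_0)|.
\end{equation*}
Along the curve $s=s(t):=\bigl(c/(2K)\bigr)^{1/(m-|\al_2|)}\,t^{-\gamma}$ with $\gamma:=(l-|\al_1|)/(m-|\al_2|)>0$, both monomials $t^{|\al_1|}s^{|\al_2|}$ and $t^ls^m$ reduce to constant multiples of $t^\beta$, where $\beta:=(m|\al_1|-l|\al_2|)/(m-|\al_2|)$, and the calibration of $s(t)$ yields the clean lower bound $|M_2(t\xi_0,s(t)\eta_0)|\ge c_0\,t^\beta$ for an explicit $c_0>0$ independent of $t$.

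The main obstacle is the critical case $\beta=0$, i.e., $|\al_1|/l=|\al_2|/m$; the other sub-cases fall immediately. If $\beta>0$ (respectively $\beta<0$), sending $t\to+\iy$ (resp.\ $t\to 0^+$) along the curve makes $|M_2|$ unbounded, contradicting $\|M_2\|_\iy<\iy$. When $\beta=0$, the lower bound $|M_2|\ge c_0$ holds for \emph{every} $t>0$ on the curve; on the other hand, inserting product test functions $u=v\otimes w$ into the original subordination estimate only recovers a Landau--Kolmogorov interpolation inequality consistent with subordination, so product functions alone cannot detect the failure. To close the gap I invoke the \emph{uniform} continuity of $M_2=\hat\mu_2$ on $\R^n$---a classical consequence of $\mu_2$ being a finite Borel measure, since $|M_2(\cdot+h)-M_2(\cdot)|$ is bounded by $\int|e^{ih\cdot y}-1|\,d|\mu_2|(y)$, which tends to $0$ independently of the base point. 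Combined with $M_2(0,\eta)\equiv 0$ this gives $\sup_\eta|M_2(t\xi_0,\eta)|\to 0$ as $t\to 0$; specializing to $\eta=s(t)\eta_0$ contradicts $|M_2|\ge c_0$ and completes the proof.
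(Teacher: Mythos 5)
Your proof is correct, but it follows a genuinely different route from the paper's. The paper restricts the identity from Proposition~\ref{Q=MP+N} to the diagonal subspace $\{\xi_1=\dots=\xi_{p_1},\ \eta_1=\dots=\eta_{p_2}\}$, invokes the fact that the restriction of an $L^\iy$-multiplier to a subspace is again an $L^\iy$-multiplier, and then quotes Boman's theorem (\cite{Bom}, \pr\,5, Theorem 2) to conclude outright that $|\al_1|=l$ and $|\al_2|=m$; Proposition~\ref{mal_neodn} then forces $\xi^{\al_1}\eta^{\al_2}=C\,P_1(\xi)P_2(\eta)$, and the monomial contradiction for $p_1,p_2>1$ finishes. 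Your argument bypasses Boman's theorem entirely by a case analysis on $(|\al_1|,|\al_2|)$: the top case $|\al|=l+m$ is handled, as in the paper, by Proposition~\ref{mal_neodn}; the mixed case $|\al_1|=l$, $|\al_2|<m$ is disposed of by an affine slice at $\eta=\eta_0$ (a fixed-coordinate slice of a Fourier--Stieltjes transform is again a Fourier--Stieltjes transform, so $D^{\al_1}\in L^0_{\iy,\R^{p_1}}(P_1)$, and Proposition~\ref{mal_neodn} again yields a monomial); and the interior case $|\al_1|<l$, $|\al_2|<m$ is settled by scaling along a calibrated anisotropic curve $s=s(t)$ on which the error term is exactly half the main term, giving $|M_2|\ge c_0 t^\be$, which contradicts boundedness of $M_2$ when $\be\neq 0$ and, in the critical balanced case $\be=0$, contradicts the uniform continuity of $\hat\mu_2$ together with $M_2(0,\cdot)\equiv 0$. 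In effect your scaling-plus-uniform-continuity step is a self-contained proof of the portion of Boman's comparison theorem actually needed here; you trade the citation for a longer case analysis. Both proofs rest on the same two pillars, Propositions~\ref{Q=MP+N} and~\ref{mal_neodn}, and reach the same terminal contradiction that a monomial in more than one variable cannot be elliptic.
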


\begin{proof}
Let
\begin{gather*}
D^\al=D'^{\al_1}D''^{\al_2}\in L_{\iy,\R^n}^0(P_1P_2),\qquad
D':=(D_1,\dots,D_{p_1}),\qquad D'':=(D_1,\dots,D_{p_2}),\\
\al_k\in\Z_+^{p_k},\qquad k=1,2.
\end{gather*}
By Proposition~\ref{Q=MP+N} the estimate
\begin{equation} \label{d1d2}
\|D'^{\al_1}D''^{\al_2}f\|_{L^\iy(\R^n)} \le C_1
\|P_1(D')P_2(D'')f\|_{L^\iy(\R^n)}+
 C_2 \|f\|_{L^\iy(\R^n)},\quad f\in C_0^\iy(\R^n),
\end{equation}
is equivalent to the relation
\begin{equation} \label{9d}
  \xi^{\al_1}\eta^{\al_2}=M(\xi,\eta)P_1(\xi)P_2(\eta)+N(\xi,\eta),\qquad
  \xi\in\R^{p_1},\quad \eta\in\R^{p_2},
\end{equation}
where $M,N\in\cm_1(\R^n)$. We set $\xi_1=\xi_2=\dots=\xi_{p_1}$
and $\eta_1=\eta_2=\dots=\eta_{p_2}$ in~\eqref{9d}. Taking into
account that the 'restriction' of a multiplier on $L^\iy(\R^n)$ to
a subspace $E\subset\R^n$ is also a multiplier on $L^\iy(E)$
(see~\cite{St}, Ch.\,IV, \pr\,7.5), from Proposition~\ref{Q=MP+N}
we see that the differential monomial with symbol
$\xi_1^{|\al_1|}\eta_1^{|\al_2|}$ can be estimated in
$L^\iy(\R^2)$ in terms of another differential monomial with
symbol $c\xi_1^l\eta_1^m$, and we have
$c=P_1(1,\dots,1)P_2(1,\dots,1)\neq 0$ because $P_1$ and $P_2$ are
elliptic polynomials. By Boman's theorem (see~\cite{Bom}, \pr,5,
Theorem 2) for $\al\neq 0$ this is possible only in the case of
$|\al_1|=l$,\ $|\al_2|=m$. In this case the monomial
$\xi^{\al_1}\eta^{\al_2}$ has degree $l+m$ and by
Proposition~\ref{mal_neodn} we obtain
$$
   \xi^{\al_1} \eta^{\al_2}=C P_1^l(\xi) P_2^m(\eta)=C P_1(\xi)
   P_2(\eta),
$$
which yields $P_1(\xi)=C_1\xi^{\al_1}$ and
$P_2(\eta)=C_2\eta^{\al_2}$. However, the polynomials
$\xi^{\al_1}$ and $\eta^{\al_2}$ are not elliptic for $p_1>1$ and
$p_2>1$. Thus, the estimate~\eqref{d1d2} does not hold for any
$\al\neq 0$.
\end{proof}

\section{Weak coercivity in the isotropic
    space $\protect\WplRn$}

Here we will study properties of weakly coercive systems of order
$l$ of the form
\begin{equation} \label{izotr_system}
 P_j(x,D)=\sum_{|\al|\le l} a_{j\al}(x) D^\al,
 \qquad j\in\{1,\dots,N\},
\end{equation}
in the isotropic Sobolev space $\WplRn$. In particular, for these
systems we obtain an analogue of Theorem~\ref{exist_quasi_system}.

\subsection{Properties of weakly coercive systems
 in $\protect\WplRn$}

\begin{prop} \label{prop_weak_coerc}
Let $\PjxD$ be a system of operators of the
form~\eqref{izotr_system} of order $l\ge 2$ with the coefficients
$a_{j\al}(\cdot)\in L^\iy_{\loc}(\R^n)$ for $|\al|\le l-1$ and
such that the coefficients of the principal parts are constant.
Assume also that the system $\PjxD$ is weakly coercive in the
isotropic space $\WplRn$,\; $p\in [1,\iy]$.

\emph{(i)} If the operators $P_j(x,D)$,\ $j\in\{1,\dots,N\}$, have
continuous coefficients, then for any fixed $x_0\in\R^n$ the zero
set
$$
\mathcal N(x_0,P):=\{\xi\in\R^n:
P_1(x_0,\xi)=\dots=P_N(x_0,\xi)=0\}
$$
of the system of polynomials $\{P_j(x_0,\xi)\}_1^N$ is compact.

\emph{(ii)} For any system $\{Q_j(x,D)\}_1^N$, where the
$Q_j(x,D)$ are operators of order $\le l-2$ with
$L^\iy(\R^n)$-coefficients, the system $\{P_j(x,D)+Q_j(x,D)\}_1^N$
is also weakly coercive in $\WplRn$.

(iii) Let $\xi^0\in\R^n\setminus\{0\}$ be a zero of the map
$P^l=\(P_1^l,\dots,P_N^l\):\R^n\to\R^{2N}$, that is,
$P_j^l(\xi^0)=0$,\; $j\in\{1,\dots,N\}$. If $n\ge 2N+1$, then the
Jacobi matrix of the map $P^l:=(P_1^l,\dots,P_N^l):\R^n\to\R^{2N}$
at the point $\xi^0$ has rank less than $2N$.

(iv) In addition, let $a_{j\al}(\cdot)\in C^1(\R^n)$ for
$|\al|=l-1$. Let $N=1$,\ $p=\iy$,\ $n\ge 2$. If
$\xi^0\in\R^n\setminus\{0\}$ is a zero of the polynomial
$P^l(\xi)$, then $\nabla P^l(\xi^0)\neq 0$. In particular, if
$n=2$, then the polynomial $P^l(\xi)$ has simple zeros.
\end{prop}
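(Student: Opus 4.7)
The four assertions use different tools: (i) and (ii) rest on scaling and absorption arguments, (iii) is topological via Theorem~\ref{step_otobr} together with the room provided by $n\ge 2N+1$, and (iv) exploits the anisotropic version of Proposition~\ref{mal_neodn} combined with the non-uniqueness of the principal part noted in the introduction.

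For (i), I would apply the weak coercivity estimate to the test functions $f_t(x)=\phi(x-x_0)e^{it\langle x,\xi\rangle}$ with $\phi\in C_0^\iy$ supported in a small neighborhood of the origin and $t\to\iy$; following the scaling technique in the proof of Proposition~\ref{prop_alg_ineq}, the $t^l$ term on the right collapses (by continuity of $a_{j\al}(\cdot)$ at $x_0$, which lets us replace $P_j(x,\xi)$ by $P_j(x_0,\xi)$ uniformly on $\mathrm{supp}\,\phi$) to $t^l|P_j(x_0,\xi)|\,\|\phi\|_\iy$. This yields the algebraic symbol inequality $|\xi|^{l-1}\le C_1\sum_j|P_j(x_0,\xi)|+C_2$, forcing the zero set of $\{P_j(x_0,\cdot)\}_1^N$ into the compact ball $\{|\xi|^{l-1}\le C_2\}$. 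For (ii), I would combine the standard interpolation inequality $\sum_{|\al|\le l-2}\|D^\al f\|_p\le \eps\sum_{|\al|<l}\|D^\al f\|_p+C_\eps\|f\|_p$ on $C_0^\iy(\R^n)$ with the order-$\le l-2$ bound $\|Q_j f\|_p\le M\sum_{|\al|\le l-2}\|D^\al f\|_p$ (using the $L^\iy(\R^n)$-coefficients of $Q_j$) and the triangle inequality $\|P_j f\|_p\le \|(P_j+Q_j)f\|_p+\|Q_j f\|_p$; for $\eps$ sufficiently small this absorbs $\|Q_j f\|_p$ into the weak coercivity estimate for $\{P_j\}$, yielding weak coercivity of $\{P_j+Q_j\}$.

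For (iii), suppose for contradiction $\rank JP^l(\xi^0)=2N$. The implicit function theorem then makes $\{P^l=0\}$ an $(n-2N)$-dimensional smooth submanifold near $\xi^0$, and $P^l$ restricted to a small transversal $2N$-disk $B^{2N}\subset\R^n$ centered at $\xi^0$ is a local diffeomorphism onto a ball about $0\in\R^{2N}$; in particular $P^l|_{\partial B^{2N}}\colon\S^{2N-1}\to\R^{2N}\setminus\{0\}$ has degree $\pm 1$. Using the $(n-2N)\ge 1$ directions ambient to $B^{2N}$ together with the $l$-homogeneity of $P^l$ and the weak coercivity of $\{P_j(x,D)\}$, one deforms the cap bounded by $\partial B^{2N}$ off the zero submanifold to produce a continuous extension of $P^l|_{\partial B^{2N}}$ to a $2N$-disk lying entirely in $\R^{2N}\setminus\{0\}$. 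By Theorem~\ref{step_otobr} such an extension forces the degree to vanish, contradicting degree $\pm 1$.

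For (iv), assume $\nabla P^l(\xi^0)=0$ for contradiction. A linear change of coordinates placing $\xi^0$ at $e_n$, combined with the homogeneity of $P^l$ and the vanishing of all partials at $\xi^0$, allows us to isolate a ``degenerate factor'' in $P^l$ of multiplicity $\ge 2$ along the $\xi^0$-direction. The key point, alluded to in the introduction, is that $P(x,D)$ may be reinterpreted as an $l'$-quasihomogeneous operator for a carefully chosen weight $l'=(l_1',\dots,l_n')$ whose $l'$-principal symbol $P^{l'}(x,\xi)$ is a proper sub-part of $P^l$ excluding the degenerate factor. Applying the anisotropic version of Proposition~\ref{mal_neodn} (cf.\ Remark~\ref{rem_mal_neodn}(i)) to a monomial $D^\al$ subordinated to $P(x,D)$ via weak coercivity, one obtains an identity $(D^\al)^{l'}(x,\xi)=\la(x)\,P^{l'}(x,\xi)$ on $l'$-principal symbols that is incompatible with the presence of the degenerate factor in $P^l$, giving the contradiction. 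The ``in particular'' assertion for $n=2$ is then immediate: a homogeneous polynomial in two variables factors as $P^l(\xi)=c\prod_k(b_k\xi_1-a_k\xi_2)^{m_k}$, and $\nabla P^l(\xi^0)=0$ at a nonzero zero is equivalent to some factor having multiplicity $m_k\ge 2$, whence $\nabla P^l\ne 0$ characterizes simple nontrivial zeros. The principal technical obstacle is the construction of the anisotropic weight $l'$ tailored to the specific degenerate direction, and verifying that the resulting identity from the anisotropic Proposition~\ref{mal_neodn} is genuinely incompatible with that degeneracy; once this is done, the contradiction follows.
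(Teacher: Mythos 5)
Your plan for (ii) and (iv) matches the paper's proof in its essentials: (ii) is the standard absorption via the interpolation/embedding inequality, and (iv) indeed proceeds by placing $\xi^0=e_n$, exploiting $\nabla P^l(\xi^0)=0$ to kill the monomials $\xi_n^l$ and $\xi_n^{l-1}\xi_j$, choosing the smallest $k\ge 2$ with a surviving monomial $\xi^\al$, $|\al|=l$, $\al_n=l-k$, and then building an anisotropic weight $l'$ whose $l'$-principal part is $c_0(x)\xi_n^{l-1}+\sum_{|\al|=l,\al_n=l-k}a_\al\xi^\al$ (note this is not a ``sub-part of $P^l$'' — it genuinely mixes the degree-$(l-1)$ term $c_0(x)\xi_n^{l-1}$ with degree-$l$ terms), so that the identity from Proposition~\ref{mal_neodn} with $Q=D_n^{l-1}$ forces $a_\al=0$, contradicting the choice of $k$. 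So (iv) is the right route, with this detail to adjust.

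For (i), however, there is a genuine gap. You claim that substituting $f_t(x)=\phi(x-x_0)e^{it\langle x,\xi\rangle}$ and scaling yields the algebraic inequality $|\xi|^{l-1}\le C_1\sum_j|P_j(x_0,\xi)|+C_2$. But the weak coercivity estimate only has monomials of order $\le l-1$ on the left, which scale as $t^{l-1}$, whereas $P_j(x,D)f_t$ scales as $t^l$ whenever $P_j^l(\xi)\neq 0$; so dividing by $t^{l-1}$ and letting $t\to\iy$ gives nothing, while dividing by $t^l$ kills the left-hand side. If instead you scale the support of $\phi$ (as in Proposition~\ref{prop_alg_ineq}) the support must grow, and you then lose the possibility of replacing $P_j(x,\xi)$ by $P_j(x_0,\xi)$ via continuity at $x_0$. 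The paper avoids this tension entirely by restricting attention to points $\xi^{(m)}$ in the zero set $\mathcal N(x_0,P)$; there the constancy of the principal part gives $P_j(x,\xi^{(m)})=P_j(x,\xi^{(m)})-P_j(x_0,\xi^{(m)})=\sum_{|\al|\le l-1}(a_{j\al}(x)-a_{j\al}(x_0))(\xi^{(m)})^\al$, which has degree $\le l-1$ in $\xi^{(m)}$ and is uniformly small (coefficientwise) for $x$ near $x_0$. Now both sides of the estimate are $O(|\xi^{(m)}|^{l-1})$ with comparable constants, and a contradiction follows. The full algebraic inequality you assert is neither derived by your substitution nor needed.

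For (iii) there is also a missing step. Your first half (implicit function theorem, transversal $2N$-disk, degree $\pm 1$ of $P^l|_{\partial B^{2N}}$) is fine and matches the paper's computation via a local parametrization and $J_{\widetilde T}(0)\neq 0$. But you then say ``one deforms the cap bounded by $\partial B^{2N}$ off the zero submanifold to produce a continuous extension of $P^l|_{\partial B^{2N}}$ ... lying entirely in $\R^{2N}\setminus\{0\}$.'' This cannot work for $P^l$ itself: $P^l$ is homogeneous, so $\{P^l=0\}$ is a cone through the origin containing the whole ray $\R_+\xi^0$, and any cap in $\R^n$ (or a hemisphere of $\S^{2N}_r$) spanning $\partial B^{2N}$ must intersect it. The crucial point in the paper — which your sketch omits — is to switch from $P^l$ to the full symbol $P=(P_1,\dots,P_N)$: for $r$ large, $P$ is homotopic to $P^l$ on $\S_r^{2N-1}$ in $\R^{2N}\setminus\{0\}$ (they differ by terms of lower order in $r$), so $\widetilde P^r$ also has winding number $\pm 1$; by Theorem~\ref{step_otobr} any continuous extension of $\widetilde P^r$ to the disk (e.g.\ the map $P$ on a hemisphere of $\S_r^{2N}$) must vanish; but this contradicts assertion (i), which says the zero set of $\{P_j(x_0,\cdot)\}$ is compact. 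In other words, part (i) is a logical prerequisite here, and without passing to the full symbol the argument does not close.
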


\begin{proof}
(i) Suppose that the set $\mathcal N(x_0,P)$ is not compact for
some $x_0\in\R^n$. Then for some sequence $\{\xi^{(m)}\}_1^\iy$,\
$\lim_{m\to+\iy} \xi^{(m)}=\iy$, we have $P_j(x_0,\xi^{(m)})=0$,\
$j\in\{1,\dots,N\}$ (without loss of generality we can assume that
$|\xi^{(m)}|>1$ for $m\in\N$).

Since the principal parts of the operators $P_j(x,D)$ have
constant coefficients, the symbols $P_j(x,\xi^{(m)})
=P_j(x,\xi^{(m)})- P_j(x_0,\xi^{(m)})$ have degree $\le l-1$ (with
respect to $\xi$). Then for each $\e>0$ there exists a ball
$$
B_\delta(x_0):=\{x\in\R^n: |x-x_0|\le\delta\},\qquad \delta>0,
$$
such that for $x\in B_\delta(x_0)$ we have
\begin{equation} \label{l-1}
P_j(x,\xi^{(m)})\le \frac{\e}{N} |\xi^{(m)}|^{l-1}, \qquad
\left|\frac{\partial P_j}{\partial\xi_k}(x,\xi^{(m)})\right|\le
C|\xi^{(m)}|^{l-1},
\end{equation}
where  $m\in\N$,\; $j\in\{1,\dots,N\}$,\; $k\in\{1,\dots,n\}$.

Let $\psi\in C_0^\iy\(B_\delta(x_0)\)$,\ $\psi\not\equiv 0$.
Consider the functions $f_r(x):=\psi_r(x)e^{i\langle
x,\xi^{(m)}\rangle}$, where $\psi_r(x):=\psi(x/r)$,\ $r>0$.
Then~\eqref{l-1} implies the estimates
\begin{equation} \label{P_j}
\begin{aligned}
\|P_j(x,\xi^{(m)})e^{i\langle x,\xi^{(m)}\rangle }\psi_r(x)\|_p
\le \frac{\e}{N} |\xi^{(m)}|^{l-1}\cdot\|\psi_r\|_p\ , \\
\left\|\frac{\partial P_j}{\partial\xi_k}(x,\xi^{(m)}) e^{i\langle
x,\xi^{(m)}\rangle} D_k\psi_r(x)\right\|_p \le
 C|\xi^{(m)}|^{l-1}\cdot\|D_k\psi_r\|_p\ .
 \end{aligned}
\end{equation}
Setting $f=f_r$ in inequality~\eqref{10'} and using Leibniz's
formula~\eqref{herm} we obtain
\begin{gather}
\notag \sum_{|\al|<l} \left\|\(\xi^{(m)}\)^\al
 e^{i\langle x,\xi^{(m)}\rangle}
\psi_r(x)+o\(|\xi^{(m)}|^\al\)\right\|_p\le C_1\sum_{j=1}^N
\left\|P_j(x,\xi^{(m)}) e^{i\langle x,\xi^{(m)}\rangle}\psi_r(x) \right.\\
\left.+\sum_{k=1}^n \frac{\partial P_j}{\partial\xi_k}
(x,\xi^{(m)}) e^{i\langle x,\xi^{(m)}\rangle}
D_k\psi_r(x)+o\(|\xi^{(m)}|^{l-1}\)\right\|_p+ C_2\|\psi_r\|_p\
.\label{o(xi)}
\end{gather}
Taking account of~\eqref{P_j} and the obvious inequality
$|\xi^{(m)}|^{l-1}\le C'\sum_1^n |\xi_k^{(m)}|^{l-1}$,
from~\eqref{o(xi)} we obtain
\begin{equation} \label{xi_k^m}
 |\xi^{(m)}|^{l-1}\cdot\|\psi_r\|_p
 \le C'C_1 |\xi^{(m)}|^{l-1} \[ \e\|\psi_r\|_p
 +C\sum_{k=1}^n \|D_k\psi_r\|_p\]+o\(|\xi^{(m)}|^{l-1}\).
\end{equation}
Dividing both sides of~\eqref{xi_k^m} by $|\xi^{(m)}|^{l-1}$ and
passing to the limit as $m\to\iy$ we see that
\begin{equation} \label{rnp}
 \|\psi_r\|_p \le C'C_1 \[\e \|\psi_r\|_p+C \sum_{k=1}^n
 \|D_k\psi_r\|_p\].
\end{equation}
Now let $p\in[1,\iy)$. Note that
$$
\|\psi_r\|_p=r^{n/p}\|\psi\|_p,\qquad \|D_k\psi_r\|_p=r^{-1}\cdot
r^{n/p}\|D_k\psi\|_p.
$$
Cancelling out the factor $r^{n/p}$ in~\eqref{rnp} and letting
$r\to\iy$ we arrive at the estimate $\|\psi\|_p \le \e
C'C_1\|\psi\|_p$. (For $p=\iy$ we must set here
$r^{n/p}=r^{n/\iy}=1$.) Finally, taking $\e>0$ sufficiently small
we arrive at a contradiction.

(ii) The embedding theorems (see~\cite{BIN}, Ch.\,III, \pr\,9)
imply that for any $\e>0$ there is $C_\e>0$ such that
\begin{equation}\label{eq_4}
\sum_{j=1}^N\|Q_j(x,D)f\|_p \le \e \sum_{|\al|\le l-1} \|D^\al
f\|_p +C_\e \|f\|_p,\qquad f\in C_0^\infty(\R^n).
\end{equation}

Combining~\eqref{eq_4} and~\eqref{10'} yields
\begin{equation} \label{2''}
\sum_{j=1}^N \|P_j(x,D)f+Q_j(x,D)f\|_p+\|f\|_p
 \ge \(C_1-\e\) \sum_{|\al|\le l-1} \|D^\al f\|_p-C_\e
 \|f\|_p
 \end{equation}
for all $f\in C_0^\infty(\R^n)$. We choose $\e<C_1$
in~\eqref{eq_4}. Then~\eqref{2''} implies the following estimate:
$$
 \sum_{|\al|\le l-1} \|D^\al f\|_p\le \(C_1-\e\)^{-1}
 \sum_{j=1}^N \|P_j(x,D)f+Q_j(x,D)f\|_p+\(C_\e+1\)\|f\|_p\ .
$$
The weak coercivity of the system $\{P_j(x,D)+Q_j(x,D)\}_1^N$
follows from this inequality.

(iii)  First let $n=2N+1$. By (ii) we may assume that the
operators  $P_j$ have the form $P_j(x,D)=P_j^l(D) +
P_j^{l-1}(x,D)$, where  $P_j^{l-1}(x,D):=\sum_{|\al|=l-1}
a_{j\al}(x)D^\al$,\ $j\in\{1,\dots,N\}$. Assume the contrary: the
rank of the Jacobi matrix of the map
$P^l=(P_1^l,\dots,P_N^l):\R^{2N+1}\to\R^{2N}$ at the point $\xi^0$
equals  $2N$:
\begin{equation} \label{J_P} \rank
\frac{\partial (\Re\ P_1^l,\ \Im\ P_1^l,\ \dots,\ \Re\ P_N^l,\
\Im\ P_N^l)}{\partial(\xi_1,\dots,\xi_{2N+1})}(\xi^0)=2N.
\end{equation}

Consider a smooth parametrization of the sphere $\S^{2N}$ in a
neighborhood of the point  $\xi^0$, that is, a diffeomorphism
$\Phi:=(\Phi_1,\dots,\Phi_{2N+1}): B_\e^{2N}\to V$, where
$B_\e^{2N}:=\{\phi\in\R^{2N}:|\phi|<\e\}$,\ $V$  is an open
neighborhood of the point  $\xi^0\in \S^{2N}$, and
$\Phi(0)=\xi^0$. Since $\Phi$ is the diffeomorphism, the Jacobi
matrix of $\Phi$ at the origin has rank $2N$:
\begin{equation} \label{ogr_1}
\rank \frac{\partial\(\Phi_1,\dots,\Phi_{2N+1}\)}{\partial
\(\phi_1,\dots,\phi_{2N}\)}(0)=2N.
\end{equation}

Let  $\td T:=P^l\circ\Phi$ be the composition of the maps $P^l$
and $\Phi$,\; $\td T:B_\e^{2N}\to\R^{2N}$. Since $\Phi(0)=\xi^0$
and since by assumption $P_j^l(\xi^0)=0$,\; $j\in\{1,\dots,N\}$,
it follows that
\begin{equation} \label{T(0)}
 \td T(0)=0.
\end{equation}
In addition, it follows from~\eqref{J_P} and~\eqref{ogr_1} that
the map $\td T$ has Jacobian $J_{\td T}$ distinct from zero at the
origin:
\begin{equation} \label{rank_J}
      J_{\td T}(0)\neq 0.
\end{equation}

By~\eqref{T(0)} and~\eqref{rank_J} the origin is an isolated zero
of the map $\td T:B_\e^{2N}\to\R^{2N}$. Let $U\subset\R^{2N}$ be
an open neighborhood of the origin such that $\td T(\phi)\not=0$,\
$\phi\in \ol U\setminus\{0\}$, and  $J_{\td T}(\phi)\not =0$,\;
$\phi\in\ol U$. We can assume for simplicity that  $U=B_\e^{2N}$.
We denote by $\Phi^r$ the map from $U$ to $\S_r^{2N}$ defined by
$\Phi^r(\phi): =r\Phi(\phi): =(r\Phi_1(\phi), \dots,
r\Phi_{2N+1}(\phi))$ for $r>0$, and we denote by $\td T^r:=
P^l\circ\Phi^r:$,\ $\td T^r: U\to\R^{2N}$, the composition of the
maps $P^l$ and $\Phi^r$. Since the components of $P^l$ are
homogeneous polynomials of degree  $l$, we have $\td
T^r=P^l(\Phi^r)=P^l(r\Phi)=r^l P^l(\Phi)=r^l \td T$. Similarly,
$J_{\td T^r}=r^{2N(l-1)} J_{\td T}$. It follows that the maps $\td
T^r$  satisfy the same relations as $\td T$, that is,
\begin{equation}\label{4.11}
\td T^r(\phi)\neq 0, \quad  \phi\in \ol U\setminus\{0\},\qquad
J_{\td T^r}(\phi)\not =0, \quad \phi\in\ol U.
\end{equation}
By~\eqref{4.11}, for each $r>0$ the vector field $\td T^r$ does
not vanish on the boundary $\partial U = \S_\e^{2N-1}$ and it has
only one singular point in the interior of $U$.

We may assume without loss of generality that $J_{\td T}(0)>0$
in~\eqref{rank_J}, and hence $J_{\td T^r}(\phi)>0$
in~\eqref{4.11}. Therefore, the singular point $0$ of the vector
field $\td T^r$ has index $1$,\ $\ind(0,\td T^r)$=1, where, as
usual, $\ind(x_0, F)$ is the index of the singular point $x_0$ of
the vector field  $F$. Since the rotation number $\ga(\td T^r,
\partial U)$ of the vector field  $\td T^r$ on
$\partial U$ is equal to the sum of the indices of singular points
of the vector field $\td T^r$ in the interior of $U$, it follows
that
\begin{equation}\label{4.12}
\ga(\td T^r, \partial U)= \ind(0, \td T^r)= \sign J_{\td
T^r}(0)=1.
\end{equation}
We fix  $x_0\in\R^n$.  Since  $P^l(\xi)\not =0$,\ $\xi\in
\partial V$, there exists  $r_0>0$ such that for $r>r_0$ we have
\begin{equation} \label{P_r}
P(x_0,r\xi)= P^l(r\xi)+ P^{l-1}(x_0,r\xi) = r^l[P^l(\xi)+
r^{l-1}P^{l-1}(x_0,\xi)]\neq 0, \qquad \xi\in \partial V.
\end{equation}
Let us introduce the maps  $\td P:=P\circ\Phi:U\to\R^{2N}$, where
$$
P:=(\Re\ P_1,\Im\ P_1,\dots,\Re\ P_N,\Im\
P_N):\R^{2N+1}\to\R^{2N},
$$
and $\td P^r:=P\circ\Phi^r:U\to\R^{2N}.$  Taking account
of~\eqref{P_r} and the fact that  $\Phi$ is a diffeomorphism, we
obtain $\td P^r(\phi)\neq 0$ for $\phi\in\partial U$. Hence, for
each $r>r_0$, the maps $\td T^r:\partial
U\to\R^{2N}\setminus\{0\}$ and $\td P^r:
\partial U\to\R^{2N}\setminus\{0\}$ are homotopic
in the space of continuous maps from $\partial U$ into
$\R^{2N}\setminus\{0\}$, and the homotopy is given by
$$
\Psi_r(t,\xi)= tP(x_0,\xi)+(1-t) P^l(\xi) = P^l(\xi)+ t
P^{l-1}(x_0,\xi):\ \partial V \to\R^{2N+1}\setminus\{0\},  \quad
t\in[0,1].
$$
But homotopic fields have equal rotations. Hence, taking account
of~\eqref{4.12} we obtain
\begin{equation}
\ga(\td P^r, \partial U) = \ga(\td T^r, \partial U)=1.
\end{equation}
Thus, the map $P^r: \partial U \to \R^{2N}\setminus\{0\}$ is
homotopically non-trivial, hence by Theorem~\ref{step_otobr} any
continuous extension of it into the interior of $U$ has zeros for
each $r>r_0$. In particular, for every $r>r_0$ there exists
$\phi^0(r)\in U$ such that $\td
P^r(\phi^0(r))=P(\Phi^r(\phi^0(r)))=0$, where
$\Phi^r(\phi^0(r))\in\S_r^{2N}$. This contradicts assertion (i)
that the zero set is compact. Thus, the statement is proved for
$n=2N+1$.

Now assume that $n>2N+1$, while the rank of the Jacobi matrix of
$P^l$ at $\xi^0$ is $2N$. We choose  $2N$ columns containing a
non-trivial minor and set the remaining $n-2N$ columns equal to
zero. By (i) the zero set $\mathcal N(x_0,P)$ is compact since the
system~\eqref{izotr_system} is weakly coercive in $\WplRn$. This
property still holds if we restrict the polynomials to a subspace,
hence the proof reduces to the previous case of $n=2N+1$.

(iv)  The proof is based on Proposition~\ref{mal_neodn}. Assume
the contrary, that is, suppose that the operator  $P(x,D)$ is
weakly coercive in $\WplRnInf$, while
\begin{equation} \label{iv}
\nabla P^l(\xi^0)=\(\frac{\partial P^l}{\partial\xi_1}, \dots,
\frac{\partial P^l}{\partial\xi_n}\)(\xi^0)=0.
\end{equation}
After a suitable orthogonal change of the variables
$\xi_1,\dots,\xi_n$, we may assume that $\xi^0=(0,\dots,0,1)$.
Then equality~\eqref{iv} means that the coefficients of the
monomials $\xi_n^l$ and $\xi_n^{l-1}\xi_j$,\
$j\in\{1,\dots,n-1\}$, in $P^l(\xi)$ are zero. Consider the
smallest $k\in\N$ such that at least one of the monomials
$\xi^\al$,\ $|\al|=l$,\ $\al_n=l-k$, occurs in $P^l$ with a
non-zero coefficient (such a $k$ exists since $P^l\not\equiv 0$
and $k\ge 2$ by~\eqref{iv}). Let
\begin{equation}\label{vec}
l':=(l'_1,\dots,l'_n):=\(\frac{k(l-1)}{k-1},\ \dots,\
\frac{k(l-1)}{k-1},\ l-1\).
\end{equation}
The vector $l'$ defines a hyperplane  $\pi':\ |\al:l'| =
\sum_{j=1}^n \xi_j/l'_j=1$, or
\begin{equation} \label{hyperpl}
\pi':\  (k-1)(\xi_1+\dots+\xi_{n-1})+k\xi_n=k(l-1).
\end{equation}
Let $P(x,\xi) = \sum_{|\al|\le l} a_\al(x) \xi^\al$  be the full
symbol of the operator $P(x,D)$. Clearly, $(0,\dots,0,l-1)\in
\pi'$, and $\alpha\in\pi'$ for $|\alpha|=l$ and $\alpha_n=l-k$. We
claim that the exponents $\alpha$  of the other monomials
$\xi^{\alpha},\ |\alpha|\le l$, lie 'below' the hyperplane $\pi'$.
If $|\al|=l$ and $\al_n<l-k$, then
\begin{equation} \label{ineq0}
(k-1)(\al_1+\dots+\al_{n-1})+k\al_n<k(l-1).
\end{equation}
Finally, if either $|\alpha|\le l-1$ or $|\alpha|=l-1$, but
$\alpha_n<l-1$, then inequality~\eqref{ineq0} also holds. Thus,
the exponents of all  monomials $\xi^{\alpha}$ either lie 'below'
the hyperplane $\pi'$ or belong to it. Therefore, the
$l'$-principal form $P^{l'}(x,\xi):=\sum_{|\al:l'|= 1} a_\al(x)
\xi^\al$ of the full symbol $P(x,\xi)$ has the form
\begin{equation} \label{usl}
P^{l'}(x,\xi)=c_0(x) \xi_n^{l-1}+\sum_{|\al|=l,\ \al_n=l-k} a_\al
\xi^\al,\qquad c_0(x):=a_{0,\dots,0,l-1}(x).
\end{equation}

Now we apply Proposition~\ref{mal_neodn} to the operators
$Q(D):=D_n^{l-1}$,\ $P(x,D)$ and the vector $l'$ of the
form~\eqref{vec}. Clearly, $Q^{l'}(D)=Q(D)=D_n^{l-1}$, and taking
account of~\eqref{usl},
\begin{equation} \label{tozh}
\xi_n^{l-1}\equiv \la(x)\[c_0(x) \xi_n^{l-1}+\sum_{|\al|=l,\
\al_n=l-k} a_\al \xi^\al\].
\end{equation}
From~\eqref{tozh} we obtain
$$
\la(x) c_0(x)\equiv 1,\quad \la(x) a_\al\equiv 0,\qquad
x\in\R^n,\quad |\al|=l,\quad \al_n=l-k.
$$
Hence $a_\al=0$,\ $|\al|=l$,\ $\al_n=l-k$. This contradicts the
choice of $k$. Thus, $\nabla P^l(\xi^0)\!\neq\!0$.
\end{proof}

\begin{remark} \label{3a}
(i) In the case of a weakly coercive system $\Pj$ with constant
coefficients the compactness of the zero set of the map
$P=(P_1,\dots,P_N):\R^n\to\R^{2N}$  follows from the algebraic
inequality~\eqref{alg_ineq}.

(ii) The condition $n\ge 2N+1$ in assertion (iii) is sharp. For
instance, the Jacobi matrix of the system
$\{(\xi_1+i)(\xi_2+i)$,\; $(\xi_3+i)(\xi_4+i)\}$ has rank one at
the point $(1,0,0,0)$ and rank two at $(1,0,1,0)$.

(iii) In the case of constant coefficients assertion (iv) has
significance only for $n=2$ since, in view of
Theorem~\ref{deLeu-Mir_intr}, any weakly coercive operator in
$\WplRnInf$ is elliptic for $n\ge 3$.
\end{remark}

\subsection{For which $l$ do weakly coercive systems exist?}

In the next theorem we extend Theorem~\ref{exist_quasi_system} to
systems of operators with constant coefficients, that are weakly
coercive in $\WplRn$,\ $p\in[1,\iy]$.

\begin{theorem} \label{prop6}
Let $\Pj$ be a system  of order $l$ that is weakly coercive in the
isotropic Sobolev space $\WplRn$,\ $p\in[1,\iy]$, and suppose that
$n\ge 2N+1$. If the map
$$
 P^l:=(P_1^l,\dots,P_N^l):\R^n\to\R^{2N}
$$
has finitely many zeros on the sphere  $\S^{n-1}$, then $l$ is
even.
\end{theorem}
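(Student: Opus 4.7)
The plan is to argue by contradiction. Suppose $l$ is odd. Then each $P_j^l$ is a homogeneous polynomial of odd degree, so $P_j^l(-\xi)=-P_j^l(\xi)$, and hence the map $P^l:\R^n\to\R^{2N}$ is odd. Since the zero set $Z:=\{\xi\in\S^{n-1}:P^l(\xi)=0\}$ is finite by hypothesis and $P^l$ is $l$-homogeneous, the zero locus of $P^l$ in $\R^n$ is a finite union of lines through the origin. By Proposition~\ref{prop_weak_coerc}(iii), the Jacobian $dP^l(\xi^0)$ has rank strictly less than $2N$ at every $\xi^0\in Z$, so its image is a proper subspace of $\R^{2N}$.

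Assume first that $n\ge 2N+2$. I would choose a generic $(2N+1)$-dimensional subspace $V'\subset\R^n$ avoiding all zero lines apart from the origin; this is possible since there are only finitely many such lines and, for $n>2N+1$, the set of $(2N+1)$-planes containing a given line is a proper subvariety of the Grassmannian of $(2N+1)$-planes in $\R^n$. Then $P^l|_{V'\setminus\{0\}}$ takes values in $\R^{2N}\setminus\{0\}$, and its restriction to the $2N$-sphere $V'\cap\S^{n-1}$ is a nonvanishing odd continuous map to $\R^{2N}$, contradicting the Borsuk-Ulam Theorem~\ref{borsuk}. This reduces the problem to the critical case $n=2N+1$.

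For $n=2N+1$, the whole space $\R^n$ plays the role of $V'$ and necessarily contains the zero rays, so one has to work harder. The idea is to exploit the rank deficiency at each $\xi^0\in Z$ via an odd perturbation. I would choose an $l$-homogeneous odd polynomial map $u:\R^n\to\R^{2N}$ satisfying $u(\xi^0)\notin\operatorname{image}\,dP^l(\xi^0)$ for every $\xi^0\in Z$; since each image is a proper subspace of $\R^{2N}$ and $Z$ is finite, these are finitely many open conditions satisfied by a generic $u$ of sufficiently high degree. For $\epsilon>0$ sufficiently small, the perturbed map $\widetilde P_\epsilon:=P^l+\epsilon u$, which is again odd and $l$-homogeneous, should have no zeros on $\S^{2N}$: away from $Z$ the lower bound $|P^l|\ge c>0$ dominates $\epsilon u$, while in a small neighborhood of each $\xi^0$, projecting onto the cokernel of $dP^l(\xi^0)$ isolates the nonzero term $\epsilon\pi u(\xi^0)$, which dominates the Taylor remainder on a ball of radius comparable to $\sqrt{\epsilon}$. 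Applying the Borsuk-Ulam Theorem~\ref{borsuk} to the resulting nonvanishing odd continuous map $\widetilde P_\epsilon:\S^{2N}\to\R^{2N}\setminus\{0\}$ then yields the desired contradiction.

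The main obstacle is precisely the gluing step in the last paragraph. In the ``kernel directions" of $dP^l(\xi^0)$ (which are forced to exist by the rank deficiency), the polynomial $P^l$ may vanish to an order strictly greater than two, so the annular region between the $\sqrt{\epsilon}$-ball, on which the cokernel projection controls $\widetilde P_\epsilon$, and the region where $|P^l|$ itself is large enough to absorb $\epsilon u$ need not cover the sphere for arbitrarily small $\epsilon$. Overcoming this will likely require a more careful choice of $u$, perhaps as a sum of odd homogeneous perturbations of several degrees adapted to the vanishing orders of $P^l$ along the various kernel directions, combined with the algebraic inequality of Proposition~\ref{prop_alg_ineq} to constrain the lower-order parts of the $P_j$ near $Z$.
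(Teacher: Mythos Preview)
Your argument for $n\ge 2N+2$ is correct and in fact cleaner than the paper's reduction. The genuine gap is exactly where you locate it: the perturbation step for $n=2N+1$. The difficulty is not merely technical. The rank condition from Proposition~\ref{prop_weak_coerc}(iii) controls only the \emph{linear} part of $P^l$ near each $\xi^0\in Z$, and gives no information about higher-order vanishing along kernel directions; there is no reason an odd homogeneous correction $u$ (or a finite sum of such) should suffice to kill all zeros of $P^l+\epsilon u$ on $\S^{2N}$ uniformly in $\epsilon$.

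The paper's proof sidesteps this entirely by using the \emph{full} symbol $P=(P_1,\dots,P_N)$ rather than the principal part, and invoking Proposition~\ref{prop_weak_coerc}(i) instead of (iii). The key observation is that since $Z$ is finite, one can choose an equatorial $\S^{2N-1}\subset\S^{2N}$ (say $\{\xi_n=0\}$) avoiding $Z$. On this equator $P^l$ is nonvanishing and odd, so by Borsuk's degree theorem (Theorem~\ref{borsuk_2}) the normalized map $P^l/|P^l|:\S^{2N-1}\to\S^{2N-1}$ has odd, hence nonzero, degree. For large $r$ the full map $P$ on $\S^{2N-1}_r$ is homotopic to $P^l$ in $\R^{2N}\setminus\{0\}$ and thus inherits this nonzero degree. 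Theorem~\ref{step_otobr} then forces any continuous extension of $P|_{\S^{2N-1}_r}$ over the ball $B^{2N}_r$ to vanish; taking the extension given by $P$ on the upper hemisphere of $\S^{2N}_r$ produces a zero of $P$ on every large sphere, contradicting the compactness of the zero set of $P$ from Proposition~\ref{prop_weak_coerc}(i). In short: rather than perturbing $P^l$ artificially, the lower-order terms of $P$ itself serve as the perturbation, and weak coercivity guarantees (via compactness of $\mathcal N(P)$) that this perturbation clears the zeros at large scale.
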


\begin{proof}
(i) Let $n=2N+1$.  Since the map $P^l$ has finitely many zeros on
$\S^{2N}$, there exists a unit sphere $\S^{2N-1}$ such that the
restriction $P^l\lceil\S^{2N-1}$ has no zeros. Here the sign
$\lceil$ denotes the restriction of a map to the corresponding
set. Since all the polynomials $P^l_j(\xi)$ are homogeneous, we
can assume without loss of generality that
$\S^{2N-1}:=\{x\in\S^{2N}:\ x_n=0\}$.

As in~\eqref{T},  we denote by $T=(T_1,\dots,T_{2N}):
\S^{2N-1}\to\R^{2N}$ the 'restriction' of the map $P^l$ to the
sphere $\S^{2N-1}$, that is,
 \begin{equation} \label{TT}
 T_{2j-1}(\xi):=\Re\
P_j^l(\xi)\lceil\S^{2N-1},\qquad
 T_{2j}(\xi):=\Im\ P_j^l(\xi)\lceil\S^{2N-1},\qquad j\in\{1,\dots,N\}.
\end{equation}
Since  $P^l\lceil\S_r^{2N-1}\neq 0$ for all $r>0$, the map
$$
T^r:= \frac{\(T_1,\dots,T_{2N}\)}{\|P^l\|}:
   \ \S_r^{2N-1}\to \S_r^{2N-1}
$$
is continuous. If $l$ is odd, then $P^l$ is odd:
$P^l(-\xi)=-P^l(\xi)$. Then by Theorem~\ref{borsuk_2} the maps
$T^r$ have odd degree $\deg T^r=2k+1$, and hence are homotopically
nontrivial (see~\cite{spenyer}).

Consider the restriction of the map $P=(P_1,\dots,P_N)$ to the
sphere $\S^{2N-1}_r$. We denote
$$
R_{2j-1}(\xi):=\Re\ P_j(\xi)\lceil\S_r^{2N-1},\qquad
R_{2j}(\xi):=\Im\ P_j(\xi)\lceil\S_r^{2N-1},\qquad
j\in\{1,\dots,N\}.
$$
For sufficiently large $r$ the maps
$$
T^r:\ \S^{2N-1}_r\to\R^{2N}\setminus\{0\}, \qquad
R^r:=(R_1,\dots,R_{2N}):\ \S^{2N-1}_r\to\R^{2N}\setminus\{0\}
$$
are homotopic in the space of continuous maps from $\S^{2N-1}_r$
into $\R^{2N}\setminus\{0\}$. Indeed, for $\xi=r\eta$,\;\
$\eta\in\S^1$, and large $r>0$ we have
\begin{equation}\label{Rnonzero}
R^r(\xi)= R^r(r\eta)=  r^l T^1(\eta) + O(r^{l-1}) = r^lT^1(\eta)(
1 + O(r^{-1})) \neq 0.
\end{equation}
Therefore, the maps  $R^r$ and  $T^r$ are homotopic in
$\R^{2N}\setminus\{0\}$ since by~\eqref{Rnonzero} the homotopy
$tR^r+(1-t)T^r$ does not vanish for large $r>0$:
$$tR^r+(1-t)T^r:\
\S_r^{2N-1}\to\R^{2N}\setminus\{0\}.
$$
Hence the map $R^r$ has the same degree as $T^r$, $\deg R^r=2k+1$,
and is also homotopically non-trivial. Thus, by
Theorem~\ref{step_otobr} any continuous extension of it into the
interior of the closed ball $B_r^{2N}$ has a zero. In particular,
the map
\begin{equation} \label{otobr_sfera}
 \wt{P^r}(\xi')=\wt{P^r}(\xi_1,\dots,\xi_{n-1}):=
P\(\xi_1,\dots,\xi_{n-1},\sqrt{r^2-|\xi'|^2}\):
B_r^{2N}\to\R^{2N},
\end{equation}
where $\xi':=(\xi_1,\dots,\xi_{n-1})$, which is a continuous
extension of $R^r: \S_r^{2N-1}\to\R^{2N}\setminus\{0\}$ into
$B_r^{2N}$, also has zeros.  Since the hemisphere
$\S^{2N}_{+r}:=\{x\in\S_r^{2N}:\ x_n\ge 0\}$ is homeomorphic to
the ball  $B^{2N}_r$,  the maps~\eqref{otobr_sfera} define in a
natural way maps $P\lceil\S_{+r}^{2N}=R^r\lceil\S_{+r}^{2N}$ of
the hemisphere $\S^{2N}_{+r}$ into $\R^{2N}$. Thus, the maps
$P\lceil\S_{+r}^{2N}$ have zeros for large $r$. This contradicts
Proposition \ref{prop_weak_coerc}, (i).

(ii) Now suppose $n>2N+1$. Setting  $\xi_k=0$ for
$k\in\{2N+2,\dots,n\}$ we consider the 'restricted' system
$\{\wt{P_j}(\xi_1,\dots,\xi_{2N+1},0,\dots,0)\}_1^N$. The map
$\wt{P^l}:=(\wt{P_1^l},\dots,\wt{P_N^l}):\R^{2N+1}\to\R^{2N}$ also
has a finite number of zeros on the sphere $\S^{2N}$. Moreover, by
Proposition~\ref{prop_weak_coerc}, (i) the zero set $N(P)$ of the
symbols of the system $\{P_j(\xi)\}_1^N$ is compact since the
system $\Pj$ is weakly coercive in $\WplRn$. The zero set
$N(\wt{P})$ of the restricted system $\{\wt{P_j}(\xi)\}_1^N$
remains compact in $\R^{2N+1}$. To complete the proof it remains
to repeat the reasoning in item (i) for the system
$\{\wt{P_j}(\xi)\}_1^N$.
\end{proof}

\begin{remark}
(i) We conjecture that the conclusion of Theorem~\ref{prop6} that
$l$ is even holds without additional assumptions on the system
$\Pj$.

(ii) The condition $n\ge 2N+1$ is essential in
Theorem~\ref{prop6}. For instance, if $n=2N$ then the system
$$
  P_1(D):=(D_1+iD_2)^l,\quad
  P_2(D):=(D_3+iD_4)^l,\quad\dots,\quad
  P_N(D):=(D_{n-1}+iD_n)^l
$$
is elliptic for any  $l$.

(iii) It is clear from the proof of Theorem~\ref{prop6} that the
condition that the map $P^l$ has only finitely many zeros on the
sphere $\S^{n-1}$ can be relaxed, instead only assuming that there
exists a sphere $\S^{n-2}$ free of zeros of $P^l$. However,
examples do exist where the latter condition is not fulfilled. For
instance, if $N=2$ and $n=5$, then the system
$$
  P_1(D):=(D_1+i)(D_2+i),\qquad P_2(D)=D_3^2+D_4^2+D_5^2
$$
is weakly coercive in
$\overset{\circ}{W}\rule{0pt}{2mm}_p^l(\R^5)$,\ $p\in[1,\iy]$,
although the restriction of the map $P^l=(\xi_1\xi_2,\;
\xi_3^2+\xi_4^2+\xi_5^2)$ to any sphere $\S^3$ has a zero.
\end{remark}

\subsection{A characterization of weakly coercive systems of operators with
constant coefficients in $\protect\WplRnInf$}

Here we obtain an analogue of Theorem~\ref{deLeu-Mir_intr} for the
case of a homogeneous system of operators with constant
coefficients. To this end we will use the procedure, described in
the following proposition, of 'restricting' an estimate to a
subspace.

\begin{prop}\label{suzh}
Let $Q(x,D)$ and $\PjxD$ be operators of the form~\eqref{Q_Pj}
with $L^\iy(\Om)$ coefficients and let
$(D',0):=(D_1,\dots,D_m,0,\dots,0)$. Then for any $m<n$ the
estimate~\eqref{200} with $p=\iy$ and $\Om=\R^n$ implies the
'restricted' estimate
\begin{equation} \label{suzh_est}
 \|Q(x,D',0)\td f\|_{L^\iy(\R^n)} \le C_1 \sum_{j=1}^N
\|P_j(x,D',0)\td f\|_{L^\iy(\R^n)}+C_2\|\td f\|_{L^\iy(\R^n)}
 \end{equation}
for all $\td f\in C_0^\iy(\R^m)$.  Moreover, if the operators $Q$
and $P_j$ have constant coefficients, then estimate~\eqref{200}
remains valid if all the operators are restricted to an arbitrary
subspace $E\subset\R^n$.
\end{prop}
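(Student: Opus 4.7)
The plan is to apply the full estimate~\eqref{200} to test functions of the form $f_R(x) := \tilde f(x')\phi_R(x'')$, and let the cutoff spread so as to exhaust $\R^{n-m}$. Fix $\phi \in C_0^\infty(\R^{n-m})$ with $0 \le \phi \le 1$ and $\phi \equiv 1$ on some neighbourhood of the origin, set $\phi_R(x'') := \phi(x''/R)$ for $R > 0$, and write $x = (x', x'')$ with $x' \in \R^m$, $x'' \in \R^{n-m}$. For any $\tilde f \in C_0^\infty(\R^m)$ the product $f_R$ lies in $C_0^\infty(\R^n)$, so~\eqref{200} with $p = \iy$ and $\Om = \R^n$ applies.

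To extract the main term, decompose any operator $A(x,D) = \sum_\alpha b_\alpha(x)D^\alpha$ of the form~\eqref{Q_Pj} according to the splitting $\alpha = (\alpha', \alpha'')$, $\alpha'\in\Z_+^m$, $\alpha''\in\Z_+^{n-m}$, as $A(x,D) = \sum_{\alpha''} A_{\alpha''}(x, D') D''^{\alpha''}$, where $D' := (D_1,\dots,D_m)$, $D'' := (D_{m+1},\dots,D_n)$ and $A_{\alpha''}(x, D')$ differentiates only in $x'$. Since $\tilde f$ is independent of $x''$,
\begin{equation*}
A(x,D) f_R(x) = A(x, D', 0)\tilde f(x')\cdot \phi_R(x'') + \sum_{|\alpha''|\ge 1} A_{\alpha''}(x, D')\tilde f(x')\cdot D''^{\alpha''}\phi_R(x'').
\end{equation*}
The identity $\|D''^{\alpha''}\phi_R\|_\iy = R^{-|\alpha''|}\|D''^{\alpha''}\phi\|_\iy$, combined with $L^\iy$-boundedness of the coefficients and the compact support of $\tilde f$, makes the last sum $O(R^{-1})$ in $L^\iy(\R^n)$ as $R\to\iy$. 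Since $\phi_R \equiv 1$ on $\{|x''| \le cR\}$ for some $c > 0$ and these slabs exhaust $\R^n$, an essential-supremum argument yields $\lim_{R\to\iy}\|g\cdot\phi_R\|_{L^\iy(\R^n)} = \|g\|_{L^\iy(\R^n)}$ for any bounded measurable $g$. Applied to $g = A(x,D',0)\tilde f$ with $A \in \{Q, P_1, \dots, P_N\}$, and together with $\|f_R\|_\iy \to \|\tilde f\|_{L^\iy(\R^n)}$, passing to the limit $R\to\iy$ in~\eqref{200} written for $f_R$ produces precisely~\eqref{suzh_est}.

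For the second assertion, a linear change of coordinates maps any $m$-dimensional subspace $E \subset \R^n$ onto the standard coordinate subspace $\{x_{m+1} = \dots = x_n = 0\}$; in the constant-coefficient case the transformed symbols are again constant-coefficient polynomials (obtained by precomposing $Q(\xi)$ and $P_j(\xi)$ with the adjoint of the change of basis), and~\eqref{200} is preserved because the $L^\iy(\R^n)$-norm and the class $C_0^\iy(\R^n)$ are invariant under invertible linear substitutions. An application of the first half of the proposition in the new coordinates, combined with the observation that the constant-coefficient expression $A(D',0)\tilde f$ depends only on $x'$, so that its $L^\iy(\R^n)$- and $L^\iy(\R^m)$-norms coincide, yields the desired restricted estimate on $E$. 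The main technical obstacle is the $L^\iy$-limit $\|g\cdot\phi_R\|_\iy \to \|g\|_\iy$; this rests on combining $\phi_R \equiv 1$ on the exhausting family $\{|x''|\le cR\}$ with a global bound on $g = A(x, D', 0)\tilde f$, which in turn uses the $L^\iy(\R^n)$-coefficient hypothesis and the compact support of $\tilde f$.
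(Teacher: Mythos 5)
Your proof is correct and takes essentially the same route as the paper: substitute $f_r(x)=\tilde f(x')\phi(x''/r)$ into the full estimate, observe that every $\xi''$-derivative of the cutoff contributes an $O(r^{-1})$ error in $L^\infty$ thanks to the bounded coefficients and the scaling of $D''^{\alpha''}\phi_r$, and let $r\to\infty$. The only cosmetic difference is that the paper records the exact identity $D^\alpha f_r=r^{-(\alpha_{m+1}+\dots+\alpha_n)}(D^\alpha f)_r$ rather than the grouped decomposition you write, and for the second assertion it uses an orthogonal (rather than a general linear) change of variables to move an arbitrary subspace $E$ to a coordinate one; both yield the same conclusion since $L^\infty$ and $C_0^\infty$ are invariant under invertible linear substitutions.
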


\begin{proof}
Let $\phi\in C_0^\iy(\R^{n-m})$ be a 'cutoff' function equal to
$1$ in a neighborhood of the origin. Consider functions $f\in
C_0^\iy(\R^n)$ of the following form:
\begin{equation} \label{new_prob}
f(x_1,\dots,x_n):= \td f(x_1,\dots,x_m) \phi(x_{m+1},\dots,
x_n),\qquad\text{where}\quad \td f\in C_0^\iy(\R^m).
\end{equation}
Further, for any $r>0$ and any function $f$ of the
form~\eqref{new_prob} we denote by $f_r$ the function
\begin{equation} \label{new_prob_r}
f_r(x):=f\(x_1,\dots,x_m,\frac{x_{m+1}}{r},\dots,\frac{x_n}{r}\)=
\td f(x_1,\dots,x_m) \phi\(\frac{x_{m+1}}{r},\dots,
\frac{x_n}{r}\).
\end{equation}
We substitute~\eqref{new_prob_r} into~\eqref{200}. For any
differential monomial $D^\al=D_1^{\al_1}\dots D_n^{\al_n}$ we have
$$
 D^\al f_r=r^{-(\al_{m+1}+\dots+\al_n)}(D^\al f)_r,
$$
hence in view of the estimates
$$
\|a_{j\al}(x) D^\al f_r\|_{L^\iy(\R^n)} \le
\|a_{j\al}(x)\|_{L^\iy(\R^n)} \|D^\al f_r\|_{L^\iy(\R^n)} \le C
r^{-(\al_{m+1}+\dots+\al_n)},
$$
passing to the limit as $r\to +\iy$ in the inequality obtained we
arrive at~\eqref{suzh_est}.

If the operators $Q$ and $P_j$ have constant coefficients, then
every $L^\iy(\R^n)$-norm in~\eqref{suzh_est} is equal to the
corresponding $L^\iy(\R^m)$-norm. This proves estimate~\eqref{200}
holds after 'restricting' all the operators to the subspace
$E=\Span\{\xi_1,\dots,\xi_m\}$. Since an orthogonal change of the
variables $\xi_1,\dots,\xi_n$ preserves the original
estimate~\eqref{200}, the $m$-dimensional subspace $E$ can be
arbitrary.
\end{proof}

\begin{definition}
A subspace $E\subset\R^n$ is said to be \emph{coordinate} if it
has the form $E=\{x=(x_1,\dots, x_n): x_{i_1}=\dots=x_{i_k}=0\}$,
where $i_1,\dots,i_k\in\{1,\dots,n\}$.
\end{definition}

We denote by $P(\xi)\lceil E$ the restriction of a polynomial
$P(\xi)$ to a coordinate subspace $E$ and by $P(D)\lceil E$ the
corresponding operator.

\begin{cor}\label{suzhenie}
If a system $\Pj$ is weakly coercive in the isotropic space
$\WplRnInf$, then the system $\{P_j(D)\lceil E\}_1^N$ remains
weakly coercive in $\overset{\circ}{W}\rule{0pt}{2mm}_\infty^l(E)$
after restriction to an arbitrary coordinate subspace
$E\subset\R^n$.
\end{cor}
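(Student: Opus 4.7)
The plan is to derive the restricted weak coercivity directly from Proposition~\ref{suzh} by applying it monomial-by-monomial. First I would reduce to the case when $E$ is the standard coordinate subspace $E=\{x\in\R^n : x_{m+1}=\dots=x_n=0\}$, identifying $E$ with $\R^m$; this is legitimate because the restriction of a polynomial to a coordinate subspace is the operation of setting some of its arguments to zero, and the set of coordinate subspaces is closed under coordinate permutations. Under this identification, $C_0^\iy(E)=C_0^\iy(\R^m)$, the restricted operator $P_j(D)\lceil E$ has symbol $P_j(\xi_1,\dots,\xi_m,0,\dots,0)=P_j(D',0)$, and for any function $\td f\in C_0^\iy(\R^m)$ viewed inside $\R^n$ the norms $\|\cdot\|_{L^\iy(\R^m)}$ and $\|\cdot\|_{L^\iy(\R^n)}$ coincide.

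Next, from the weak coercivity estimate~\eqref{10'} for $\Pj$ in $\WplRnInf$, since every summand on the left-hand side is non-negative, I obtain for each multi-index $\al$ with $|\al|<l$ a single-monomial estimate
\begin{equation*}
 \|D^\al f\|_{L^\iy(\R^n)}\le C_1\sum_{j=1}^N \|P_j(D)f\|_{L^\iy(\R^n)}+C_2\|f\|_{L^\iy(\R^n)},\qquad f\in C_0^\iy(\R^n).
\end{equation*}
This is precisely an instance of~\eqref{200} with $Q(D)=D^\al$, so Proposition~\ref{suzh} applies. Since the coefficients are constant, its 'constant-coefficient' part yields that the same estimate persists after restricting all operators to the coordinate subspace~$E$:
\begin{equation*}
 \|D^\al\lceil E\,\td f\|_{L^\iy(E)}\le C_1\sum_{j=1}^N \|P_j(D)\lceil E\,\td f\|_{L^\iy(E)}+C_2\|\td f\|_{L^\iy(E)},\qquad \td f\in C_0^\iy(E).
\end{equation*}

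Finally, I restrict attention to those multi-indices $\al$ whose support lies in $\{1,\dots,m\}$, so that $D^\al\lceil E=D^\al$ as an operator on $C_0^\iy(E)$. Summing the resulting estimates over all such $\al$ with $|\al|<l$ and absorbing constants gives
\begin{equation*}
 \sum_{|\al|<l}\|D^\al \td f\|_{L^\iy(E)}\le C_1'\sum_{j=1}^N \|P_j(D)\lceil E\,\td f\|_{L^\iy(E)}+C_2'\|\td f\|_{L^\iy(E)},
\end{equation*}
which is exactly the weak coercivity of $\{P_j(D)\lceil E\}_1^N$ in $\overset{\circ}{W}\rule{0pt}{2mm}_\iy^l(E)$. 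There is no real obstacle here: the entire argument is a bookkeeping reduction that uses Proposition~\ref{suzh} as a black box, and the only point worth emphasizing is that coordinate-ness of $E$ is what guarantees that the 'outside' derivatives $D_{m+1},\dots,D_n$ do not appear on the left-hand side after the rescaling argument of Proposition~\ref{suzh}.
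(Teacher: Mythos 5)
Your argument is correct and is essentially the same reduction the paper intends: Corollary~\ref{suzhenie} is stated in the paper as a direct consequence of Proposition~\ref{suzh}, applied termwise to each monomial $D^\al$ with $|\al|<l$ (each such estimate is an instance of~\eqref{200}), after which the constant-coefficient clause of Proposition~\ref{suzh} converts $L^\iy(\R^n)$-norms of functions of $(x_1,\dots,x_m)$ into $L^\iy(E)$-norms. The only refinement worth noting is that you do not even need permutation-closure separately, since the paper's Proposition~\ref{suzh} already allows an arbitrary (not merely coordinate) subspace when the coefficients are constant, but your handling of coordinate subspaces is perfectly adequate for the statement as given.
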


\begin{remark}
We emphasize that the coefficients of the restricted operators
$Q(x,D)$ and $\PjxD$ depend on all $n$ variables as before, while
the differentiation is performed only with respect to the first
$m$ variables. Note also that functions $f\in C_0^\iy(\R^m)$ are
not compactly supported in $\R^n$.
\end{remark}

The following result, announced in~\cite{dan2004}, presents an
analogue of Theorem~\ref{deLeu-Mir_intr} in the case of a
homogeneous system of operators.

\begin{theorem}\label{odnorod}
Suppose $l\ge 2$ and let $\{P_j(D)\}^N_1$  be a system of
operators with constant coefficients of order $l$ satisfying the
following conditions:

\emph{(i)} $n\ge 2N+1$;

\emph{(ii)} the polynomials  $\{P_j^l(\xi)\}_1^N$ restricted to an
arbitrary two-dimensional subspace of  $\R^n$ remain linearly
independent.

Then the system  $\{P_j(D)\}^N_1$  is weakly coercive in the
isotropic Sobolev space $\WplRnInf$ if and only if it is elliptic.
\end{theorem}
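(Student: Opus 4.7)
The implication \emph{elliptic $\Rightarrow$ weakly coercive in $\WplRnInf$} is immediate from Theorem~\ref{th2}. For the converse I argue by contradiction: assume $\Pj$ is weakly coercive but not elliptic, and fix $\xi^0 \in \R^n \setminus \{0\}$ with $P_j^l(\xi^0) = 0$ for $j=1,\dots,N$.

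\emph{Step 1 (topological input).} Proposition~\ref{prop_weak_coerc}(iii) applies since $n \ge 2N+1$, so the real Jacobian of $P^l = (\Re P_1^l, \Im P_1^l, \dots, \Re P_N^l, \Im P_N^l)$ at $\xi^0$ has rank strictly smaller than $2N$. Consequently the kernel $K := \ker dP^l(\xi^0)$ satisfies $\dim K \ge n - 2N + 1 \ge 2$. Euler's identity puts $\xi^0 \in K$, so I can pick $v \in K$ not parallel to $\xi^0$ and form the two-dimensional subspace $V := \Span_\R(\xi^0, v) \subset \R^n$.

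\emph{Step 2 (planar restriction).} By the constant-coefficient form of Proposition~\ref{suzh}, the restricted system $\{P_j(D)\lceil V\}_1^N$ is weakly coercive in $\overset{\circ}{W}\rule{0pt}{2mm}^l_\iy(V) \cong \overset{\circ}{W}\rule{0pt}{2mm}^l_\iy(\R^2)$. Assumption (ii) guarantees that the principal parts $\{P_j^l \lceil V\}_1^N$ are linearly independent on $V$. Both directional derivatives of each $P_j^l$ at $\xi^0$ vanish (along $\xi^0$ by Euler, along $v$ because $v \in K$), so $\xi^0$ is a zero of multiplicity at least two of every $P_j^l \lceil V$.

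\emph{Step 3 (anisotropic setup).} Fix coordinates $(\eta_1, \eta_2)$ on $V$ with $\xi^0 = (0, 1)$ and $v = (1, 0)$. The double zero at $\xi^0$ kills the $\eta_2^l$ and $\eta_1 \eta_2^{l-1}$ coefficients in each $P_j^l \lceil V$. Let $\widetilde k \ge 2$ be the smallest integer for which some $P_j^l \lceil V$ has a nonzero coefficient $a_{j, (\widetilde k, l - \widetilde k)}$ of $\eta_1^{\widetilde k} \eta_2^{l - \widetilde k}$; such $\widetilde k$ exists by (ii). Following~\eqref{vec}, introduce the weight $l' := (\widetilde k(l-1)/(\widetilde k - 1),\, l-1)$. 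As in the proof of Proposition~\ref{prop_weak_coerc}(iv), the $l'$-principal form of $P_j(D)\lceil V$ is
\[
P_j^{l'}(\eta) = c_{0,j}\,\eta_2^{l-1} + a_{j, (\widetilde k, l - \widetilde k)}\,\eta_1^{\widetilde k} \eta_2^{l - \widetilde k},
\]
where $c_{0,j}$ is the coefficient of $\eta_2^{l-1}$ in the $(l-1)$-order part of $P_j \lceil V$.

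\emph{Step 4 (closing the argument).} Apply the anisotropic variant of Proposition~\ref{mal_neodn} (Remark~\ref{rem_mal_neodn}) to the lower-order monomial $Q(D) = D_2^{l-1}$: since $|Q : l'| = 1$, one has $Q^{l'} = \eta_2^{l-1}$, yielding the identity
\[
\eta_2^{l-1} \;=\; \sum_{j=1}^N \lambda_j\bigl(c_{0,j}\,\eta_2^{l-1} + a_{j,(\widetilde k, l-\widetilde k)}\,\eta_1^{\widetilde k}\eta_2^{l-\widetilde k}\bigr).
\]
The $\eta_1^{\widetilde k}\eta_2^{l-\widetilde k}$ coefficient gives $\sum_j \lambda_j a_{j,(\widetilde k, l-\widetilde k)} = 0$, while the $\eta_2^{l-1}$ coefficient gives $\sum_j \lambda_j c_{0,j} = 1$. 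To convert these into a contradiction with the minimality of $\widetilde k$ I combine them with the arguments from~\cite{LeeMir} that produced Proposition~\ref{c_j} and Lemma~\ref{eberlein}: applying Proposition~\ref{Q=MP+N} to all lower-order monomials with $|\beta:l'| \le 1$ and averaging the resulting bounded Fourier--Stieltjes multipliers via Eberlein's theorem, the linear independence hypothesis (ii)---which survives the restriction to $V$---forces $a_{j,(\widetilde k,l-\widetilde k)} = 0$ for every $j$, contradicting the choice of $\widetilde k$.

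\emph{Main obstacle.} The delicate point is Step 4: in the $N = 1$ setting of Proposition~\ref{prop_weak_coerc}(iv) a single identity already closes the loop, whereas for $N > 1$ one has to squeeze enough independent algebraic constraints out of the anisotropic Proposition~\ref{mal_neodn} and the Eberlein-averaging scheme of~\cite{LeeMir}, and then use assumption (ii) in its full strength (linear independence on every two-dimensional subspace, not just on $\R^n$) to forbid the residual free parameters.
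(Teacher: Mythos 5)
Your Steps 1--2 match the paper in spirit (the paper fixes $\xi^0=(1,0,\ldots,0)$, observes $\partial_{\xi_1}P_j^l(\xi^0)=0$ from homogeneity, and uses Proposition~\ref{prop_weak_coerc}(iii) to find a second direction $\la$ with $\la_1=0$ along which all $\partial P_j^l$ vanish; your kernel-of-the-Jacobian phrasing is the same choice). The place where the attempt diverges --- and where it breaks --- is Steps~3--4.

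The anisotropic weight $l'$ of~\eqref{vec}, borrowed from the proof of Proposition~\ref{prop_weak_coerc}(iv), is a device that works for $N=1$ precisely because the single identity obtained from Proposition~\ref{mal_neodn} already closes the argument ($\la c_0=1$ and $\la a=0$ force $a=0$). For $N>1$ this collapses: the $l'$-principal forms $P_j^{l'}(\eta)=c_{0,j}\eta_2^{l-1}+a_{j,(\widetilde k,l-\widetilde k)}\eta_1^{\widetilde k}\eta_2^{l-\widetilde k}$ all lie in a two-dimensional space of polynomials, so for $N\ge 3$ they cannot be linearly independent and Proposition~\ref{c_j} is not applicable to this $l'$-system. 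Proposition~\ref{mal_neodn} alone gives exactly two scalar equations $\sum_j\la_j c_{0,j}=1$, $\sum_j\la_j a_{j,(\widetilde k,l-\widetilde k)}=0$ in $N$ unknowns $\la_j$ (which are not even uniquely determined when the $P_j^{l'}$ are dependent), and no contradiction follows. Your closing paragraph correctly senses this obstacle, but the proposed remedy --- "averaging the resulting bounded Fourier--Stieltjes multipliers via Eberlein's theorem" inside the anisotropic framework --- does not obviously produce the missing constraints, since the $l'$-quasihomogeneous setting has discarded the linear independence that assumption (ii) provides.

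The paper takes a different route after the planar restriction: it stays isotropic on $E=\operatorname{span}\{\xi_1,\xi_2\}$ and applies Proposition~\ref{Q=MP+N} directly to $Q=D_1^{l-1}$, obtaining the multiplier identity~\eqref{y}. Dividing by $\xi_1^{l-1}$ and letting $\xi_1\to\iy$ with $\xi_2$ fixed (using the normalization $P_1^{l-1}(\xi^0)=1$, $P_j^{l-1}(\xi^0)=0$ for $j\ge 2$, which is arranged beforehand via the algebraic inequality from Proposition~\ref{prop_alg_ineq}) forces $\lim_{\xi_1\to\iy}M_1(\xi_1,\xi^0_2)=1$. On the other hand, assumption~(ii) in its original degree-$l$ form makes the $P_j^l|_E$ linearly independent, so Proposition~\ref{c_j} applies to the \emph{isotropic} system on $E$: since $Q^l\equiv 0$ and the $P_j^l|_E$ are independent, the coefficients $\la_j$ vanish, hence $\mu_1(0)=0$, and Eberlein's lemma then contradicts the limit $1$ just computed. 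In short, the crucial point is to keep the full multipliers $M_j(\xi)$ (not just their leading behaviour $\la_j$) and to use assumption~(ii) at degree $l$, not at the collapsed anisotropic degree $l'$. As written, your Step~4 is a genuine gap, not a routine computation.
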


\begin{proof} The sufficiency is immediate from Theorem~\ref{th2}.

\noindent \textsl{Necessity.}  Let $\Pj$ be a weakly coercive
system in $\WplRnInf$ that is not elliptic, that is,
$P^l_j(\xi^0)=0$,\; $j\in\{1,\dots,N\}$, for some
$\xi^0=(\xi^0_1,\dots,\xi^0_n)\in\R^n\setminus\{0\}$. Changing the
variables $\xi_1,\dots,\xi_n$ if necessary, we can assume that
$\xi^0=(1,0,\dots,0)$. This means that in each $P_j(\xi)$ the
coefficients of $\xi_1^l$ are zero.

Let $P_j^{l-1}(D):=\sum_{|\al|=l-1} a_{j\al}D^\al$,\
$j\in\{1,\dots,N\}$. We have $P_j^{l-1}(\xi^0)\neq 0$ for some
$j\in\{1,\dots,N\}$ since otherwise, after the substitution
$\xi=\xi^0 t$,\ $t>0$, in the algebraic
inequality~\eqref{alg_ineq}, which follows from the
estimate~\eqref{200} with $Q=D_1^{l-1}$ (see
Proposition~\ref{prop_alg_ineq}), we arrive at a contradiction as
$t\to +\iy$.

After a linear transformation of the system  $\{P_j(\xi)\}_1^N$ we
can assume that
\begin{equation} \label{8a}
P_1^{l-1}(\xi^0)=1,\quad P_2^{l-1}(\xi^0)=0,\quad\dots,\quad
P_N^{l-1}(\xi^0)=0.
\end{equation}
Since the monomial $\xi_1^l$ is missing from every of polynomials
$P_j^l$ and since these polynomials are homogeneous, it follows
that
\begin{equation} \label{m1}
\frac{\partial P_j^l}{\partial\xi_1}(\xi^0)=0,\qquad
j\in\{1,\dots,N\}.
 \end{equation}

Further, because $n\ge 2N+1$, by
Proposition~\ref{prop_weak_coerc}, (iii) the Jacobi matrix of the
map $P^l=\(P_1^l,\dots,P_N^l\):\R^n\to\R^{2N}$ at $\xi^0$ has rank
at most $2N-1$. This means that there exists a vector
$\la:=(0,\la_2,\dots,\la_n)\in\R^n\setminus\{0\}$ such that
$$
  \sum_{k=2}^n \la_k \frac{\partial
  P_j^l}{\partial\xi_k}(\xi^0)=0, \qquad j\in\{1,\dots,N\}.
$$
If necessary making an orthogonal change of the variables
$\xi_1,\dots,\xi_n$ of the form
$$
   \xi'_k=\sum_{r=1}^n c_{kr}\xi_r,\qquad k,\ r\in\{1,\dots,n\},
$$
where  $C:=\(c_{kr}\)_{n\times n}$ is an orthogonal matrix with
the first two rows consisting of the coordinates of the vectors
$e_1=(1,0,\dots,0)$ and $e_2=
\la/|\la|=(0,\la_2/|\la|,\dots,\la_n/|\la|)$, we obtain
$$
 \frac{\partial P_j^l}{\partial e_1}(\xi^0)=
 \frac{\partial P_j^l}{\partial \xi_1}(\xi^0)=0,\qquad
 \frac{\partial P_j^l}{\partial e_2}(\xi^0)=\sum_{k=2}^n
 \frac{\la_k}{|\la_k|} \frac{\partial P_j^l}{\partial \xi_k}(\xi^0)=0.
$$
In addition to~\eqref{m1} we may assume that
\begin{equation} \label{m2}
\frac{\partial P_j^l}{\partial\xi_2}(\xi^0)=0,\qquad
j\in\{1,\dots,N\}.
\end{equation}
Relations~\eqref{m1} and~\eqref{m2} mean that $\{P_j(\xi)\}_1^N$
contains neither monomials $\xi_1^l$ nor $\xi_1^{l-1}\xi_2$.

Consider the 'restriction' of the system  $\Pj$ to the subspace
$E=\Span\{\xi_1,\xi_2\}$. By Corollary~\ref{suzhenie} it remains
weakly coercive in
$\overset{\circ}{W}\rule{0pt}{2mm}_\infty^l(\R^2)$. We keep the
same notation for the 'restricted' objects. Taking account of
relations~\eqref{8a}--\eqref{m2} and applying
Proposition~\ref{Q=MP+N} to the operators  $\Pj$ 'restricted' to
$E$ we obtain
\begin{equation}\label{y}
\xi_1^{l-1}= \sum_{j=1}^N M_j(\xi)\[a_j\xi_1^{l-2}\xi_2^2+ \dots +
\delta_j^1 \xi_1^{l-1}+\ldots \]+M_{N+1}(\xi),
\end{equation}
where the $M_j(\cdot)$,\ $j\in\{1,\dots,N+1\}$, are multipliers on
$L^\iy(\R^2)$ and $\delta_j^1$ is the Kronecker delta.

Dividing both sides of~\eqref{y} by $\xi_1^{l-1}$, we arrive at
\begin{equation}\label{yy}
\lim_{\xi_1\to +\infty} M_1(\xi_1, \xi^0_2)=1,\qquad \xi^0_2
=\const\in\R.
\end{equation}
We claim that~\eqref{yy} contradicts Lemma~\ref{eberlein}. Indeed,
by assumption the leading forms $P_j^l(\xi)$ remain linearly
independent after 'restriction' to $E$; therefore, by
Proposition~\ref{c_j},\ $\mu_j(0)=0$,\ $j\in\{1,\dots,N\}$, where
the $\mu_j$ are the finite measures in the integral
representation~\eqref{2.7A} for the multipliers $M_j=\hat\mu_j$
involved in~\eqref{y}. By Proposition~\ref{eberlein} some convex
combinations of 'shifts' of the function  $M_1(\xi)$ converge
uniformly to the constant function $\mu_1(0)=0$, that is,
\begin{equation} \label{m3}
\begin{aligned}
  &\sum_{k=1}^m c_k M_1\(\xi-\zeta^{(k)}\) \rightrightarrows 0; \\
   \sum_{k=1}^m c_k=1,\qquad & c_k>0, \quad
  \zeta^{(k)}\in\R^2,\quad k\in\{1,\dots,m\}.
\end{aligned}
\end{equation}
It follows from~\eqref{m3} that for $\eps=1/2$ there exist $R>0$
and $a_1,\dots,a_m\in\R$ such that
\begin{equation} \label{m4}
  \left| \sum_{k=1}^m c_k M_1(\xi_1,a_k)
  \right|\le \frac 1 2,\qquad \xi_1>R.
\end{equation}
But inequality~\eqref{m4} contradicts relation~\eqref{yy}. The
proof is complete.
\end{proof}

\begin{remark}
(i) Condition  (ii) of Theorem~\ref{odnorod} is essential. For
instance, condition (i) holds for the system
$P_1(\xi):=\xi_1^2+\xi_2^2 + \xi_3^2$,\ $P_2(\xi):=(\xi_4 +
i)(\xi_5+i)$\ ($n=2N+1= 5$), but condition (ii) fails: the
restrictions of the polynomials $\{P_j^2(\xi)\}_1^2$ to the
two-dimensional subspace $\Span\{\xi_1,\xi_2\}$ are linearly
dependent. The system  $\{P_j(D)\}_1^2$ is weakly coercive in
$\overset{\circ}{W}\rule{0pt}{2mm}_\iy^2(\R^5)$ but not elliptic.

At the same time, condition (ii) in Theorem~\ref{odnorod} is not
necessary.  For instance, the system $P_1(\xi):=\xi_1^2+\xi_2^2$,\
$P_2(\xi):=\xi_3^2+\xi_4^2+\xi_5^2$ is weakly coercive in
$\overset{\circ}{W}\rule{0pt}{2mm}_\iy^2(\R^5)$, although the
restrictions of the polynomials $\{P_j(\xi)\}_1^2$ to the subspace
$\Span\{\xi_1,\xi_2\}$ are linearly dependent.

(ii)  We do not have any examples of systems of operators which
are weakly coercive in
$\overset{\circ}{W}\rule{0pt}{2mm}_\iy^2(\R^n)$, but not elliptic
for $N>1$, for which condition  (i) fails but (ii) holds. However,
it is easy to construct systems failing both conditions (i) and
(ii) of Theorem~\ref{odnorod}. For instance, for $n=2N$ both
conditions (i) and (ii) of Theorem \ref{odnorod} fail for the
system $P_j(\xi):=(\xi_{2j-1}+i)(\xi_{2j}+i)$,\
$j\in\{1,\dots,N\}$. This system is also weakly coercive in
$\overset{\circ}{W}\rule{0pt}{2mm}_\iy^2(\R^n)$, but not elliptic.
\end{remark}

\subsection{A generalization of the de Leeuw-Mirkil theorem to
operators with variable coefficients}

By Theorem~\ref{deLeu-Mir_intr}, which is due to de Leeuw and
Mirkil, if an operator $P(D)$ is weakly coercive in $\WplRnInf$
for $n\ge 3$, then it is elliptic. The next theorem extends this
result to operators with variable coefficients.

\begin{theorem} \label{de_Leeuw_vary}
Suppose that $l\ge 2$,\ $n\ge 3$, and let $P(x,D)$ be a
differential operator of order $l\ge 2$ in which $a_\al(\cdot)\in
L^\iy(\R^n)$ for $|\al|\le l-1$,\ $a_\al(\cdot)\in C^1(\R^n)$ for
$|\al|=l-1$, and $a_\al=\const$ for $|\al|=l$, that is,
$P^l(x,D)=P^l(D)$.

Then the operator P(x,D) is weakly coercive in the isotropic
Sobolev space $\WplRnInf$ if and only if it is elliptic.
\end{theorem}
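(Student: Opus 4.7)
The plan is to combine the topological rigidity from Proposition~\ref{prop_weak_coerc}(iii)--(iv) with an anisotropic application of Proposition~\ref{mal_neodn} in the style of the proof of Proposition~\ref{prop_weak_coerc}(iv). Sufficiency is immediate from Theorem~\ref{th2}, so I would focus on necessity and argue by contradiction: suppose $P(x,D)$ is weakly coercive but $P^l(\xi^0)=0$ for some $\xi^0\in\R^n\setminus\{0\}$. Proposition~\ref{prop_weak_coerc}(iv) gives $\nabla P^l(\xi^0)\neq 0$, and since $n\ge 3=2N+1$ with $N=1$, part~(iii) of that proposition forces the real Jacobi matrix of $(\Re P^l,\Im P^l)$ at $\xi^0$ to have rank at most $1$, hence exactly $1$. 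Thus $\nabla\Re P^l(\xi^0)$ and $\nabla\Im P^l(\xi^0)$ would be proportional real vectors, both perpendicular to $\xi^0$ by Euler's identity. After an orthogonal change of coordinates (which preserves the class of operators), I would assume $\xi^0=e_1$ and this common direction equals $e_2$; equivalently, $P^l$ contains $\xi_1^{l-1}\xi_2$ with nonzero coefficient but contains neither $\xi_1^l$ nor $\xi_1^{l-1}\xi_j$ for $j\ge 3$.

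The central step is to restrict the weak coercivity estimate to the hyperplane $\{\xi_2=0\}$ via Proposition~\ref{suzh}: for $\tilde f\in C_0^\iy(\R^{n-1})$ depending on $x_1,x_3,\dots,x_n$, the monomial $D_1^{l-1}$ remains subordinated to $P(x,D_1,0,D_3,\dots,D_n)$, and Remark~\ref{rem_mal_neodn}(i) then makes the anisotropic version of Proposition~\ref{mal_neodn} available. Let $k\ge 2$ be the least integer for which $P^l$ contains a monomial $\xi^\alpha$ with $|\alpha|=l$, $\alpha_2=0$ and $\alpha_1=l-k$; such $k$ must be $\ge 2$ because the reductions above exclude $k=1$. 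Taking the anisotropic weight $l'$ on the variables $(\xi_1,\xi_3,\dots,\xi_n)$ with $l'_1=l-1$ and $l'_j=k(l-1)/(k-1)$ for $j\ge 3$ places $\xi_1^{l-1}$ together with exactly those degree-$l$ monomials of $P^l|_{\xi_2=0}$ having $\alpha_1=l-k$ on the hyperplane $\{|\alpha:l'|=1\}$, while keeping every other monomial of the restricted operator strictly below. Proposition~\ref{mal_neodn} would then yield an identity of the form $\xi_1^{l-1}=\lambda(x)[c_0(x)\xi_1^{l-1}+\sum a_\alpha\xi^\alpha]$, forcing $\lambda c_0\equiv 1$ and $a_\alpha=0$ for every $\alpha$ in the sum, contradicting the minimality of $k$. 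Consequently no such $k$ exists, $P^l(\xi_1,0,\xi_3,\dots,\xi_n)\equiv 0$, and so $\xi_2$ divides $P^l$: write $P^l=\xi_2\cdot R$ with $R$ homogeneous of degree $l-1$.

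To close the argument I would iterate at a zero $\xi^1$ of $P^l$ lying off $\{\xi_2=0\}$; such a zero must exist because otherwise $R$ would be elliptic on $\R^n$, a situation excluded by running the same anisotropic mechanism together with Proposition~\ref{prop_weak_coerc}(iv) and parity considerations on $R|_{\xi_2=0}$ in $n-1\ge 2$ variables. The rank-one condition at $\xi^1$ would then provide a gradient direction linearly independent of $e_2$, producing a second linear factor $L$ of $P^l$ with $P^l=\xi_2\cdot L\cdot S$. Since $n\ge 3$, the real subspace $\{\xi_2=0\}\cap\{L=0\}$ has dimension at least $n-2\ge 1$ and hence contains a nonzero real point; at such a point the product rule forces $\nabla P^l=0$, contradicting Proposition~\ref{prop_weak_coerc}(iv). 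The main obstacle will be engineering the anisotropic weight $l'$ so as to dominate every monomial of the restricted operator; the crucial simplification is that restriction to $\{\xi_2=0\}$ eliminates the obstructive monomial $\xi_1^{l-1}\xi_2$, which would otherwise force $|\alpha:l'|>1$ for every admissible choice of $l'$.
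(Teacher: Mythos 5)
Your main line of attack mirrors the paper's: Euler's identity plus Proposition~\ref{prop_weak_coerc}(iii) pin the Jacobi matrix of $(\Re P^l,\Im P^l)$ at $\xi^0$ to rank at most~$1$, and then an anisotropic (quasi-homogeneous) application of Proposition~\ref{mal_neodn} after restriction kills the offending monomials. The two differences are cosmetic but consequential: the paper rotates so that \emph{both} $\partial P^l/\partial\xi_1$ and $\partial P^l/\partial\xi_2$ vanish at $\xi^0$ and then restricts to $\Span\{\xi_1,\xi_2\}$, whereas you align the (rank-one) gradient direction with $e_2$ so that $\partial P^l/\partial\xi_j(\xi^0)=0$ for $j\neq 2$ and then restrict to the hyperplane $\{\xi_2=0\}$. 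Your choice of the weight $l'$ on $(\xi_1,\xi_3,\dots,\xi_n)$ is correct, and the minimality of $k$ combined with Proposition~\ref{mal_neodn} does produce the contradiction in the generic case, exactly as in the paper. The trade-off is that the paper's two-dimensional restriction can never be identically zero without an immediate contradiction, whereas your hyperplane restriction can degenerate: $P^l(\xi_1,0,\xi_3,\dots,\xi_n)\equiv 0$ is not a priori excluded.

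It is precisely this degenerate case where your argument has a genuine gap. You write $P^l=\xi_2 R$ and assert that if all real zeros of $P^l$ lie on $\{\xi_2=0\}$ then $R$ is elliptic on $\R^n$. This is false: $R$ may well vanish at a nonzero point $\xi^*$ with $\xi^*_2=0$ (e.g.\ $R=\xi_1^2-\xi_3^2$), and such a zero of $R$ produces no zero of $P^l$ off the hyperplane, so the dichotomy ``a zero of $P^l$ off $\{\xi_2=0\}$ exists, or $R$ is elliptic'' is simply not available. The subsequent iteration (``parity considerations on $R|_{\xi_2=0}$'', extracting a second linear factor $L$, intersecting $\{\xi_2=0\}\cap\{L=0\}$) is built on this faulty fork and is moreover too vague to check — you are implicitly assuming the zeros of $P^l$ are transverse linear factors, which does not follow from Proposition~\ref{prop_weak_coerc}(iv) in dimension $n\ge 3$. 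Ironically, the degenerate case $P^l(\xi_1,0,\xi_3,\dots,\xi_n)\equiv 0$ has a one-line resolution you already have the tools for: the restricted operator $P(x,D_1,0,D_3,\dots,D_n)$ then has order at most $l-1$ in the $n-1\ge 2$ variables $(\xi_1,\xi_3,\dots,\xi_n)$, and applying Proposition~\ref{mal_neodn} (with Remark~\ref{rem_mal_neodn}(i) and the isotropic weight $(l-1,\dots,l-1)$) forces $\xi^\al=\la_\al(x)P^{l-1}(x,\xi_1,0,\xi_3,\dots,\xi_n)$ for \emph{every} multi-index $\al$ with $|\al|=l-1$, which is impossible since the monomials $\xi^\al$ are linearly independent and there is more than one of them. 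This is exactly how the paper rules out $P^l(\xi_1,\xi_2,0,\dots,0)\equiv 0$ in its two-dimensional restriction, so your route can be repaired at no cost — but as written, the closing paragraph is wrong.
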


\begin{proof} The necessity follows from Theorem~\ref{th2}.

\noindent \textsl{Sufficiency.} Suppose that the operator $P(x,D)$
is weakly coercive in $\WplRnInf$, that is,
\begin{equation} \label{1a}
\sum_{|\al|<l} \|D^\al f\|_{L^\iy(\R^n)} \le C_1
\|P(x,D)f\|_{L^\iy(\R^n)} +C_2 \|f\|_{L^\iy(\R^n)},\qquad f\in
C_0^\iy(\R^n).
\end{equation}
If $P$ is not elliptic, then $P^l(\xi^0)=0$ for
$\xi^0=(\xi^0_1,\dots,\xi^0_n)\in\R^n\setminus\{0\}$. Changing the
variables $\xi_1,\dots,\xi_n$ if necessary, we can assume that
$\xi^0=(1,0,\dots,0)$.

By Euler's identity $\sum_{k=1}^n \xi_k (\partial P^l/\partial
\xi_k)=nP^l$ for the homogeneous polynomial $P^l(\xi)$, the
condition $P^l(\xi^0)=0$ implies the relation $(\partial
P^l/\partial \xi_1)(\xi^0)=0$. However, since $n\ge 3$, it follows
from Proposition~\ref{prop_weak_coerc}, (iii) that the Jacobi
matrix of the map $P^l=(\Re P^l,\ \Im P^l): \R^n\to\R^2$ at the
point $\xi^0$ has rank at most $1$. Making a suitable linear
change of the coordinates $\xi_2,\dots,\xi_n$ if necessary (see
the proof of Theorem~\ref{odnorod}) we can assume that the second
column of the Jacobi matrix is zero, that is, $(\partial
P^l/\partial\xi_2)(\xi^0)=0$. Thus, the symbol $P(x,\xi)$ of the
operator $P(x,D)$ does not contain the monomials $\xi_1^l$ or
$\xi_1^{l-1}\xi_2$.

Further, combining Proposition~\ref{suzh} with estimate~\eqref{1a}
yields the 'restricted' estimate
\begin{equation} \label{1b}
\sum_{\al_1+\al_2<l} \|D_1^{\al_1}D_2^{\al_2}f\|_{L^\iy(\R^n)} \le
C_1 \|P(x,D_1,D_2,0,\dots,0)f\|_{L^\iy(\R^n)} + C_2
\|f\|_{L^\iy(\R^n)}
\end{equation}
for all $f\in C_0^\iy(\R^2)$. Note that
$P^l(\xi_1,\xi_2,0,\dots,0)\not\equiv 0$, for otherwise
estimate~\eqref{1b} (see Proposition~\ref{mal_neodn} and
Remark~\ref{rem_mal_neodn}, (i)) implies the relations
$$
\xi^\al=\la_\al(x) P^{l-1}(x,\xi_1,\xi_2,0,\dots,0)
$$
for all $|\al|=l-1$, which is obviously impossible. Hence there
exists $k\ (\ge 2)$ such that the coefficient of
$\xi_1^{l-k}\xi_2^k$ in the polynomial
$P^l(\xi_1,\xi_2,0,\dots,0)$ differs from zero.

We take the minimum such $k$ and draw a line $\omega$ through the
points $(l-1,0)$ and $(l-k,k)$. It is not vertical since $k\ge 2$.
We denote by $l':=(l_1',l_2')$ the vector with components
$l_1':=l-1$ and $l_2':=k(l-1)/(k-1)$ equal to the lengths of the
intercepts of $\omega$ with the coordinate axes. The 'restricted'
operator has the following form:
$$
  P(x,D_1,D_2,0,\dots,0)=\sum_{\al_1/l_1'+\al_2/l_2'\le 1}
  a_{(\al_1,\al_2)}(x)D_1^{\al_1} D_2^{\al_2},
$$
that is, $\omega$ is an $l'$-principal line for the operator $P$.
Indeed, there are no terms $\xi_1^l$ or $\xi_1^{l-1}\xi_2$ in the
symbol $P(x,\xi_1,\xi_2,0,\dots,0)$ and there are no points with
integer coordinates in the strip $l-1\le x+y\le l,\ x,y\ge 0$,
except on the lines $x+y=l-1$ and $x+y=l$, and the line interval
with end-points $(l-1,0)$ and $(l-k,k)$ lies entirely in this
strip (see Fig.~1).

\begin{center}
\begin{picture}(100,100)(-10,-10)
\thicklines {\small\put(0,0){\vector(1,0){100}}
 \put(0,0){\vector(0,1){90}}
 \put(-7,5){$0$}  \put(90,5){$x$} \put(5,82){$y$}
 \put(70,0){\line(-1,1){70}}
 \put(70,-10){$l$} \put(-7,68){$l$}
 \put(70,0){\circle{3}}
 \put(60,10){\circle{3}}
 \put(50,20){\circle{3}}
\put(60,0){\circle*{3}}  \put(40,30){\circle*{3}}
\put(45,30){$(l\!-\!k,k)$}
 \put(20,60){\line(2,-3){45}}
 \put(25,60){$\omega$}
 \thinlines
 \put(60,0){\line(-1,1){60}}
 }
 \put(-60,-5){Fig.~1}
\end{picture}
\end{center}

It follows that the $l'$-principal part of the operator
$P(x,D_1,D_2,0,\dots,0)$ has the form
$$
P^{l'}(x,D_1,D_2,0,\dots,0)=c(x) D_1^{l-1}+ b D_1^{l-k} D_2^k,
$$
where $c(x):=a_{l-1,0,0,\dots,0}(x)$ and $b:=a_{l-k,k,0,\dots,0}$,
and where we have $b\neq 0$.

Since the estimate~\eqref{1b} holds with the operator $D_1^{l-1}$
on the left-hand side, it follows by Proposition~\ref{mal_neodn}
and Remark~\ref{rem_mal_neodn}, (i) that
\begin{equation} \label{1c}
\xi_1^{l-1}=\la(x)\[c(x)\xi_1^{l-1}+b\xi_1^{l-k}\xi_2^k\],
 \qquad x\in\R^n,\quad \xi_1,\xi_2\in\R.
\end{equation}
Relation~\eqref{1c} implies that $\la(x)c(x)\equiv 1$ and
$\la(x)b\equiv 0$, which contradicts the condition $b\neq 0$. This
contradiction proves that the operator $P(x,D)$ is elliptic.
\end{proof}

\begin{remark}
In the space $L^p(\Om)$,\ $p\in[1,\iy]$, each differential
expression $P(x,D)$ of the form~\eqref{8} is naturally associated
with a minimal and a maximal differential operators $P_{\min}$ and
$P_{\max}$. Recall (see~\cite{Ber}, Ch.\,2, \pr\,2) that by
definition $P_{\min}$ is the closure in $L^p(\Om)$ of the
differential operator $P'=P\lceil C_0^\iy(\Om)$  defined
originally on the domain $\dom(P')=C_0^\iy(\Om)$. Clearly,
$\WplOm\subset\dom(P_{\min})$.
\end{remark}

In addition, the coercivity criterion in $\WplOmInf$ for
$p\in(1,\iy)$ (Theorem~\ref{Besov_th}) implies that for an
operator $P(x,D)$ with continuous coefficients in a bounded domain
$\ol\Om$ the relation $\dom(P_{\min})=\WplOmInf$ is equivalent to
the ellipticity of $P(x,D)$.

Thus, Theorem~\ref{de_Leeuw_vary} is equivalent to the following
result.

\begin{cor}
Under the assumptions of Theorem~\ref{de_Leeuw_vary} the inclusion
$$
\dom(P_{\min})\subset\overset{\circ}{W}
\rule{0pt}{2mm}_\iy^{l-1}(\R^n)
$$
is equivalent to the ellipticity of the operator $P(x,D)$.
\end{cor}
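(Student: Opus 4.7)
The plan is to derive both directions of the equivalence from Theorem~\ref{de_Leeuw_vary} combined with a routine closed-graph argument; the analytic content is entirely contained in Theorem~\ref{de_Leeuw_vary}.

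\emph{Ellipticity $\Rightarrow$ inclusion.} First I would invoke Theorem~\ref{de_Leeuw_vary} to obtain the weak coercivity estimate
$$
\sum_{|\al|<l}\|D^\al f\|_{L^\iy(\R^n)} \le C_1\|P(x,D)f\|_{L^\iy(\R^n)}+C_2\|f\|_{L^\iy(\R^n)},\qquad f\in C_0^\iy(\R^n).
$$
Given $u\in\dom(P_{\min})$, choose $\{f_k\}\subset C_0^\iy(\R^n)$ with $f_k\to u$ and $P(x,D)f_k\to P_{\min}u$ in $L^\iy(\R^n)$. Applying the estimate to the differences $f_k-f_m$ shows that $\{D^\al f_k\}$ is Cauchy in $L^\iy(\R^n)$ for every $|\al|\le l-1$. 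By completeness and uniqueness of distributional limits, $f_k\to u$ in the $W_\iy^{l-1}$-norm, so $u$ belongs to $\overset{\circ}{W}\rule{0pt}{2mm}^{l-1}_\iy(\R^n)$ as the limit of a sequence from $C_0^\iy(\R^n)$.

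\emph{Inclusion $\Rightarrow$ ellipticity.} Here I would apply the closed graph theorem to the inclusion
$$
\iota\colon\bigl(\dom(P_{\min}),\ \|\cdot\|_{L^\iy}+\|P_{\min}\cdot\|_{L^\iy}\bigr)\hookrightarrow \overset{\circ}{W}\rule{0pt}{2mm}^{l-1}_\iy(\R^n).
$$
Both spaces are Banach (the domain since $P_{\min}$ is closed by definition), and the graph of $\iota$ is closed because convergence in either norm implies $L^\iy$-convergence and hence the two limits coincide. Therefore $\iota$ is continuous, which yields a constant $C>0$ with $\|u\|_{W_\iy^{l-1}}\le C\bigl(\|u\|_{L^\iy}+\|P_{\min}u\|_{L^\iy}\bigr)$ for all $u\in\dom(P_{\min})$. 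Specializing to $u=f\in C_0^\iy(\R^n)\subset\dom(P_{\min})$ produces exactly the weak coercivity of $P(x,D)$ in $\overset{\circ}{W}\rule{0pt}{2mm}^l_\iy(\R^n)$, and then Theorem~\ref{de_Leeuw_vary} forces $P(x,D)$ to be elliptic.

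The argument is almost entirely formal, and the only subtlety worth attention is confirming that the limit in the first direction really lies in $\overset{\circ}{W}\rule{0pt}{2mm}^{l-1}_\iy(\R^n)$ rather than merely in $W_\iy^{l-1}(\R^n)$; but this is immediate, since the approximating sequence consists of functions in $C_0^\iy(\R^n)$ converging to $u$ in the $W_\iy^{l-1}$-norm.
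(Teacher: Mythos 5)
Your proof is correct and follows exactly the same route as the paper: the paper observes tersely that, since $P_{\min}$ is closed, the inclusion $\dom(P_{\min})\subset\overset{\circ}{W}\rule{0pt}{2mm}_\iy^{l-1}(\R^n)$ is equivalent to the weak-coercivity estimate, and then invokes Theorem~\ref{de_Leeuw_vary}. You have merely unpacked that one-line equivalence into its two directions (completeness argument for one, closed-graph theorem for the other), which is the standard way to see it.
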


\begin{proof}
Since the operator $P_{\min}$ is closed, the inclusion
$\dom(P_{\min})\subset\overset{\circ}{W}\rule{0pt}{2mm}_\iy^{l-1}(\R^n)$
is equivalent to estimate~\eqref{1a}, that is, to the weak
coercivity of $P_{\min}$. It remains to apply
Theorem~\ref{de_Leeuw_vary}.
\end{proof}

\begin{remark}
(i) We emphasize that it is because the selection of a principal
part of a differential operator is not unique that we can use the
anisotropic version of Proposition~\ref{mal_neodn} for the proof
of 'isotropic' Theorem~\ref{de_Leeuw_vary}.

(ii)  In the case when the operator $P$ has constant coefficients
the conditions
$$
P^l(1,0,\dots,0)=\frac{\partial P^l}{\partial
\xi_1}(1,0,\dots,0)=\frac{\partial P^l}{\partial
\xi_2}(1,0,\dots,0)=0
$$
after 'restricting' $P$ to the two-dimensional subspace
$\Span\{\xi_1,\xi_2\}$ mean that $\nabla \td P^l(1,0)=0$ ($\td P$
is the corresponding 'restriction' of the operator $P$). The last
condition immediately contradicts
Proposition~\ref{prop_weak_coerc}, (iv), and the final part of the
proof of Theorem~\ref{de_Leeuw_vary} can be omitted.
\end{remark}

\section{A characterization of weakly coercive \\ operators of two
variables in $\protect\WplRTwo$,\ $p\in[1,\iy]$}

In~\cite{LeeMir}, p. 123 the authors give Malgrange's example of
an operator that is weakly coercive in $\WpTwoRTwoInf$, but not
elliptic: $P(D)=(D_1+i)(D_2+i)$.

The following assertion, in particular, gives a complete
characterization of weakly coercive operators in the isotropic
Sobolev space $\WplRTwoInf$.

\begin{theorem}\label{obschij_vid}
\emph{(i)} An arbitrary weakly coercive operator $P(D)$ of order
$l\ge 2$ in the isotropic space $\WplRTwoInf$ has the form
\begin{equation}\label{dve_perem}
P(D)=R(D)\prod_{k=1}^m \(\la_k D_1+\mu_k D_2+\al_k\)+Q(D),
\end{equation}
where $R(D)$ is an elliptic operator of order $l-m$,\ $Q(D)$ is an
operator of order $\le l-2$,\ $\al_k\in\C\setminus\R$,\
$(\la_k,\mu_k)\in\R^2$ are pairwise non-collinear vectors, where
$k\in\{1,\dots,m\}$, and $m\le l$.

\emph{(ii)} Conversely, any operator of the form~\eqref{dve_perem}
is weakly coercive in $\WplRTwo$,\ $p\in[1,\iy]$.
\end{theorem}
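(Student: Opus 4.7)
The plan is to treat the two implications separately, using the structural results established earlier in the paper.

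For necessity (i), I would begin by invoking Proposition~\ref{prop_weak_coerc}, (iv): in two variables every nontrivial zero of the principal symbol $P^l(\xi_1,\xi_2)$ is simple. Since $P^l$ is a homogeneous polynomial in two variables, it factors over $\C$, and its real zeros (being simple) contribute $m$ pairwise non-proportional real linear forms $L_k(\xi)=\lambda_k\xi_1+\mu_k\xi_2$, while the remaining factors combine into a homogeneous elliptic polynomial $R^{l-m}(\xi)$ of degree $l-m$, giving $P^l = R^{l-m}\prod_{k=1}^m L_k$. Next I would recover the lower-order structure. Choosing unit vectors $\nu_k$ with $L_k(\nu_k)=0$ and substituting $\xi=t\nu_k$ into the algebraic inequality~\eqref{alg_ineq} with test monomial $D_1^{l-1}$ (and likewise for the other degree $l-1$ monomials), one obtains as $t\to\iy$ the conclusion that $P^{l-1}(\nu_k)\neq 0$ for every characteristic direction $\nu_k$. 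I would then look for a factorization of the form $P(D) = R(D)\prod_{k=1}^m(L_k(D)+\alpha_k) + Q(D)$ with $R(D)$ of order $l-m$ having principal part $R^{l-m}$, complex constants $\alpha_k$, and $Q$ of order $\le l-2$. Matching the full degree $l-1$ symbol shows that the system of equations for the lower-order coefficients of $R$ and for $\alpha_1,\dots,\alpha_m$ is nondegenerate (the matrix is essentially triangular after ordering by the characteristic directions), and the condition $P^{l-1}(\nu_k)\neq 0$ forces $\alpha_k\neq 0$.

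The key point is then to force each $\alpha_k$ to have nonzero imaginary part. For this I would use Proposition~\ref{suzh} to restrict the $L^\iy$ estimate defining weak coercivity to the characteristic line $\{\xi:L_k(\xi)=0\}$; the restriction of the operator reduces, up to lower order, to a first-order factor $L_k(D)+\alpha_k$ composed with a non-vanishing elliptic symbol. If $\alpha_k$ were real, the restricted operator would possess a real zero of the full symbol, and a Malgrange-type contradiction analogous to the proof of Theorem~\ref{de_Leeuw_vary}, together with the anisotropic version of Proposition~\ref{mal_neodn} (Remark~\ref{rem_mal_neodn}, (i)), would rule this out.

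For sufficiency (ii), Proposition~\ref{prop_weak_coerc}, (ii) allows us to drop the summand $Q(D)$, so it is enough to prove that $T(D):=R(D)\prod_{k=1}^m(L_k(D)+\alpha_k)$ is weakly coercive in $\WplRTwo$ for every $p\in[1,\iy]$. I would show that for each multi-index $\alpha$ with $|\alpha|\le l-1$ the function $\Phi_\alpha(\xi):=\chi(\xi)\,\xi^\alpha/T(\xi)$ belongs to $\cm_p(\R^2)$, where $\chi$ is a smooth cutoff vanishing on a neighborhood of the (bounded) real zero set of $T$. The principal observation is that each factor $1/(L_k(\xi)+\alpha_k)$ with $\alpha_k\in\C\setminus\R$ is the Fourier--Stieltjes transform of the tensor measure $h_k(x_\parallel)\,\delta(x_\perp)$, where $h_k\in L^1(\R)$ is the exponentially decaying fundamental solution of $L_k(D)+\alpha_k$ in the $L_k$-direction, so $1/(L_k+\alpha_k)\in\cm_1=\cm_\iy$ by~\eqref{2.7A}, and consequently $\prod_k 1/(L_k+\alpha_k)\in\cm_1$. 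After a partial-fraction decomposition separating the elliptic factor $\chi\,\xi^\beta/R$ from the product $\prod 1/(L_k+\alpha_k)$, the elliptic pieces are handled by Mikhlin--Lizorkin for $p\in(1,\iy)$ and by Theorem~\ref{th1} of~\cite{BDLM} for $p=1$ (hence $p=\iy$). Assembling $\Phi_\alpha$ as a finite sum of products of $\cm_1$-multipliers gives $\Phi_\alpha\in\cm_p$ for every $p\in[1,\iy]$, and the weak coercive estimate follows at once. The main obstacle I foresee lies in (i), namely the bookkeeping required to pin down the lower-order structure and in particular to verify that every $\alpha_k$ lies off the real axis; this exploits the non-uniqueness of the choice of principal part through the anisotropic form of Proposition~\ref{mal_neodn}. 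The sufficiency direction (ii) is, by contrast, largely a matter of assembling standard multiplier-theoretic ingredients once the tensor-product structure of $1/(L_k+\alpha_k)$ is recognized.
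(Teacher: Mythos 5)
Your overall structure parallels the paper's, but the key step in part (i) — showing that each $\al_k\in\C\setminus\R$ — is where your argument diverges and, as sketched, has a gap. The paper obtains the decomposition $P=\tilde P+Q$ (with $\tilde P=\prod_{j}[(a_j\xi_1+b_j\xi_2)^{k_j}+Q_j]$, $\deg Q_j<k_j$, $\deg Q\le l-2$) purely algebraically by decomposing the rational function $P^{l-1}/P^l$ into partial fractions, which is cleaner than your coefficient-matching argument but essentially equivalent. The crucial conclusion $\al_k\not\in\R$ is then drawn in one stroke: by Proposition~\ref{prop_weak_coerc}, (ii) the perturbation $Q$ of order $\le l-2$ can be dropped, so $\tilde P$ is weakly coercive; by Proposition~\ref{prop_weak_coerc}, (i) the zero set of $\tilde P$ is compact; but if some $\al_k$ were real, the whole affine line $\{\la_k\xi_1+\mu_k\xi_2+\al_k=0\}$ would lie in that zero set — contradiction. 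No anisotropic-principal-part argument is needed.

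Your proposed route to $\al_k\not\in\R$ via Proposition~\ref{suzh} does not go through as described. Restricting the estimate to the characteristic direction $\nu_k$ (the one-dimensional subspace where $L_k$ vanishes) annihilates the factor $L_k$; the restricted symbol $P(\xi\lceil\nu_k)$ has degree $\le l-1$ and its $L_k$-factor becomes the constant $\al_k$, not a first-order operator $L_k(D)+\al_k$. So the claim that the restricted operator "reduces, up to lower order, to a first-order factor $L_k(D)+\al_k$ composed with a non-vanishing elliptic symbol" is incorrect, and the ensuing Malgrange-type contradiction is not available. (Your application of the algebraic inequality~\eqref{alg_ineq} to show $P^{l-1}(\nu_k)\neq 0$ is sound — that gives $\al_k\neq 0$ — but it says nothing about $\Im\al_k$, which is exactly the delicate point.) You should replace this part by the compactness argument above, or alternatively establish Theorem~\ref{th_16}'s formula~\eqref{Pl-1theta_k} directly and invoke Proposition~\ref{prop_weak_coerc}, (i) — but note that the paper deduces Theorem~\ref{th_16} from Theorem~\ref{obschij_vid}, so using it here would be circular.

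For part (ii) your approach agrees in substance with the paper's: $Q$ is discarded via Proposition~\ref{prop_weak_coerc}, (ii), the affine factors $1/(\la_k\xi_1+\mu_k\xi_2+\al_k)$ are recognized as Fourier--Stieltjes transforms of one-dimensional tensor measures (the paper's formula~\eqref{xi1+i} and the change of variables that follows), hence lie in $\cm_1$, and the elliptic piece is handled by Theorem~\ref{th1}. One caveat: your description of a clean "partial-fraction separation" of $\chi\,\xi^\ga/(R\prod(L_k+\al_k))$ into an elliptic piece $\chi\,\xi^\be/R$ times $\prod 1/(L_k+\al_k)$ glosses over the case $|\ga|$ exceeding $\deg R-1$; since $1/(L_k+\al_k)$ does not decay along its own characteristic direction, the excess powers of $\xi$ cannot be distributed naively. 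The paper handles this via a genuine induction on $m$, normalizing $\mu_m=0$, splitting cases on $\ga_1>0$ versus $\ga_1=0$, and performing a correction~\eqref{3''} for the boundary case $\ga_1=0$, $\ga_2=l'+m-1$. If you spell that bookkeeping out your argument becomes the paper's.
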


\begin{proof}
(i) We assume first that $P(\xi)$ is an arbitrary polynomial of
order $l$ and $P^{l-1}(\xi):=\sum_{|\al|=l-1} a_\al \xi^\al$ is
the $(l-1)$-homogeneous part of the polynomial $P(\xi)$. The
principal form $P^l(\xi)$ can be represented as follows:
$$
P^l(\xi_1,\xi_2)=\prod_{j=1}^s (a_j\xi_1+b_j\xi_2)^{k_j},
$$
where $k_j\ge 1$,\; $\sum_{j=1}^s k_j=l$, and $(a_j,b_j)\in\C^2$
are pairwise non-collinear, where $j\in\{1,\dots,s\}$. We claim
that $P(\xi)$ can be expressed as
\begin{equation}\label{predstavl_ot_2_perem}
P(\xi_1,\xi_2)=\prod_{j=1}^s
\[(a_j\xi_1+b_j\xi_2)^{k_j}+Q_j(\xi_1,\xi_2)\]+Q(\xi_1,\xi_2),
\end{equation}
where $\deg Q_j< k_j$,\; $j\in\{1,\dots,s\}$,\; $\deg Q\le l-2$.
In fact, the rational fraction $P^{l-1}(\xi)/P^l(\xi)$, which is
the ratio of two homogeneous polynomials of two variables, can be
decomposed to a sum of partial fractions:
$$
 \frac{P^{l-1}(\xi_1,\xi_2)}{P^l(\xi_1,\xi_2)}=
 \sum_{j=1}^s \frac{Q_j(\xi_1,\xi_2)}{(a_j\xi_1+b_j\xi_2)^{k_j}},
 \qquad\text{where}\quad \deg Q_j<k_j.
$$
Clearly, this implies that the homogeneous forms of orders $l$ and
$l-1$ of the polynomial
 $$
\tilde P(\xi_1,\xi_2):=P^l(\xi_1,\xi_2) \prod_{j=1}^s
\[1+\frac{Q_j(\xi_1,\xi_2)}{(a_j\xi_1+b_j\xi_2)^{k_j}}\]=
\prod_{j=1}^s
\[(a_j\xi_1+b_j\xi_2)^{k_j}+Q_j(\xi_1,\xi_2)\]
$$
coincide with $P^l(\xi)$ and $P^{l-1}(\xi)$, respectively.
Therefore, the difference
$$
Q(\xi):=P(\xi)-\tilde P(\xi)
$$
is a polynomial of degree $\le l-2$, which proves that the
representation~\eqref{predstavl_ot_2_perem} holds.

Now let $P(D)$ be a weakly coercive operator in $\WplRTwoInf$. By
Proposition~\ref{prop_weak_coerc}, (iv) the polynomial $P^l(\xi)$
has no multiple real zeros, and hence
$$
P^l(\xi_1,\xi_2)=\prod_{j=1}^{s-m} (a_j\xi_1+b_j\xi_2)^{k_j}
\prod_{j=1}^m \(\la_j \xi_1+\mu_j \xi_2\), \qquad k_j\ge 1,\quad
\sum_{j=1}^{s-m} k_j=l-m.
$$
Here the vectors $(\la_j,\mu_j)\in\R^2$,\ $j\in\{1,\dots,m\}$, and
$(a_j,b_j)\in\C^2$,\ $j\in\{1,\dots,s-m\}$, are pairwise
non-collinear. Now we write the
decomposition~\eqref{predstavl_ot_2_perem} for the polynomial
$P(\xi)$, with $a_{s-m+j}=\la_j$,\ $b_{s-m+j}=\mu_j$ and
$Q_{s-m+j}(\xi)\equiv \al_j$,\ $j\in\{1,\dots,m\}$, since
$k_{s-m+1}=\dots=k_s=1$. To complete the proof it suffices to note
that $\al_j\not\in\R$,\; $j\in\{1,\dots,m\}$, by
Proposition~\ref{prop_weak_coerc}, (i) and to set
$$
R(D):= \prod_{j=1}^{s-m} \[(a_j D_1+b_j D_2)^{k_j}+Q_j(D)\].
$$
The operator $R(D)$ has order $\sum_{j=1}^{s-m} k_j=l-m$ and is
elliptic because its principal part $R^{l-m}(\xi)=
\prod_{j=1}^{s-m} (a_j\xi_1+b_j\xi_2)^{k_j}\neq 0$ for
$\xi\in\R^2\setminus\{0\}$. Taking~\eqref{predstavl_ot_2_perem}
into account we see that $P(D)$ has the form~\eqref{dve_perem}.

(ii) Now we prove that the operator~\eqref{dve_perem} is weakly
coercive in $\WplRTwo$ for any $p\in[1,\iy]$. By
Proposition~\ref{prop_weak_coerc}, (ii) we may assume that
$Q(D)=0$. First, using induction on $m$,\; $m\in\{0,\dots,l\}$,
we prove that
\begin{equation}\label{mult_phi}
\Phi_\ga^{(m)}(\xi)=\Phi_{\ga_1,\ga_2}^{(m)}(\xi):=\chi(\xi)
\frac{\xi_1^{\ga_1}\xi_2^{\ga_2}}{R(\xi)\prod_{k=1}^m \(\la_k
\xi_1+\mu_k \xi_2+\al_k\)}\in\cm_1.
\end{equation}
Here $|\ga|=\ga_1+\ga_2<l'+m$,\ $R(\xi)$ is an elliptic polynomial
of degree $l'$, and $\chi(\xi)$ is the corresponding 'cutoff'
function.

After an orthogonal change of the variables $\xi_1,\dots,\xi_n$ we
may assume that $\mu_m=0$. Since this change preserves the
non-collinearity of the vectors $\(\la_k,\mu_k\)$,\;
$k\in\{1,\dots,m\}$, we conclude that $\mu_k\neq 0$ for all
$k\in\{1,\dots,m-1\}$.

For $m=0$ the assertion in question is obvious:
$$
 \Phi_\ga^{(0)}(\xi)=\chi(\xi)\xi^\ga(R(\xi))^{-1}\in\cm_1
 \quad\text{for}\quad |\ga|<l',
$$
because the polynomial $R(\xi)$ is elliptic (see~\cite{BDLM},
\pr\,4).

Further, we have $(\la_k\xi_1+\mu_k\xi_2+\al_k)^{-1}\in\cm_1$,\;
$k\in\{1,\dots,m\}$. Indeed, let $\chi_+(\cdot)$ be the Heaviside
function and let $\de(\cdot)$ be the Dirac measure on the line.
The Fourier-Stieltjes transform $\hat\sigma$ of a finite measure
$\sigma$ with density $-i\chi_+(t_1)e^{-t_1}\otimes\de(t_2)$ is
\begin{equation}\label{xi1+i}
\hat\sigma=-i\int_{\R^2}
\chi_+(t_1)\de(t_2)e^{-t_1}e^{it_1\xi_1}e^{it_2\xi_2}\, dt_1
dt_2=-i\int_0^{+\iy} e^{it_1(\xi_1+i)}\, dt_1=(\xi_1+i)^{-1}.
\end{equation}
Making the change of the variables
$$
\eta_1:=\la_k\xi_1+\mu_k\xi_2+\al_k,\qquad
\eta_2:=-\mu_k\xi_1+\la_k\xi_2
$$
in~\eqref{xi1+i}, we obtain
$(\la_k\xi_1+\mu_k\xi_2+\al_k)^{-1}=\hat\sigma_1$, where the
finite measure $\sigma_1$ has density
$$
 \frac{\Im\ \al_k}{\la_k^2+\mu_k^2}\chi_+
 \(\Im\ \al_k(t_1\la_k+t_2\mu_k)\)
 \exp\[(i\Re\ \al_k-\Im\
 \al_k)\frac{t_1\la_k+t_2\mu_k}{\la_k^2+\mu_k^2}\]
 \otimes\de(-t_1\mu_k+t_2\la_k)
$$
at the point $(t_1,t_2)\in\R^2$. This yields the inclusion
$(\la_k\xi_1+\mu_k\xi_2+\al_k)^{-1}\in\cm_1$ (see~\cite{St},
Ch.\,IV, \pr\,3).

Suppose that~\eqref{mult_phi} holds for all the functions
$\Phi_\ga^{(t)}(\xi)$ with $t\le m-1$. If $\ga_1>0$, then
$\Phi_\ga^{(m)}\in\cm_1$ as it is the product of
$\Phi_{\ga_1-1,\ga_2}^{(m-1)}$ and $\xi_1(\la_m\xi_1+\al_m)^{-1}$.
If $\ga_1=0$ and $\ga_2<l'+m-1$, then $\Phi_\ga^{(m)}\in\cm_1$ as
it is the product of $\Phi_{0,\ga_2}^{(0)}$ and $\prod_{k=1}^m
(\la_k \xi_1+\mu_k \xi_2+\al_k)^{-1}$.

Now let $\ga_1=0$,\ $\ga_2=l'+m-1$. Let
$R(\xi)=a_0\xi_2^{l'}+\dots$,\ $a_0\neq 0$, where the dots stand
for a polynomial of degree less than $l'$ with respect to $\xi_2$.
Consider the difference between the function
$\Phi_{0,l'+m-1}^{(m)}(\xi)$ and the multiplier $\chi(\xi)
\[a_0\mu_1\dots\mu_{m-1}(\la_m\xi_1+\al_m)\]^{-1}$:
\begin{equation} \label{3''}
 \chi(\xi)\frac{a_0\mu_1\dots\mu_{m-1}\xi_2^{l'+m-1}-\(a_0\xi_2^{l'}+\dots\)
 \prod_{k=1}^{m-1} \(\la_k \xi_1+\mu_k\xi_2+\al_k\)}
{a_0\mu_1\dots\mu_{m-1}R(\xi)\prod_{k=1}^m \(\la_k \xi_1+\mu_k
   \xi_2+\al_k\)}\ .
\end{equation}
The polynomial in the numerator of~\eqref{3''} has degree less
than $l'+m-1$ with respect to $\xi_2$. Therefore, this fraction is
a linear combination of functions $\Phi_\ga^{(m)}(\xi)$ for which
$\ga_2<l'+m-1$, so that it is a sum of multipliers by the above.

Now the weak coercivity of the operator $P(D)$ in $\WplRTwo$
follows from the identities
$$
  \xi^\ga \hat f(\xi)= \Phi^{(m)}_\ga(\xi) P(\xi)\hat f(\xi)+
  \xi^\ga (1-\chi(\xi)) \hat f(\xi),\qquad f\in
  C_0^\iy(\R^2),\qquad |\ga|<l,
$$
to which we apply the inverse Fourier transform while
taking~\eqref{mult_phi} into account.
\end{proof}

\begin{cor}
The space $L^0_{\iy,\R^2}(P)$, where $P(D)$ is the operator
in~\eqref{dve_perem}, consists of operators of the following form:
\begin{equation}\label{L0P}
 T(D):=cR^{l-m}(D)\prod_{k=1}^m \(\la_k D_1+\mu_k D_2\)+Q(D),\qquad
c\in\C,
\end{equation}
where $Q(D)$ is an arbitrary operator of order $\le l-1$ and
$R^{l-m}(D)$ is the principal part of the operator $R(D)$ (of
order $l-m$).
\end{cor}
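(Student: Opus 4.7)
The plan is to establish the two inclusions in the identity $L^0_{\iy,\R^2}(P) = \{cP^l(D) + Q(D) : c \in \C,\ \deg Q \le l-1\}$, noting that the principal part $P^l(D) = R^{l-m}(D)\prod_{k=1}^m(\la_k D_1 + \mu_k D_2)$ by the construction of $P(D)$ in~\eqref{dve_perem}, so that the parametrisation in~\eqref{L0P} is indeed of the form $cP^l(D) + Q(D)$.

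For the inclusion $\supseteq$, I would invoke Theorem~\ref{obschij_vid}, (ii), which asserts that $P(D)$ is weakly coercive in $\WplRTwoInf$. Consequently every differential operator $Q(D)$ of order $\le l-1$ is subordinated to $P(D)$ and therefore belongs to $L^0_{\iy,\R^2}(P)$. Since $P^l(D) = P(D) - \wt{Q}(D)$ for some $\wt Q$ with $\deg \wt Q \le l-1$, and $P(D)$ is trivially a member of $L^0_{\iy,\R^2}(P)$, linearity of the space $L^0_{\iy,\R^2}(P)$ yields $cP^l(D) + Q(D) \in L^0_{\iy,\R^2}(P)$ for every $c\in\C$ and every $Q$ with $\deg Q\le l-1$.

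For the reverse inclusion, I would start from an arbitrary $T(D) \in L^0_{\iy,\R^2}(P)$ and first apply Proposition~\ref{prop_alg_ineq} to obtain the algebraic bound $|T(\xi)| \le C_1|P(\xi)| + C_2$, which immediately forces $\deg T \le l$. Next I would appeal to the constant-coefficient part of Proposition~\ref{mal_neodn}, which, since both $T$ and $P$ have constant coefficients, provides a constant $c \in \C$ with $T^l(\xi) = c P^l(\xi)$. The difference $T(D) - cP^l(D)$ is then an operator of order $\le l-1$, and taking it as the $Q(D)$ in~\eqref{L0P} completes the representation.

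I do not expect any step here to be a serious obstacle: the corollary is essentially a direct consequence of Proposition~\ref{mal_neodn} (to pin down the top-order part up to a scalar) combined with the weak coercivity already established in Theorem~\ref{obschij_vid} (to absorb all lower-order operators into $L^0_{\iy,\R^2}(P)$). The only subtlety worth verifying explicitly is that the hypothesis $a_{j\al}\in C^1$ for the top-order coefficients required by Proposition~\ref{mal_neodn} is trivially satisfied in the constant-coefficient setting at hand.
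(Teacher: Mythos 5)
Your proposal is correct and reaches the same conclusion as the paper, with the forward inclusion (\(\subseteq\)) handled identically: Proposition~\ref{mal_neodn} forces \(T^l = cP^l\), so \(T - cP^l\) has order \(\le l-1\) (your preliminary degree bound via Proposition~\ref{prop_alg_ineq} is in fact already built into the definition of \(L^0_{\iy,\R^2}(P)\), which only admits operators of the form~\eqref{Q_Pj}, but it does no harm).

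For the reverse inclusion you take a slightly different and cleaner route. You exploit that \(L^0_{\iy,\R^2}(P)\) is a linear space, that \(P\) itself lies in it trivially, and that Theorem~\ref{obschij_vid}, (ii) puts every operator of order \(\le l-1\) into it; then \(P^l = P - (P - P^l)\) lands in \(L^0_{\iy,\R^2}(P)\) at once, and linearity finishes the job. The paper instead argues via the multiplier characterization from Proposition~\ref{Q=MP+N}: it splits into the cases \(c = 0\) and \(c \neq 0\), and for \(c \neq 0\) writes \(\chi\, T/P = \chi\bigl[c + (T-cP)/P\bigr]\) and checks this belongs to \(\cm_1\) because \(\deg(T-cP) \le l-1\). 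Both arguments rest on the same ingredient (weak coercivity of \(P\) from Theorem~\ref{obschij_vid}, (ii)), but your linear-algebraic shortcut avoids the case split and the multiplier bookkeeping. Your closing remark about \(C^1\) regularity of the leading coefficients is correctly noted to be vacuous here, since all coefficients are constant.
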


\begin{proof}
By Proposition~\ref{mal_neodn}, if $T\in L^0_{\iy,\R^2}(P)$, then
$$
 T^l(D)=cP^l(D)=cR^{l-m}(D)\prod_{k=1}^m \(\la_k D_1+\mu_k D_2\);
$$
hence the operator $T(D)$ has the form~\eqref{L0P}.

Conversely, let $T(D)$ be the operator of the form~\eqref{L0P}. By
Theorem~\ref{obschij_vid}, (ii), $T(D)$ is weakly coercive in
$\WplRTwoInf$, and for $c=0$ we have $Q=T\in L^0_{\iy,\R^2}(P)$.
If $c\neq 0$, then the corresponding 'cutoff' function satisfies
$$
\chi(\xi)\frac{T(\xi)}{P(\xi)}=
\chi(\xi)\frac{cP(\xi)+(T(\xi)-cP(\xi))}{P(\xi)}=
\chi(\xi)\[c+\frac{T(\xi)-cP(\xi)}{P(\xi)}\]\in\cm_1,
$$
because $T^l=cP^l$. Hence $\deg(T-cP)\le l-1$, so $T\in
L^0_{\iy,\R^2}(P)$.
\end{proof}

\begin{cor} \label{cor_10}
The product of an elliptic operator of order $l$ of two variables
and a weakly coercive operator of order $m$ in
$\overset{\circ}{W}\rule{0pt}{2mm}_\iy^m(\R^2)$ is weakly coercive
in $\overset{\circ}{W}\rule{0pt}{2mm}_\iy^{l+m}(\R^2)$.
\end{cor}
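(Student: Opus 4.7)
My plan is to combine the two halves of Theorem~\ref{obschij_vid}: use part (i) to put the weakly coercive factor $P(D)$ in the canonical form, multiply by $R(D)$, and then recognize the product as an operator to which part (ii) applies.

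First, I would apply Theorem~\ref{obschij_vid}(i) to the weakly coercive operator $P(D)$ of order $m$, obtaining a representation
\begin{equation*}
P(D) = \widetilde R(D)\prod_{k=1}^{m'}\bigl(\la_k D_1+\mu_k D_2+\al_k\bigr)+Q(D),
\end{equation*}
where $\widetilde R(D)$ is an elliptic operator of order $m-m'$, the vectors $(\la_k,\mu_k)\in\R^2$ are pairwise non-collinear, $\al_k\in\C\setminus\R$, $m'\le m$, and $\deg Q\le m-2$.

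Next I would multiply by $R(D)$ to get
\begin{equation*}
R(D)P(D) = \bigl[R(D)\widetilde R(D)\bigr]\prod_{k=1}^{m'}\bigl(\la_k D_1+\mu_k D_2+\al_k\bigr) + R(D)Q(D).
\end{equation*}
The key observation is that the product $R(D)\widetilde R(D)$ of two elliptic operators in two variables is again elliptic, of order $l+(m-m') = (l+m)-m'$: its principal symbol is $R^l(\xi)\widetilde R^{m-m'}(\xi)$, which is nonvanishing on $\R^2\setminus\{0\}$ as a product of nonvanishing factors. Moreover $\deg(R(D)Q(D))\le l+m-2$.

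Thus the above expression for $R(D)P(D)$ is exactly of the form~\eqref{dve_perem} required by Theorem~\ref{obschij_vid}(ii), with the same non-collinear vectors $(\la_k,\mu_k)$, the same non-real $\al_k$, an elliptic factor of order $(l+m)-m'$, $m'\le l+m$, and a remainder of order $\le l+m-2$. Applying part (ii) of that theorem immediately yields weak coercivity of $R(D)P(D)$ in $\overset{\circ}{W}\rule{0pt}{2mm}_p^{l+m}(\R^2)$ for every $p\in[1,\iy]$, in particular for $p=\iy$, which is the desired conclusion. There is no real obstacle here: the argument is a direct bookkeeping of the canonical form once one notices that multiplication by an elliptic factor in two variables simply enlarges the elliptic piece while preserving all the structural data (non-collinear real directions, non-real shifts $\al_k$, and the bound on the perturbation) on which Theorem~\ref{obschij_vid}(ii) relies.
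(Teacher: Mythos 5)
Your proof is correct and follows essentially the same route as the paper: decompose the weakly coercive factor via Theorem~\ref{obschij_vid}(i), multiply by the elliptic operator, observe that the product of two elliptic operators in two variables is elliptic with principal symbols multiplying, check the degree bound on the remainder, and apply Theorem~\ref{obschij_vid}(ii).
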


\begin{proof}
Let $T_1(D)$ be an elliptic operator of order $l$ and $T_2(D)$ a
weakly coercive operator of order $m$. By
Theorem~\ref{obschij_vid}, (i),
\begin{equation}\label{T1T2}
T_2(D)=R(D)\prod_{k=1}^s \(\la_k D_1+\mu_k D_2+\al_k\)+Q(D),
\end{equation}
where $R(D)$ is an elliptic operator of order $m-s$,\ $\deg Q\le
m-2$,\ $\al_k\in\C\setminus\R$, the $\(\la_k,\mu_k\)$ are pairwise
non-collinear vectors in $\R^2$,\ $k\in\{1,\dots,s\}$,\ $s\le m$.
Multiplying the~\eqref{T1T2} by $T_1(D)$ we obtain the
representation
\begin{equation*}
T_1(D)T_2(D)=T_1(D)R(D)\prod_{k=1}^s \(\la_k D_1+\mu_k
D_2+\al_k\)+T_1(D)Q(D),
\end{equation*}
which also has the form~\eqref{dve_perem}. In fact, $T_1(D)R(D)$
is an elliptic operator of order $l+m-s$ and $\deg(T_1Q)=\deg
T_1+\deg Q\le l+m-2$.
\end{proof}

The following theorem provides an algebraic criterion for weak
coercivity in $\WplRTwoInf$.

\begin{theorem} \label{th_16}
Let $P(D)$ with $D=(D_1,D_2)$ be an operator of order $l$, and
assume that all the coefficients and the zeros of $P^l(\xi)$ are
real.

\emph{(i)} If $P(D)$ is weakly coercive in the isotropic space
$\WplRTwoInf$, then the polynomials $P^l(\xi)$ and $\Im\
P^{l-1}(\xi)$ have no common non-trivial real zeros.

\emph{(ii)} Conversely, if polynomials $P^l(\xi)$ and $\Im\
P^{l-1}(\xi)$ have no common non-trivial real zeros, then the
operator $P(D)$ is weakly coercive in $\WplRTwo$,\ $p\in[1,\iy]$.
\end{theorem}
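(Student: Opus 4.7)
The plan is to deduce both parts from Theorem~\ref{obschij_vid} by reading off the coefficients $\al_k$ in the decomposition~\eqref{dve_perem} from the values of $P^{l-1}$ at the zeros of $P^l$.

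For part (i), I apply Theorem~\ref{obschij_vid}(i) to the weakly coercive $P$ to write
\[
P(D)=R(D)\prod_{k=1}^m(\la_kD_1+\mu_kD_2+\al_k)+Q(D),
\]
with $R$ elliptic of order $l-m$, pairwise non-collinear $(\la_k,\mu_k)$, $\al_k\in\C\setminus\R$, and $\deg Q\le l-2$. Writing $\ell_k:=\la_k\xi_1+\mu_k\xi_2$ this gives $P^l=R^{l-m}\prod_{k=1}^m\ell_k$. Since $R^{l-m}$ is elliptic, all of its linear factors over $\C$ are non-real, so the hypothesis that every zero of $P^l$ is real forces $m=l$ and $R$ to equal a real constant $c\ne0$. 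Thus $P^l=c\prod_{k=1}^l\ell_k$ and
\[
P^{l-1}(\xi)=c\sum_{k=1}^l\al_k\prod_{j\ne k}\ell_j(\xi).
\]
At any nontrivial real zero $\xi^0$ of $P^l$ exactly one factor $\ell_{k_0}$ vanishes, so
\[
\Im P^{l-1}(\xi^0)=c\,(\Im\al_{k_0})\prod_{j\ne k_0}\ell_j(\xi^0),
\]
which is nonzero because $c$ and the remaining product are real nonzero and $\al_{k_0}\notin\R$.

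For part (ii) I would reverse the construction. Factor $P^l=c\prod_{k=1}^l\ell_k$ with pairwise non-collinear real linear forms and $c\in\R\setminus\{0\}$; for each $k$ choose a nonzero $\xi^k$ with $\ell_k(\xi^k)=0$ and set
\[
\al_k:=\frac{P^{l-1}(\xi^k)}{c\prod_{j\ne k}\ell_j(\xi^k)}.
\]
The denominator is a nonzero real, so the hypothesis $\Im P^{l-1}(\xi^k)\ne0$ yields $\al_k\in\C\setminus\R$. Then $\td P(D):=c\prod_{k=1}^l(\ell_k(D)+\al_k)$ has the same $l$-principal part as $P$, and its $(l-1)$-part agrees with $P^{l-1}$ at every $\xi^k$. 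Since a homogeneous polynomial of degree $l-1$ in two variables is determined by its values on the $l$ distinct lines $\{\ell_k=0\}$ (the polynomials $\prod_{j\ne k}\ell_j$ form a basis dual to these evaluations), it follows that $\td P^{l-1}=P^{l-1}$. Hence $Q:=P-\td P$ has degree at most $l-2$, which puts $P$ in the form~\eqref{dve_perem} with $R\equiv c$ and $m=l$, and Theorem~\ref{obschij_vid}(ii) then delivers weak coercivity in $\WplRTwo$ for every $p\in[1,\iy]$.

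The main obstacle is securing in (ii) that the real linear factors of $P^l$ are genuinely simple; only then does the partial-fraction inversion producing the constants $\al_k$ make sense. This is the point where the hypothesis ``the zeros of $P^l$ are real'' must be read in the strong sense that $P^l$ splits into $l$ distinct real linear forms, paralleling the conclusion of Proposition~\ref{prop_weak_coerc}(iv) in the weakly coercive case. A repeated real factor would correspond to a higher-order pole in the partial-fraction expansion of $P^{l-1}/P^l$ and would break the direct recovery of the $\al_k$; ruling it out requires an auxiliary anisotropic argument of the type used in the proof of Proposition~\ref{prop_weak_coerc}(iv), together with Proposition~\ref{mal_neodn}.
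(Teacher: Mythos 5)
Your proof follows essentially the same route as the paper's: both parts are reduced to Theorem~\ref{obschij_vid} through the observation that evaluating $P^{l-1}$ at a zero $(-\mu_r,\la_r)$ of a simple real linear factor of $P^{l}$ isolates the corresponding constant $\al_r$, as in~\eqref{Pl-1theta_k}. Part (i) is the same argument phrased slightly more explicitly (making clear that ellipticity of $R$ and reality of all zeros of $P^l$ force $m=l$ and $R\equiv c\in\R\setminus\{0\}$, a step the paper compresses). Part (ii) replaces the paper's appeal to the partial-fraction construction inside the proof of Theorem~\ref{obschij_vid}(i) with an equivalent duality/interpolation argument for homogeneous polynomials of degree $l-1$ on $l$ distinct lines; the content is the same.

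The issue you flag at the end is real, but your suggested remedy for it is not available in part (ii). You cannot invoke an argument ``of the type used in the proof of Proposition~\ref{prop_weak_coerc}(iv)'' to rule out a repeated real factor, because that proposition presupposes weak coercivity, which is precisely what (ii) is asked to establish. In fact, if the hypothesis ``the zeros of $P^l$ are real'' is read only to mean that $P^l$ splits over $\R$ and not that its linear factors are pairwise non-collinear, then (ii) is false as stated: take $P(\xi)=\xi_1^2+i\xi_2$, so $P^2=\xi_1^2$ has all real zeros, $\Im P^{1}=\xi_2$, and the two have no common nontrivial real zero, yet $\nabla P^2(0,1)=0$ so $P(D)$ is not weakly coercive by Proposition~\ref{prop_weak_coerc}(iv). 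The paper's proof of (ii) silently uses $\la_r\mu_k-\la_k\mu_r\neq 0$ exactly where you worry it should, so the same implicit reading of the hypothesis (``$P^l$ is a product of $l$ pairwise non-collinear real linear forms'') is required there as well. In part (i) there is no difficulty, since there Proposition~\ref{prop_weak_coerc}(iv) legitimately supplies simplicity from the assumed weak coercivity.
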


\begin{proof} By assumption the principal part $P^l(\xi)$ has the form
\begin{equation} \label{P^l}
P^l(\xi)=\prod_{k=1}^l
\(\la_k\xi_1+\mu_k\xi_2\),\qquad\text{where}\qquad\(\la_k,\mu_k\)\in\R^2,\quad
k\in\{1,\dots,l\}.
\end{equation}

(i) Suppose that the operator $P(D)$ is weakly coercive in
$\WplRTwoInf$. Then the vectors $\(\la_k,\mu_k\)$ in~\eqref{P^l}
are pairwise non-collinear by Proposition~\ref{prop_weak_coerc},
(iv). Combining Theorem~\ref{obschij_vid}, (i) and
relation~\eqref{P^l} shows that $P(\xi)$ has the form
\begin{equation} \label{P}
 P(\xi)=\prod_{k=1}^l \(\la_k \xi_1+\mu_k
\xi_2+\al_k\)+Q(\xi),
\end{equation}
where $\al_k\in\C\setminus\R$,\ $k\in\{1,\dots,l\}$, and $\deg
Q\le l-2$. It follows from~\eqref{P} that
\begin{equation} \label{Pl}
 P^{l-1}(\xi)=\sum_{j=1}^l \al_j\prod_{k\neq
  j}\(\la_k\xi_1+\mu_k\xi_2\).
\end{equation}
Now substituting in~\eqref{Pl} one of the zeros $(-\mu_r,\la_r)$
of the principal part $P^l(\xi)$ we obtain
\begin{equation} \label{Pl-1theta_k}
 P^{l-1}(-\mu_r,\la_r)=\al_r\prod_{k\neq r}
 \(\la_r\mu_k-\la_k\mu_r\).
\end{equation}
Under the assumptions on the numbers $\al_k$ and the vectors
$(\la_k,\mu_k)$ this yields
$$
P^{l-1}(-\mu_r,\la_r)\in\C\setminus\R.
$$
It follows that $P^l(-\mu_r,\la_r)=0$ and $\Im\
P^{l-1}(-\mu_r,\la_r)\neq 0$ for all $r\in\{1,\dots,l\}$.

(ii) Conversely, assume that the polynomials $P^l(\xi)$ and $\Im\
P^{l-1}(\xi)$ have no common non-trivial real zeros, that is, that
$\Im\ P^{l-1}(-\mu_r,\la_r)\neq 0$ for all $r\in\{1,\dots,l\}$. It
follows from the proof of Theorem~\ref{obschij_vid}, (i) that a
polynomial $P(\xi)$ with principal part~\eqref{P^l} can be
represented in the form~\eqref{P}, where $\deg Q\le l-2$, and the
$\al_k\in\C$ are some numbers. In this case the polynomial
$P^{l-1}(\xi)$ is represented by the same formula~\eqref{Pl}. In
view of the relations $P^{l-1}(-\mu_r,\la_r)\in\C\setminus\R$ and
$\la_r\mu_k-\la_k\mu_r \in\R\setminus\{0\}$,\
$k,r\in\{1,\dots,l\}$,\ $k\neq r$, equality~\eqref{Pl-1theta_k}
implies that $\al_k\in\C\setminus\R$,\; $k\in\{1,\dots,l\}$. Now
the weak coercivity of the operator $P(D)$ in $\WplRTwo$ follows
from Theorem~\ref{obschij_vid}, (ii).
\end{proof}

\begin{remark} \label{rem_apr_est}
(i) In Theorem~\ref{th_16} the condition that the coefficients
$a_\al$ of the polynomial $P^l$ must be real is not restrictive:
if $P^l$ has real zeros, then its coefficients have the form
$a_\al=ca_\al'$, where $c\in\C$ and $a_\al'\in\R$.

(ii) With the use of the resultant $R[f,g]$ of polynomials $f$ and
$g$ the conditions of Theorem~\ref{th_16} can be written as
$R\[P^l,\Im\ P^{l-1}\](\xi)\neq 0$,\ $\xi\in\R^n$. In this form
weak coercivity can be verified without knowing the zeros of the
polynomial $P^l(\xi)$.

(iii) Theorem~\ref{obschij_vid} in combination with
Proposition~\ref{prop_weak_coerc} shows that any strictly
hyperbolic operator of order $l$ in two variables becomes weakly
coercive in $\WplRTwoInf$ after a perturbation by a suitable
operator of order $l-1$. In general, perturbations of order $l-2$
cannot produce this result.

(iv) Operators~\eqref{dve_perem} remain weakly coercive in
$\WplOm$ for any domain $\Om\subset\R^2$ (including bounded
domains), but they do not exhaust the entire set of weakly
coercive operators in $\WplOm$.

(v) Theorem~\ref{obschij_vid} supplements the results
of~\cite{Lit}. More specifically, by~\cite{Lit}, p.~220 the
d'Alembertian $\square:=D_1^2-a^2D_2^2$ is not weakly coercive in
$\overset{\circ}{W}\rule{0pt}{2mm}^2_\iy(\Om)$,\ where $\Om$ is a
bounded domain in $\R^2$. However, a suitable perturbation of
$\square$ by lower-order terms makes it weakly coercive in
$\WpTwoRTwoInf$ and hence in
$\overset{\circ}{W}\rule{0pt}{2mm}^2_\iy(\Om)$.
\end{remark}

\section{Weakly coercive non-elliptic homogeneous systems}

Here we show that the product of an arbitrary elliptic system and
a special weakly coercive system is weakly coercive, but not
elliptic. This is not the case for an arbitrary weakly coercive
system (see Remark~\ref{last_remark}).

We denote by $\wt{\cm}_1=\wt{\cm}_1(\R^n)$ the class of
multipliers satisfying the conditions of Theorem~\ref{th1}.
Following~\cite{Bom}, \pr\,2 we also introduce a partial ordering
in the set of multi-indices $\Z_+^n$: we will write $\al\le\be$ if
$\al_j\le\be_j$ for all $j\in\{1,\dots,n\}$; moreover, $\al<\be$,
if $\al_j<\be_j$ at least for one $j$.

In some cases the following result makes it easier to verify the
assumptions of Theorem~\ref{th1}

\begin{prop} \label{bez_proizv}
Let $\al\in\Z_+^n\setminus\{0\}$ and let $P(\xi)$ be a polynomial
of degree $l$. Suppose that the zero set of $P(\xi)$ lies in a
ball $B_r^n$. Consider the family of functions
\begin{equation}\label{rac_drob}
\Phi_\be(\xi):=\chi(\xi)\frac{\xi^\be}{P(\xi)},\qquad 0<\be \le
\al,   \quad |\al|< l,
    \end{equation}
where $\chi(\xi)\in C_0^\iy(\R^n)$,\ $0\le\chi(\xi)\le 1$, is a
'cutoff' function equal to zero in $B_r^n$ and to one for
$|\xi|\ge r_1>r$. For $|\xi|\ge r_1$ suppose that the following
conditions hold:

\emph{(i)} the functions~\eqref{rac_drob} satisfy
inequality~\eqref{dv1};

\emph{(ii)} the polynomial $P(\xi)$ satisfies the relations
\begin{equation} \label{ineq_new}
 \prod_{j=1}^n \(1+|\xi_j|\)^{\ga_j}
 \left|(D^\ga P)(\xi)\right|
 \le C|P(\xi)|,\qquad \ga=(\ga_1,\dots,\ga_n)\in\Z_2^n,\qquad C>0.
\end{equation}

Then $\Phi_\be\in\wt{\cm_1}$ whenever $\be\in\Z_+^n$,\;
$0<\be\le\al$.
\end{prop}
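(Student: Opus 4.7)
The plan is to verify the two hypotheses of Theorem~\ref{th1} for each $\Phi_\be$, $0<\be\le\al$. Hypothesis~(i) of Theorem~\ref{th1} is assumption~(i) of the proposition verbatim, so only hypothesis~(ii)---a weighted bound on $D^\sigma\Phi_\be$ for $\sigma\in\Z_2^n$---requires work. For $|\xi|<r_1$ the function $\Phi_\be$ is supported in the annulus $\{r<|\xi|<r_1\}$, on which $P$ does not vanish and $\chi$ together with all its derivatives is smooth and uniformly bounded, so the desired estimate is immediate there. All the effort concentrates on $\{|\xi|\ge r_1\}$, where $\chi\equiv 1$ and $\Phi_\be=\xi^\be/P(\xi)$.

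First I would derive from assumption~(ii) of the proposition the estimate
\begin{equation*}
\bigl|D^{\sigma'}(1/P)(\xi)\bigr|\le\frac{C}{|P(\xi)|}\prod_{j\in\N_{\sigma'}}(1+|\xi_j|)^{-1},\qquad\sigma'\in\Z_2^n,
\end{equation*}
by induction on $|\sigma'|$ using $D_j(1/P)=-(D_jP)/P^2$; since $\sigma'\in\Z_2^n$, only multi-indices in $\Z_2^n$ appear in the resulting Fa\`a di Bruno expansion, and inequality~\eqref{ineq_new} applies directly at each step. Next, the Leibniz expansion
\begin{equation*}
D^\sigma\Phi_\be=\sum_{\tau\in\Z_2^n,\ \tau\le\sigma,\ \tau\le\be}c_\tau\,\xi^{\be-\tau}\,D^{\sigma-\tau}(1/P)
\end{equation*}
reduces the problem to bounding the individual summands.

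The key step is the pointwise estimate
\begin{equation*}
\left|\frac{\xi^{\be-\tau}}{P(\xi)}\right|\le C\prod_{j\in\N_\tau}(1+|\xi_j|)^{-1}\prod_{k=1}^n(1+|\xi_k|)^{-\delta},
\end{equation*}
which follows from the algebraic identity
\begin{equation*}
\prod_{j\in\N_\tau}(1+|\xi_j|)\cdot\frac{|\xi^{\be-\tau}|}{|P|}=\sum_{\rho\in\Z_2^n,\ \rho\le\tau}\frac{|\xi^{\be-\tau+\rho}|}{|P|}
\end{equation*}
combined with assumption~(i) applied to each $\Phi_{\be-\tau+\rho}$, all of which lie in the admissible range $0<\be-\tau+\rho\le\al$ whenever $\rho\ne 0$. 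Multiplying this bound by the one from the previous step, and using that $\N_\tau$ and $\N_{\sigma-\tau}$ partition $\N_\sigma$, each Leibniz summand is dominated by $C\prod_{j\in\N_\sigma}(1+|\xi_j|)^{-1}\prod_k(1+|\xi_k|)^{-\delta}$. Multiplying further by the weight from condition~(ii) of Theorem~\ref{th1} leaves, for each $j\in\N_\sigma$, the factor $|\xi_j|^{1-\delta}(1+|\xi_j|^{2\delta})(1+|\xi_j|)^{-1-\delta}$, which is uniformly bounded on $\R$ for any $\delta\in(0,1)$ (verified by inspection on $|\xi_j|\le 1$ and $|\xi_j|\ge 1$ separately).

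The main technical obstacle is the degenerate case $\tau=\be\in\Z_2^n$ (possible only when $\be\in\Z_2^n$ and $\be\le\sigma$): here $\xi^{\be-\tau}=1$ and the reducing identity above includes the term $\rho=0$ of the form $1/|P|$, for which assumption~(i) yields no direct bound. In this case $1/|P|$ must be estimated by an auxiliary argument that uses assumption~(i) for the maximal function $\Phi_\al$ together with the compactness of the zero set of $P$ inside $B_r^n$, to obtain a pointwise bound of the required form on $\{|\xi|\ge r_1\}$. This edge case is where the subtlety of the argument concentrates.
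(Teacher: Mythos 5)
Your proposal is correct in outline and takes a genuinely different route from the paper's. The paper proves the result only for $n=2$ (asserting that the general case is similar) by splitting into regimes according to which $|\xi_j|$ exceed $1$, rewriting \eqref{dv1}, \eqref{dv2}, \eqref{ineq_new} in each regime, and then estimating $D_1\Phi_\be$, $D_2\Phi_\be$, $D_1D_2\Phi_\be$ one at a time via the quotient rule, using the identity $D_1\Phi_\be=\be_1\Phi_{\be'}-\Phi_\be\,(D_1P)/P$ with an ad-hoc shift $\be'$. You instead work uniformly in $n$ without case-splitting: you first extract from \eqref{ineq_new}, by induction, the clean estimate $|D^{\sigma'}(1/P)|\le C|P|^{-1}\prod_{j\in\N_{\sigma'}}(1+|\xi_j|)^{-1}$, then expand $D^{\sigma}\Phi_\be$ by Leibniz, and control $|\xi^{\be-\tau}/P|$ by observing that $\prod_{j\in\N_\tau}(1+|\xi_j|)\cdot|\xi^{\be-\tau}/P|$ is a finite sum of terms $|\Phi_{\be-\tau+\rho}|$, each covered by \eqref{dv1}; the residual weight factor $|\xi_j|^{1-\delta}(1+|\xi_j|^{2\delta})(1+|\xi_j|)^{-1-\delta}$ is bounded directly. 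This argument is more transparent and gives the general-$n$ case that the paper hand-waves. Both proofs, however, share the same delicate spot, which you correctly isolate: when $\tau=\be\in\Z_2^n$, your reduction produces the bare term $1/|P|$, which is outside the family \eqref{rac_drob}. The paper's proof has precisely the analogous degeneracy, since in its case (ii) the asserted bound $|\Phi_{\be'}|\le C/|\xi_2|^\delta$ is used even when $\be'$ collapses to $0$; so you have not created a new gap, only made an existing implicit one explicit. In the paper's downstream applications (Theorem~\ref{th_4.3}) the denominator is bounded below, so the issue disappears; a self-contained fix does require a separate bound on $1/|P|$, which you sketch but do not carry out.
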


\begin{proof} Note first that relations~\eqref{dv1}
and~\eqref{dv2} are met for $|\xi|\le R$, where $R>0$ is
arbitrary, by the continuity of the functions $\Phi_\be$ and their
derivatives. Therefore we shall assume that $|\xi|$ is
sufficiently large (so we do not require the 'cutoff' function
$\chi(\xi)\equiv 1$ for $|\xi|\ge r_1$ in what follows). We also
assume that $0<\be\le\al$ and denote by $C$ various positive
constants.

Consider the case $n=2$ (for $n\ge 2$ the proof is similar).

(i) Assume that $|\xi_1|>1,\ |\xi_2|>1$. Then
relations~\eqref{dv1},~\eqref{dv2} and~\eqref{ineq_new} are
equivalent, respectively, to the following groups of relations:
\begin{gather}
 \label{1'} |\xi_1\xi_2|^\delta|\Phi_\be(\xi)|\le C;\\
 \label{a'} |\xi_1|^{\delta+1}|\xi_2|^\delta|D_1\Phi_\be|\le C,\qquad
  |\xi_1|^\delta|\xi_2|^{\delta+1}|D_2\Phi_\be|\le C,\qquad
 |\xi_1\xi_2|^{\delta+1}|D_1D_2\Phi_\be|\le C;\\
   \label{2a'} |\xi_1\cdot D_1 P|\le C|P(\xi)|,\qquad
  |\xi_2\cdot D_2 P|\le C|P(\xi)|,\qquad
  |\xi_1\xi_2\cdot D_1 D_2 P|\le C|P(\xi)|.
\end{gather}
In view of Theorem~\ref{th1} it suffices to show that~\eqref{1'}
and~\eqref{2a'} imply~\eqref{a'}.

We find an estimate for $D_1\Phi_\be$. We have
$$
  |(D_1\Phi_\be)(\xi)|=
  \left|\frac{\be_1 \xi^\be}{\xi_1 P(\xi)} - \frac{\xi^\be}{P(\xi)}
  \cdot \frac{D_1 P}{P} \right|=
 \left|\frac{\be_1\Phi_\be}{\xi_1}-\Phi_\be\cdot \frac{D_1 P}{P}\right|
   \le
  C \left|\frac{\Phi_\be(\xi)}{\xi_1}\right| \le
  \frac{C}{|\xi_1|^{\delta+1} |\xi_2|^\delta}\ .
$$
We obtain an estimate for $D_2\Phi_\be$ if we interchange $\xi_1$
and $\xi_2$ .

In a similar way we estimate $D_1 D_2 \Phi_\be$. We have
$$
|D_1 D_2 \Phi_\be|=
\left|\frac{\be_1D_2\Phi_\be}{\xi_1}-D_2\Phi_\be\cdot\frac{D_1
P}{P}- \Phi_\be \(\frac{D_1D_2P}{P} -
  \frac{D_1 P}{P}\cdot\frac{D_2 P}{P}\) \right|
 \le \frac{C}{|\xi_1 \xi_2|^{\delta+1}}.
 $$

(ii) Assume that $|\xi_1|\le 1$ and $|\xi_2|>1$. Then
relations~\eqref{dv1},~\eqref{dv2} and~\eqref{ineq_new} are
equivalent, respectively, to the following groups of relations:
\begin{gather}
  \label{1''} |\xi_2|^\delta|\Phi_\be(\xi)|\le C;\\
  \label{a''} |\xi_2|^\delta|D_1\Phi_\be|\le C,\qquad
  |\xi_2|^{\delta+1}|D_2\Phi_\be|\le C,\qquad
 |\xi_2|^{\delta+1}|D_1D_2\Phi_\be|\le C;\\
   \label{2a''} |D_1 P|\le C|P(\xi)|,\qquad
  |\xi_2\cdot D_2 P|\le C|P(\xi)|,\qquad
|\xi_2\cdot D_1 D_2 P|\le C|P(\xi)|,
\end{gather}
We shall show that~\eqref{1''} and~\eqref{2a''} imply~\eqref{a''}.
We set $\be':=(\be_1-1,\be_2)$ for $\be_1>0$ and $\be':=\be$ for
$\be_1=0$. We find estimates for $D_1\Phi_\be$,\ $D_2\Phi_\be$ and
$D_1 D_2\Phi_\be$:
\begin{gather*}
  |(D_1\Phi_\be)(\xi)|=
  \left|\be_1 \Phi_{\be'}(\xi) - \Phi_\be(\xi)
  \cdot \frac{D_1 P}{P} \right| \le
  C \(|\Phi_{\be'}(\xi)|+|\Phi_\be(\xi)|\) \le
  \frac{C}{|\xi_2|^\delta}\ ;\\
|(D_2\Phi_\be)(\xi)|=
  \left|\frac{\be_2 \Phi_\be(\xi)}{\xi_2} - \Phi_\be(\xi)
  \cdot \frac{(D_2 P)(\xi)}{P(\xi)} \right| \le
  C \left|\frac{\Phi_\be(\xi)}{\xi_2}\right| \le
  \frac{C}{|\xi_2|^{\delta+1}}\ ;\\
 |D_1 D_2 \Phi_\be|=\left|\frac{\be_1\be_2\Phi_{\be'}}{\xi_2}
-D_2\Phi_\be\cdot\frac{D_1 P}{P}-\Phi_\be \(\frac{D_2 D_1 P}{P}-
\frac{D_1 P}{P}\cdot\frac{D_2 P}{P}\)\right|\le
\frac{C}{|\xi_2|^{\delta+1}}.
\end{gather*}

(iii) The case of $|\xi_1|>1$,\ $|\xi_2|\le 1$ is considered in a
similar way.

Thus, $\Phi\in\wt{\cm_1}$ by Theorem~\ref{th1}.
\end{proof}

The following theorem describes wide classes of non-elliptic
systems that are weakly coercive in the isotropic space $\WplRn$,\
$p\in[1,\iy]$. More specifically, the condition $n\ge 2N+1$ of
Theorem~\ref{odnorod} fails for these systems.

\begin{theorem}\label{th_4.3}
Let $\Pj$ be an elliptic system of order $l$ and let
\begin{equation}\label{R_pq}
  R_{uv}(D):=(D_u+i)(D_v+i), \qquad  D_k:=-i\frac{\partial}{\partial x_k}.
\end{equation}
Then the system of operators
\begin{equation}\label{Sjpq}
  S_{juv}(D):= P_j(D)R_{uv}(D),\qquad j\in\{1,\dots,N\},\;
  u,\ v\in\{1,\dots,n\},\; u>v,
\end{equation}
is weakly coercive in
$\overset{\circ}{W}\rule{0pt}{2mm}_p^{l+2}(\R^n)$ for
$p\in[1,\iy]$, but not elliptic.
\end{theorem}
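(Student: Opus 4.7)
The non-ellipticity is immediate: the degree-$(l+2)$ principal symbol of $S_{juv}$ equals $P_j^l(\xi)\,\xi_u\xi_v$. At $\xi^0:=(1,0,\dots,0)\in\R^n\setminus\{0\}$, since $u>v\ge 1$ forces $u\ge 2$, we have $\xi^0_u=0$ and therefore $\xi^0_u\xi^0_v=0$, so that the principal symbol of $S_{juv}$ vanishes at $\xi^0$ for every triple $(j,u,v)$. Hence the system is not elliptic.

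For the weak coercivity I would prove that for every multi-index $\al$ with $|\al|\le l+1$ there is an identity
\[
\xi^\al \;=\; \sum_{j=1}^{N}\sum_{u>v} K^\al_{juv}(\xi)\,S_{juv}(\xi) \;+\; K^\al_0(\xi), \qquad \xi\in\R^n,
\]
in which $K^\al_{juv}\in\cm_1(\R^n)$ and $K^\al_0$ is compactly supported and smooth (hence also in $\cm_1$). Since $\cm_1\subset\cm_p$ for every $p\in[1,\iy]$, applying $\F^{-1}$ and taking $L^p$-norms would yield $\|D^\al f\|_p\le C\sum_{j,u>v}\|S_{juv}(D)f\|_p+C\|f\|_p$ for each $f\in C_0^\iy(\R^n)$, and summation over $|\al|\le l+1$ would give the weak coercivity in $\overset{\circ}{W}\rule{0pt}{2mm}^{l+2}_p(\R^n)$.

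Two kinds of multipliers are needed, depending on the support of $\al$. (a) If $\al$ has at least two positive components, I would pick $u>v$ with $\al_u,\al_v\ge 1$, set $\be:=\al-e_u-e_v$ (so $|\be|\le l-1$), and use the identity $D_uD_v=R_{uv}(D)-iD_u-iD_v+1$ to reduce $D^\al f$ to $D^\be R_{uv}(D)f$ plus monomials of strictly smaller total order. Applying the elliptic weak coercivity of $\{P_j\}_1^N$ in $\overset{\circ}{W}\rule{0pt}{2mm}^l_p(\R^n)$ (Theorem~\ref{th2} for $p\in(1,\iy]$, and~\cite{BDLM} for $p=1$) to $g:=R_{uv}(D)f\in C_0^\iy(\R^n)$ yields $\|D^\be g\|_p\le C\sum_j\|S_{juv}(D)f\|_p+C\|R_{uv}(D)f\|_p$, with the remaining lower-order terms absorbed by induction on $|\al|$. (b) If $\al=a\cdot e_k$ is concentrated on a single coordinate with $a\in\{l,l+1\}$, then for each $v\ne k$ I set $(u',v'):=(\max(k,v),\min(k,v))$ and use the multiplier
\[
K^{(a,k)}_{jv}(\xi) \;:=\; \chi(\xi)\,\frac{\xi_k^{\,a}\,\overline{P_j(\xi)\,R_{u'v'}(\xi)}}{\bigl(\sum_{j'}|P_{j'}(\xi)|^2\bigr)(1+\xi_k^2)\bigl(\sum_{v''\ne k}(1+\xi_{v''}^2)\bigr)},
\]
where $\chi$ is a cutoff vanishing near the compact zero set of $\sum_{j'}|P_{j'}|^2$ (compactness by Proposition~\ref{dvei_neodn}). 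A direct check based on the identity $\sum_{v\ne k}|R_{u'v'}|^2=(1+\xi_k^2)\sum_{v''\ne k}(1+\xi_{v''}^2)$ gives $\sum_{j,\,v\ne k}K^{(a,k)}_{jv}\,S_{ju'v'}=\chi\,\xi_k^{a}$.

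The main obstacle is the verification that $K^\al_{juv}\in\cm_1$. Crude sup-norm estimates using $\sum_{j'}|P_{j'}|^2\ge c|\xi|^{2l}$ from ellipticity (Proposition~\ref{dvei_neodn}) and $\sum_{v''\ne k}(1+\xi_{v''}^2)\ge n-1$ give $|K^\al_{juv}(\xi)|\le C$ uniformly for $|\al|\le l+1$, but to promote this to membership in $\cm_1$ I would invoke Theorem~\ref{th1}, checking conditions~\eqref{dv1} and~\eqref{dv2}. Since each $K^\al_{juv}$ is rational in $(\xi,\bar\xi)$, the mixed partials $D^\ga K^\al_{juv}$, $\ga\in\Z_2^n$, expand via the Leibniz and quotient rules into sums of terms each controlled by analogues of inequality~\eqref{ineq_new} of Proposition~\ref{bez_proizv} applied factor by factor to the denominator. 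For the elliptic factor $\sum_{j'}|P_{j'}|^2$ such an inequality follows from its $2l$-homogeneity and the two-sided bound of Proposition~\ref{dvei_neodn}; for the factors $(1+\xi_k^2)$ and $\sum(1+\xi_{v''}^2)$ it is elementary. Together these verifications deliver the hypotheses of Theorem~\ref{th1}, yielding $K^\al_{juv}\in\cm_1$ and completing the proof of weak coercivity.
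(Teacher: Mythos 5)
Your non-ellipticity argument is correct and is essentially the paper's (you take $\xi^0=(1,0,\dots,0)$ while the paper takes $(0,\dots,0,1)$; both work because $u>v$ forces $u\ge 2$). Your multipliers in case (b) are, after the simplification $(\xi_k-i)/(1+\xi_k^2)=1/(\xi_k+i)$, \emph{identical} to the paper's $\Phi_{\alpha jv}$ for $\alpha=ae_k$, so the framework matches. The difference is in the verification step, and that is where the gap lies.

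For $a=l+1$ the function $K^{(l+1,k)}_{jv}$ does \emph{not} satisfy condition~\eqref{dv1} of Theorem~\ref{th1}, so the proposed verification cannot go through. Indeed, take $\xi=\xi_k e_k$ with $\xi_k\to\infty$ and let $c_j$ be the coefficient of $\xi_k^l$ in $P_j$ (some $c_j\ne 0$ by ellipticity). Then the numerator $\xi_k^{l+1}\,\overline{P_j}\,(\xi_k-i)(\xi_v-i)$ grows like $\xi_k^{2l+2}$, and so does the denominator $(\sum|P_{j'}|^2)(1+\xi_k^2)\sum_{v''\ne k}(1+\xi_{v''}^2)\sim(n-1)(\sum|c_{j'}|^2)\xi_k^{2l+2}$; the quotient tends to the nonzero constant $-i\,\overline{c_j}/[(n-1)\sum|c_{j'}|^2]$. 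Consequently $\prod_r(1+|\xi_r|)^\delta\,|K^{(l+1,k)}_{jv}|\sim(1+|\xi_k|)^\delta\cdot\text{const}\to\infty$, so~\eqref{dv1} fails and Theorem~\ref{th1} is inapplicable. The function \emph{is} in $\cm_1$, but for a different reason: the paper factors $\Phi_{\alpha jv}=\frac{\xi_k}{\xi_k+i}\,\Psi_{\beta jv}$ with $|\beta|=l$. The factor $\xi_k/(\xi_k+i)=1-i(\xi_k+i)^{-1}$ is a multiplier by an explicit finite-measure representation (it tends to $1$ at infinity, so it is not of Mikhlin--Lizorkin type and is precisely the piece that breaks~\eqref{dv1}); the remaining $\Psi_{\beta jv}$ still requires a further case split according to whether the $\xi_k$-exponent of $\xi^\beta\overline{P_j}$ reaches $2l$, in which case one must subtract off the asymptotic limit $\Psi'_{\beta jv}$ and treat the remainder. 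You would need to reproduce this two-stage trick (or an equivalent) rather than invoke Theorem~\ref{th1} directly.

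There are also two smaller issues. First, your case (a) as stated is circular for $|\alpha|=2$, $\alpha=e_u+e_v$: the term $\|R_{uv}(D)f\|_p$ on the right contains $\|D_uD_vf\|_p$, which is the quantity you are trying to bound, and Theorem~\ref{th2} applied with $\beta=0$ yields nothing. This can be fixed by passing to a multiplier identity (write $R_{uv}=\sum_j\chi\overline{P_j}(\sum_q|P_q|^2)^{-1}S_{juv}+(1-\chi)R_{uv}$), but the norm-inequality route you describe does not close by itself. Second, the monomials $\xi^\alpha=\xi_k^a$ with $1\le a<l$ are covered by neither case (a) (one positive component) nor case (b) (which you restrict to $a\in\{l,l+1\}$); here one needs the same multiplier identity with $1/R_{uv}=(\xi_u+i)^{-1}(\xi_v+i)^{-1}\in\cm_1$ to pass from the $P_j$ to the $S_{juv}$.
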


\begin{proof}
The system~\eqref{Sjpq} is not elliptic because the system
$S_{juv}^{l+2}(\xi)=\xi_u \xi_v P^l_j(\xi)$ of its
$(l+2)$-principal parts has a common non-trivial zero at
$\xi^0=(0,\dots,0,1)$.

Further, we choose a monomial $D^\al$ such that $0<|\al|\le l+1$.
Since the variables $\xi_1,\dots,\xi_n$ have 'equal weight'
in~\eqref{R_pq} and~\eqref{Sjpq}, we can assume without loss of
generality that $\al_1>0$. We claim that the following more
stronger estimate holds in place of the weak coercivity
inequality:
\begin{equation}\label{Sj1q}
 \|D^\al f\|_{L^p(\R^n)} \le C_1\sum_{j=1}^N
 \sum_{v=2}^n
 \|S_{j1v}(D)f\|_{L^p(\R^n)}+C_2\|f\|_{L^p(\R^n)},\quad f\in
 C_0^\iy(\R^n),
 \end{equation}
where only operators containing $D_1$ are present in the
right-hand side. To prove~\eqref{Sj1q} it suffices to show that
the functions
\begin{equation}\label{Phi}
\Phi_{\al jv}(\xi):=\chi(\xi) \frac{\xi^\al
\ol{S_{j1v}(\xi)}}{\sum_{q=1}^N\sum_{s=2}^n |S_{q1s}(\xi)|^2}=
\chi(\xi)\frac{\xi^\al (\xi_v-i)\ol{P_j(\xi)}}
{(\xi_1+i)\sum_{q=1}^N |P_q(\xi)|^2 \sum_{s=2}^n (\xi_s^2+1)}
 \end{equation}
are multipliers on $L^p(\R^n)$,\ $p\in[1,\iy]$, whenever $|\al|\le
l+1$,\ $j\in\{1,\dots,N\}$,\ $v\in\{2,\dots,n\}$. Here
$\chi(\xi)\in C_0^\iy(\R^n)$,\ $0\le\chi(\xi)\le 1$, is a 'cutoff'
function equal to one for sufficiently large $|\xi|$ and to zero
in a ball $B_r^n$ containing the compact zero set of the elliptic
system $\Pj$ (see Proposition~\ref{dvei_neodn}, (i)). In fact, if
we prove that $\Phi_{\al jv} \in\cm_1$, then by applying the
inverse Fourier transform to the equalities
$$
  \xi^\al \hat f(\xi)=\sum_{j=1}^N \sum_{v=2}^n
  \Phi_{\al jv}(\xi)S_{j1v}(\xi)\hat f(\xi)+\xi^\al
  (1-\chi(\xi))\hat f(\xi),\qquad f\in C_0^\iy(\R^n),
$$
we obtain the desired estimates~\eqref{Sj1q}.

Assume first that $|\al|<l+1$ and that, as mentioned above,
$\al_1>0$. Then all the following rational fractions belong to
$\cm_1$:
$$
  \frac{\xi_1}{\xi_1+i},\qquad
 \chi(\xi)\frac{\xi_1^{\al_1-1}\xi_2^{\al_2}\dots\xi_n^{\al_n}\ol{P_j(\xi)}}%
 {\sum_{q=1}^N |P_q(\xi)|^2},\qquad
  \frac{\xi_v-i}{\sum_{s=2}^n (\xi_s^2+1)}.
$$
In fact, $\xi_1(\xi_1+i)^{-1}=1-i(\xi_1+i)^{-1}\in\cm_1$ by
formula~\eqref{xi1+i}. It is clear that the two remaining
fractions belong to $\cm_1$ by Theorem~\ref{th1} (or
Proposition~\ref{bez_proizv}), because their denominators are
elliptic polynomials of their variables. Finally, since $\cm_1$ is
an algebra, it follows that $\Phi_{\al jv}\in\cm_1$.

Now, consider the case of $|\al|= l+1$. Clearly,
$$
  \Phi_{\al jv}(\xi)=\chi(\xi)\frac{\xi_1}{\xi_1+i} \cdot
   \frac{\xi^\be (\xi_v-i)\ol{P_j(\xi)}}
  {\sum_{q=1}^N |P_q(\xi)|^2 \sum_{s=2}^n (\xi_s^2+1)}
  =:\frac{\xi_1}{\xi_1+i} \Psi_{\be jv}(\xi),
$$
where $\be:=(\al_1-1,\al_2,\dots,\al_n),\; |\be|=l$. As mentioned
above, $\xi_1(\xi_1+i)^{-1}\in\cm_1$. Therefore, it suffices to
show that $\Psi_{\be jv}\in\cm_1$.

Let $\varkappa$ be the exponent of $\xi_1$ in the product
$\xi^\be\ol{P_j(\xi)}$,\ $\varkappa\le 2l$. We consider two cases.

(i) Suppose $\varkappa<2l$. Clearly, the functions $\Psi_{\be jv}$
are sums of functions of the form
\begin{equation}\label{ell}
\Phi_\ga(\xi):=\chi(\xi)\frac{\xi^\ga}{G(\xi)\sum_{s=2}^n
(\xi_s^2+1)},\qquad |\ga|\le 2l+1,\quad \ga_1\le 2l-1,
\end{equation}
where $G(\xi):=\sum_{q=1}^N |P_q(\xi)|^2$ is an elliptic
polynomial of degree $2l$. We will verify the assumptions of
Proposition~\ref{bez_proizv} for functions~\eqref{ell}. First, we
verify~\eqref{dv1}.

By Proposition~\ref{dvei_neodn}, (ii), if $|\xi|$ is large enough,
then
\begin{equation} \label{ellipt_G}
C_1 |\xi|^{2l}\le |G(\xi)|\le C_2|\xi|^{2l},\qquad C_1,C_2>0.
\end{equation}
Now, the inequality between the geometric mean and mean square
yields
\begin{equation}\label{ld0}
 \prod_{j=1}^n \(1+|\xi_j|\)^\delta \le C
 |\xi|,\qquad\text{where}\quad \delta:=1/n,
\end{equation}
for large $|\xi|$. Since $|\ga|\le 2l+1$ and $\ga_1\le 2l-1$, it
follows that $\ga=\ga'+\td\ga$, where $\ga'$ and $\td\ga$ are
multi-indices such that $\ga_1=\ga_1'\le 2l-1$,\ $|\ga'|\le 2l-1$
and $|\td\ga|\le 2$,\ $\td\ga_1=0$. Then
\begin{equation} \label{ld1}
  |\xi^{\ga'}|\le |\xi|^{|\ga'|}\le |\xi|^{2l-1},\qquad
 |\xi^{\td\ga}|\le \sum_{k=2}^n (\xi_k^2 +1)
\end{equation}
for $|\xi|>1$. Multiplying inequalities~\eqref{ld0}
and~\eqref{ld1} and taking~\eqref{ellipt_G} into account, for
large $|\xi|$ we arrive at relation~\eqref{dv1} for the function
$\Phi_\ga(\xi)$.

Leibniz's formula~\eqref{herm} implies
inequalities~\eqref{ineq_new} for the polynomial
$$
G(\xi)\sum_{s=2}^n (1+\xi_s^2)
$$
because they hold for the elliptic polynomial $G(\xi)$
(see~\cite{BDLM}, \pr\,4) and they obviously hold for
$\sum_{s=2}^n (1+\xi_s^2)$.

Thus, $\Phi_\ga\in\wt{\cm_1}$ by Proposition~\ref{bez_proizv} and
hence $\Psi_{\be jv}\in\wt{\cm_1}$.

(ii) Let $\varkappa=2l$. Then $\xi^\be=\xi_1^l$. Also let
$P_j(\xi)=c_j\xi_1^l+\dots$, where the dots stand for a sum of
monomials containing $\xi_1$ with exponent $<l$. Then
$G(\xi)=\sum_{q=1}^n |c_q|^2 \xi_1^{2l}+\dots$, and we have
$\sum_{q=1}^n |c_q|^2\neq 0$. In view of Theorem~\ref{th1}, the
function
$$
  \Psi'_{\be jv}(\xi):=\chi(\xi)
  \frac{\ol{c_j}}{\sum_{q=1}^N |c_q|^2}\cdot
  \frac{\xi_v-i}{\sum_{s=2}^n (\xi_s^2+1)}
$$
is a multiplier on $L^1(\R^n)$,\; $\Psi'_{\be jv} \in \cm_1$.
Moreover, by step (i),
$$
 \Psi_{\be jv}(\xi)-\Psi'_{\be jv}(\xi)=\chi(\xi)
 \frac{\[\xi_1^l(\ol c_j \xi_1^l+\dots)-
\ol c_j\(\sum_{q=1}^N |c_q|^2\)^{-1} G(\xi)\](\xi_v-i)}
   {G(\xi)\sum_{s=2}^n (\xi_s^2+1)}\in\cm_1,
$$
because the factor in the square brackets contains no monomials
with $\xi_1^{2l}$.
\end{proof}

\begin{cor}\label{th_4.1}
Let $P(D)$ be an elliptic operator of order $l$ and let
$R_{uv}(D)$ be operators of the form~\eqref{R_pq}. Then the system
$\{P(D)R_{uv}(D)\}_{u>v}$ is weakly coercive in
$\overset{\circ}{W}\rule{0pt}{2mm}_p^{l+2}(\R^n)$,\ $p\in[1,\iy]$,
but is not elliptic.
\end{cor}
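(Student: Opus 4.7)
The plan is to obtain this as the immediate specialization of Theorem~\ref{th_4.3} to $N=1$. Setting $P_1(D) := P(D)$, the single-operator family $\{P_1\}_1^1$ is an elliptic ``system'' of order $l$ in the sense of that theorem, and the associated product system~\eqref{Sjpq} becomes $\{S_{1uv}(D)\}_{u>v}=\{P(D)R_{uv}(D)\}_{u>v}$, which is exactly the system named in the statement. Theorem~\ref{th_4.3} then yields both conclusions at once.

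Concretely, weak coercivity in $\overset{\circ}{W}\rule{0pt}{2mm}_p^{l+2}(\R^n)$ for every $p\in[1,\iy]$ is directly the content of the theorem, and non-ellipticity follows from the observation (already made inside its proof) that each $(l+2)$-principal symbol $S_{1uv}^{l+2}(\xi) = \xi_u\xi_v P^l(\xi)$ vanishes at the common non-trivial point $\xi^0 = (0,\dots,0,1)$, so the family $\{P(D)R_{uv}(D)\}_{u>v}$ has a common non-trivial real zero of its principal symbols and therefore cannot be elliptic.

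There is essentially no obstacle here: Theorem~\ref{th_4.3} was proved uniformly for all $N\ge 1$, and none of its multiplier-theoretic arguments needs to be redone. The only point worth flagging is a notational one, namely that the pairs $(u,v)$ with $u>v$ index the family in the corollary in the same way as in~\eqref{Sjpq} after the trivial index $j=1$ is suppressed. If one preferred to avoid invoking Theorem~\ref{th_4.3}, the alternative would be to rerun its argument in this simpler setting by checking that the functions
$$
\Phi_{\alpha v}(\xi) := \chi(\xi)\,\frac{\xi^\alpha(\xi_v-i)\overline{P(\xi)}}{(\xi_1+i)\,|P(\xi)|^2\sum_{s=2}^n(\xi_s^2+1)},\qquad 0<|\alpha|\le l+1,\quad \alpha_1>0,
$$
lie in $\cm_1(\R^n)$ via Theorem~\ref{th1} and Proposition~\ref{bez_proizv}, exactly as in steps (i) and (ii) of the proof of Theorem~\ref{th_4.3}; but the direct specialization is clearly the most economical route.
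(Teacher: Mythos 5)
Your proposal is exactly right and matches the paper's intent: Corollary~\ref{th_4.1} is stated immediately after Theorem~\ref{th_4.3} with no separate proof, precisely because it is the $N=1$ specialization, and your observation about the common zero $\xi^0=(0,\dots,0,1)$ of the principal symbols $\xi_u\xi_v P^l(\xi)$ is the same non-ellipticity argument used in that proof.
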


Next we show that for $p=\iy$ the number of operators $R_{uv}(D)$
in Theorem~\ref{th_4.3} cannot be reduced even if $N=1$.

\begin{prop} \label{N=1}
Suppose that $n\ge 3$, let $P(D)$ be an elliptic operator of order
$l$ and $R_{uv}(D)$ be operators of the form~\eqref{R_pq}. Then if
an arbitrary operator is removed from the system
$\{P(D)R_{uv}(D)\}_{u>v}$ the rest is no longer weakly coercive in
$\overset{\circ}{W}\rule{0pt}{2mm}_\iy^{l+2}(\R^n)$.
\end{prop}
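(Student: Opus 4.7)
The plan is to assume the contrary and restrict the hypothetical a priori estimate to a two-dimensional coordinate subspace, where Proposition~\ref{mal_neodn} forces a polynomial identity that contradicts the ellipticity of $P$. After a permutation of the variables $x_1,\dots,x_n$ (which replaces $P$ by another elliptic polynomial of the same order $l$), I may assume that the removed operator is $P(D)R_{21}(D)=P(D)(D_2+i)(D_1+i)$. If the truncated system $\{P(D)R_{uv}(D):u>v,\ (u,v)\neq(2,1)\}$ is weakly coercive in $\overset{\circ}{W}\rule{0pt}{2mm}_\iy^{l+2}(\R^n)$, then in particular the monomial $D_1^{l+1}$, being of order $l+1$, is subordinate to this system.

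Next I apply Proposition~\ref{suzh} with $E=\Span\{e_1,e_2\}$; since the coefficients are constant, this yields a genuine $L^\iy(\R^2)$ estimate on $C_0^\iy(\R^2)$. Evaluating the symbols of the remaining $R_{uv}$ at $\xi_3=\dots=\xi_n=0$ gives $i(D_1+i)$ for $(u,v)=(1,j),\ j\ge 3$, gives $i(D_2+i)$ for $(u,v)=(2,j),\ j\ge 3$, and gives $-1$ for $u>v\ge 3$; writing $\td P(\xi_1,\xi_2):=P(\xi_1,\xi_2,0,\dots,0)$, the restricted estimate therefore reads
$$
\|D_1^{l+1}f\|_{\iy}\le C\bigl(\|\td P(D)(D_1+i)f\|_{\iy}+\|\td P(D)(D_2+i)f\|_{\iy}+\|\td P(D)f\|_{\iy}\bigr)+C\|f\|_{\iy}
$$
for every $f\in C_0^\iy(\R^2)$. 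I then invoke the constant-coefficient form of Proposition~\ref{mal_neodn} together with Remark~\ref{rem_mal_neodn}(i) with isotropic weight $l+1$; the $(l+1)$-principal parts of the three operators on the right are $\td P^l(\xi)\xi_1$, $\td P^l(\xi)\xi_2$ and $0$ respectively (the last since $\deg\td P=l<l+1$), so there exist constants $\la_1,\la_2\in\C$ with
$$
\xi_1^{l+1}=(\la_1\xi_1+\la_2\xi_2)\,\td P^l(\xi_1,\xi_2).
$$

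It remains to contradict this identity. By unique factorization in $\C[\xi_1,\xi_2]$, both factors on the right must be monomials in $\xi_1$ (up to scalar), so $\td P^l(\xi_1,\xi_2)=c\xi_1^l$ for some $c\in\C\setminus\{0\}$; but ellipticity of $P$ gives $\td P^l(0,1)=P^l(0,1,0,\dots,0)\neq 0$, ruling this out. The substantive idea is the joint choice of the test monomial $D_1^{l+1}$ and of the restriction subspace spanned by the two variables that occurred in the removed factor $R_{21}$: after this restriction every remaining symbol of top degree $l+1$ inherits the common factor $\td P$, so Proposition~\ref{mal_neodn} yields a polynomial identity rigid enough to be broken directly by ellipticity.
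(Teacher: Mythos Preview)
Your proof is correct and follows essentially the same route as the paper's: restrict the estimate for $D_1^{l+1}$ to the coordinate plane $\Span\{e_1,e_2\}$ associated with the removed factor, apply Proposition~\ref{mal_neodn} at isotropic order $l+1$ to obtain $\xi_1^{l+1}=(\la_1\xi_1+\la_2\xi_2)\,\td P^l(\xi)$, and contradict this via unique factorization and the ellipticity of $\td P^l$. One cosmetic slip: since the system is indexed over $u>v$, the pairs that restrict to $i(D_1+i)$ and $i(D_2+i)$ are $(u,v)=(j,1)$ and $(u,v)=(j,2)$ with $j\ge 3$, not $(1,j)$ and $(2,j)$; this does not affect the argument.
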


\begin{proof}
Without loss of generality we may assume that the operator
$P(D)R_{12}(D)$ is removed from the system. Assume that the system
$$
\{P(D)R_{uv}(D)\}_{u>v,\ u+v>3}
$$
remains weakly coercive in
$\overset{\circ}{W}\rule{0pt}{2mm}_\iy^{l+2}(\R^n)$. Consider the
'restricted' system
$$
\{P(D)R_{uv}(D)\lceil E\}_{u>v,\ u+v>3},
$$
where $E:=\Span\{\xi_1,\xi_2\}$. It has order $l+1$ and by
Corollary~\ref{suzhenie} is weakly coercive in
$\overset{\circ}{W}\rule{0pt}{2mm}_\iy^{l+2}(\R^n)$. In
particular, this system estimates the operator $D_1^{l+1}$. By
Proposition~\ref{mal_neodn} we obtain
$$
\xi_1^{l+1}=\sum_{u>v,\ u+v>3} \la_{uv}\(P(\xi)R_{uv}(\xi)\lceil
E\)^{l+1}(\xi)= \(P^l(\xi)\lceil E\) \[i\sum_{u>v,\ u+v>3}
\la_{u1} \xi_1+ \la_{u2} \xi_2\].
$$
The polynomial $P^l(\xi)\lceil E$  is elliptic and therefore is
not a multiple of $\xi_1$. Hence $\xi_1^{l+1}$ must divide the
polynomial in the square brackets. However, this contradicts the
relation $l\ge 1$.
\end{proof}

\begin{remark} \label{last_remark}
Proposition~\ref{N=1} shows that Corollary~\ref{cor_10} does not
hold for $N>1$ in the general case. For example, the system
$\{(D_1+i)(D_2+i), (D_3+i)(D_4+i)\}$ is weakly coercive in
$\overset{\circ}{W}\rule{0pt}{2mm}_\iy^2(\R^4)$, but the system
$$
 (D_1^2+\dots+D_4^2)(D_1+i)(D_2+i),\qquad
(D_1^2+\dots+D_4^2)(D_3+i)(D_4+i)
$$
is not weakly coercive in
$\overset{\circ}{W}\rule{0pt}{2mm}_\iy^4(\R^4)$.
\end{remark}



\renewcommand{\refname}{Bibliography}

\bigskip

\bigskip

\noindent \textbf{D.~V.~Limanskii}

\noindent Donetsk National University, Ukraine

\noindent \textit{E-mail:} lim8@telenet.dn.ua

\bigskip

\noindent\textbf{M. M. Malamud}

\noindent Institute of Applied Mathematics and Mechanics of NAS of
Ukraine, Donetsk

\noindent \textit{E-mail:} mmm@telenet.dn.ua

\end{document}